\theoremstyle{plain}
\else \newtheorem{theorem}{Theorem} \fi
\else \newtheorem{proposition}[theorem]{Proposition} \fi
\else \newtheorem{corollary}[theorem]{Corollary} \fi
\else \newtheorem{question}[theorem]{Question} \fi
\else \newtheorem{observation}[theorem]{Observation} \fi
\newtheorem*{theorem*}{Theorem}
\newtheorem*{lemma*}{Lemma}
\newtheorem*{proposition*}{Proposition}
\newtheorem*{conjecture*}{Conjecture}
\newtheorem*{question*}{Question}
\newtheorem*{definition*}{Definition}
\theoremstyle{definition}
\else \newtheorem{definition}[theorem]{Definition} \fi
\theoremstyle{remark}
\else \newtheorem{remark}[theorem]{Remark} \fi
\newtheorem*{remark*}{Remark}
\newcounter{my_enumerate_counter}
\newcommand\comment[1]{}
\newcommand\Zsf{\mathsf{Z}}
\newcommand\Afrak{\mathfrak{A}}
\newcommand\Bfrak{\mathfrak{B}}
\newcommand\Tfrak{\mathfrak{T}}
\newcommand\Pcal{\mathcal{P}}
\newcommand\Xcal{\mathcal{X}}
\newcommand\Ycal{\mathcal{Y}}
\newcommand\Qbb{\mathbb{Q}}
\newcommand{\dom}{\operatorname{dom}}
  \newcommand{\alt}{\operatorname{alt}}
\newcommand{\crit}{\operatorname{crit}}
\newcommand\axiom{\mathsf}
\newcommand\ZF{\axiom{ZF}}
\newcommand\ZFC{\axiom{ZFC}}
\newcommand\ZFm{\axiom{ZF}^-}
\newcommand\GB{\axiom{GB}}
\newcommand\GBC{\axiom{GBC}}
\newcommand\ETR{\axiom{ETR}}
\newcommand\MK{\axiom{MK}}
\newcommand\AC{\axiom{AC}}
\newcommand\Sfour{\axiom{S4}}
\newcommand\Sfourtwo{\axiom{S4.2}}
\newcommand\Sfourthree{\axiom{S4.3}}
\newcommand\class{\mathrm}
\newcommand\HOD{\class{HOD}}
\newcommand\Ord{\class{Ord}}
\newcommand\Add{\class{Add}}
\newcommand{\seq}[1]{\left\langle #1 \right\rangle}
\renewcommand{\epsilon}{\varepsilon}
\newcommand\card[1]{\left\lvert #1 \right\rvert}
\newcommand\rest{\upharpoonright}
\newcommand\powerset{\Pcal}
\newcommand{\godel}[1]{\ulcorner#1\urcorner}
\newcommand\Con{\operatorname{Con}}
\newcommand\Def{\operatorname{Def}}
  \newcommand\Form{\axiom{Form}}
\newcommand{\impl}{\Rightarrow}
\renewcommand{\iff}{\Leftrightarrow}
\renewcommand{\phi}{\varphi}
\DeclareMathOperator{\possible}{\text{\tikz[scale=.6ex/1cm,baseline=-.6ex,rotate=45,line width=.1ex]{\draw (-1,-1) rectangle (1,1);}}}
\DeclareMathOperator{\necessary}{\text{\tikz[scale=.6ex/1cm,baseline=-.6ex,line width=.1ex]{\draw (-1,-1) rectangle (1,1);}}}
\newcommand{\Tr}{\mathrm{Tr}}
\renewenvironment{description}[1][0pt]
  {\list{}{\labelwidth=0pt \leftmargin=#1
   }}
  {\endlist}
\title{Varieties of Class-Theoretic Potentialism}
\author{Neil Barton}
\address[Neil Barton]{IFIKK, Universitetet i Oslo\\ Postboks 1020, Blindern \\
0315, Oslo\\
Norway}
\email{neil.barton@uni-konstanz.de}
\urladdr{https://neilbarton.net/} 
\author{Kameryn J. Williams}
\address[Kameryn J. Williams]{
Sam Houston State University \\
Department of Mathematics and Statistics \\
Box 2206 \\
Huntsville, TX 77341-2206 \\
USA}
\email{kameryn.j.w@shsu.edu}
\urladdr{http://kamerynjw.net}
\thanks{Order of authors is alphabetical. We would like to thank Carolin Antos, Julian Bacharach, Laura Crosilla, John Baldwin, Salvatore Florio, \O{}ystein Linnebo, Beau Madison Mount, Naomi Osorio-Kupferblum, Martin Pleitz,  Sam Roberts, and audiences in Konstanz and New York for comments and discussion. The first author is very grateful for the support of the VolkswagenStiftung via the project \textit{Forcing: Conceptual Change in the Foundations of Mathematics} and Norges forskningsr\r{a}det (the Research Council of Norway) for their support via the project \textit{Infinity and Intensionality: Towards A New Synthesis} (no. 314435).}
\date{29 July 2021}
\begin{document}

\maketitle

\begin{abstract}
 We explain and explore class-theoretic potentialism---the view that one can always individuate more classes over a set-theoretic universe. We examine some motivations for class-theoretic potentialism, before proving some results concerning the relevant potentialist systems (in particular exhibiting failures of the $\mathsf{.2}$ and $\mathsf{.3}$ axioms). We then discuss the significance of these results for the different kinds of class-theoretic potentialist.
\end{abstract}

\section*{Introduction}

In this paper we examine a new kind of \textit{potentialism} in set theory. From the off, let's state the difference between (set-theoretic) \textit{actualism} and \textit{potentialism}:

\begin{description}
\item[Set-Theoretic Actualism] There is a maximal completed universe of sets.
\end{description}

One natural such position is universist set-theoretic actualism; the view that there is \textit{exactly one} such universe. However, this is not necessary for actualism; one could have multiple distinct incomparable universes, each of which cannot be extended.\footnote{For the arguments of this paper, nothing much hangs on the matter.} Whatever one's preference, actualism contrasts sharply with:

\begin{description}
\item[Set-Theoretic Potentialism] The universe of sets is not a completed totality, but rather unfolds gradually as parts either come into existence or become accessible to us.\footnote{This statement closely mirrors \cite[p. 1.]{HamkinsLinnebo2018a}}
\end{description}

A common way of making set-theoretic potentialism mathematically precise is by viewing this `gradual unfolding' as describing a space (or spaces) of possible worlds. Modal operators are then often introduced, and we are able to ask several kinds of questions, including (i) what holds non-modally at particular worlds, (ii)  what modal principles certain worlds satisfy, and (iii) what the modal logic of different accessibility relations are. For this reason, discussion of potentialism often focuses on the nature of these set-theoretic worlds. For example, we might (inspired by \cite{Zermelo1930a}) view the worlds as the study of ever larger $V_\kappa$ for $\kappa$ inaccessible, with accessibility being coextensive with the subset relation. Another species of set-theoretic potentialism is to have the worlds be all those that can be obtained by set forcing (and moving to ground models) from some starting universe, and have one world $V$ be accessible from another $V'$ just in case one can force from $V'$ to obtain $V$. There have been several results in this field, including isolating the modal logic of forcing \cite{HamkinsLoewe2008a} and the study of potentialist maximality principles \cite{Hamkins2003a}, \cite{HamkinsLinnebo2018a}. One key question (that we will deal with in detail later in this article) concerns whether the modal axioms $\mathsf{.2}$ and $\mathsf{.3}$ are satisfied in addition to $\Sfour$ in the relevant potentialist systems.\footnote{It should be noted that $\mathsf{S4}$ trivially holds in any potentialist system. See \S\ref{modallogics}, which also has definitions of all these modal axioms.} 
$\mathsf{.3}$ indicates a kind of `inevitability' or `linearity' to how the worlds unfold, and $\mathsf{.2}$ indicates a form of `convergence' present on the frame. Moreover mirroring theorems (which allow us to move between potentialist and non-potentialist theories via a natural translation) are known to hold on systems containing $\mathsf{S4.2}$. This has lead some authors (e.g. \cite[p. 33]{Hamkins2018a}) to claim that the convergent forms of potentialism (i.e. those with modal logic at least $\mathsf{S4.2}$) are `implicitly actualist'. Whatever one thinks of these specific claims, it is clear that $\mathsf{.2}$ and $\mathsf{.3}$ represent clear dividing lines between different potentialist systems.

This greater understanding of set-theoretic potentialism has occurred alongside an explosion in the study of \emph{class theory} (also called \emph{second-order set theory}).
These theories introduce a new kind of variable to range over classes as well as sets, which are then governed by theories such as $\GB$, $\MK$, and their variants. Previously the key debate to be settled was whether or not the comprehension axiom for classes should be fully \textit{impredicative} or rather whether only \textit{predicative} comprehension was licensed by our conception of classes (see, for example, \cite{Uzquiano2003a}). However, recent literature has investigated an entire space of possible class theories with varying consequences and consistency strengths, analagous to the study of subsystems of second-order arithmetic. The dimension in which they can vary is not just along the axis of strength of comprehension. For example, class-theoretic versions of the axiom of choice are independent even of full impredicative $\MK$ (see \cite{GHK2019}). 

These developments suggest that the various kinds of set-theoretic potentialism are not the whole story. Instead, we might study the following kind of potentialism:

\begin{description}
\item[Class-Theoretic Potentialism] The classes of the universe do not constitute a completed totality, but rather unfold gradually as more classes either come into existence or become accessible to us.
\end{description}

One need not be a set-theoretic potentialist if one is a class-theoretic potentialist. One can perfectly well have the classes over a model change whilst the sets remain fixed, say if one were a set-theoretic actualist. In this paper, we'll look at varieties of class-theoretic potentialism under both set-theoretic actualism and set-theoretic potentialism. 
We will argue for the following claims:

\begin{enumerate}
\item Class-theoretic potentialism can be motivated on the basis of several different philosophical conceptions of classes.
\item Class potentialist systems can exhibit a variety of modal structures, with some validating $\Sfourthree$ and $\Sfourtwo$, while others will fail to validate $\mathsf{.2}$ or $\mathsf{.3}$.
\item The various motivations for class-theoretic potentialism suggest different ways of modelling the position and distinct pictures of the modal behaviour of classes.
\end{enumerate}


Here's the plan: In \S\ref{motivations} we'll outline some philosophical positions regarding classes that can be used to motivate class-theoretic potentialism. We'll divide these into two broad kinds: Bottom-up approaches start with some \textit{fixed stock} of classes and then \textit{individuate} new classes over these, whereas top-down approaches see class-theoretic potentialism as arising from \textit{referential indeterminacy} and the ways we can \textit{interrelate} sharpenings of the ranges of the class-theoretic variables. \S\S\ref{systems}--\ref{modallogics} set up the key mathematical notions we shall use to examine these views, namely \textit{potentialist systems} (structures that formalise the notion of worlds and accessibility between them) and the \textit{modal logics} and \textit{axioms} they satisfy. \S\ref{truth-potentialism} proves some results about some potentialist systems, in particular showing that for weak theories below the level of $\GB + \ETR$, given suitable assumptions we can exhibit failures of the $\mathsf{.3}$ (Theorem \ref{thm:s42}) and $\mathsf{.2}$ axioms (Theorem \ref{thm:killing-truth}), showing that some systems have non-inevitability and radical branching. 
We'll then discuss some implications for bottom-up approaches to classes (\S\ref{bottom-up}), arguing that whilst there are good motivations for handling truth predicates, global choice is problematic in this context, and care is required in setting up the `right' model theory. We then (\S\ref{top-down}) examine top-down approaches, arguing that our results are indicative of more natural cases of radical branching than is normally seen in many potentialist contexts.
\S\ref{conclusion} provides some concluding remarks and identifies several open questions and directions for future research.

\section{Motivating Class-Theoretic Potentialism}\label{motivations}

A mathematically popular and expressively parsimonious conception of classes is to regard talk of classes as merely shorthand for certain formulas holding within the universe. So, for instance, ``$x \in \Ord$'' can be rendered as ``$x$ is a transitive set linearly ordered by $\in$'', or for a more complicated class such as a proper-class-sized embedding $j$, the formula ``$j(x) = y$'' can be rendered as some formula $\phi(x,y)$ only referring to sets, possibly involving parameters. A disadvantage of this view is that it appears to trivialise various mathematical theorems under their natural interpretation, such as \cite{Kunen1971a}'s result that there is no nontrivial $j: V \to V$ and the work by  \cite{VickersWelch2001a} on embeddings from inner models to the universe.\footnote{Briefly: if a nontrivial $j : V \to V$ is definable without parameters, then so is its critical point $\crit j$, the least ordinal moved by $j$. But elementary embeddings must fix all sets definable without parameters, so we would get that $j(\crit j) = \crit j$, contrary to the definition of critical point. A small extra argument gets that we further cannot have $j$ definable with (set) parameters, by looking at such $j$ with minimal critical point (for details see \cite{Suzuki1999a}). A similar argument works for consideration of $j: M \to V$ (as in \cite{VickersWelch2001a}).

The point that Kunen's Theorem appears to be trivialised when all embeddings are viewed as definable was communicated to us by
Sam Roberts 
and is raised in \cite{HamkinsKirmayerPerlmutter2012a}, \cite{Fujimoto2019a} (esp. \S{}III), and \cite{Barton2021f} (esp. \S{}2.2). Of course there is much to say here, it is not as though the definibalist about classes \textit{cannot} give a nontrivial interpretation of these theorems (for example Kunen's Theorem can be interpreted as showing that there is no $j: V_{\lambda +2} \to V_{\lambda +2}$) but the point remains that a non-definabilist has a nice interpretation of the theorem that the definabilist lacks.}   
Another problem point is that some recent work in class theory, such as work on the strength of the class forcing theorem,\footnote{See \cite{HolyKrapfLuckeNjegomirSchlicht2016a}, \cite{HolyKrapfSchlicht2018a}, and \cite{GHHSW2020}.}
makes necessary use of undefinable classes such as truth predicates.
For this reason, we'll set this interpretation to one side. 

Instead, in this section we'll outline the links between class-theoretic potentialism and a variety of conceptions of classes. Our aim is not to conclusively argue for class-theoretic potentialism or exhaustively survey the views that might support it. Rather we aim to show that it is a viable position (philosophically speaking) and fits nicely with a several different positions, and indeed is even tenable if one is a set-theoretic actualist. We'll divide these into \textit{bottom-up} and \textit{top-down} approaches. The former roughly correspond to  processes of \textit{individuation} and the latter to considerations regarding \textit{indeterminacy of reference}. At this stage, we'll keep things relatively informal, before discussing the details of each proposal once the mathematical machinery is in place.



\subsection{Bottom-up approaches}

The key feature of a bottom-up approach to classes is that one begins with some antecedently specified classes (e.g. the definable ones) and then builds up the classes by forming new classes from old. 

\subsubsection{Liberal Predicativism.} The first interpretation we'll look at is derived from the work of Parsons (e.g. \cite{Parsons1974a}) with subsequent development by Fujimoto (e.g. \cite{Fujimoto2019a}) and views classes as \textit{predicate extensions}:

\begin{description}
\item[Class Predicativism] Classes are extensions of predicates.\footnote{We should note that this liberalised sense of predicativism contrasts somewhat with the usual sense of the term as developed historically by Poincar\'{e}, Russell, and Weyl. Thanks to 
Laura Crosilla 
for pointing out this issue.}
\end{description}

Once this perspective has been taken, class-theoretic potentialism becomes a natural position. Simply put: Because classes are given by \textit{language} we might think that for any language there is another that nontrivially extends it. In particular, if we think that we can always extend by a truth predicate for any given language, we will obtain an infinite hierarchy of languages. This is borne out in the way that both Parsons and Fujimoto express their position. Take the following illustrative passage from Parsons:

\begin{quote} 
...we do not have an independent understanding of what predicates or abstracts denote, or what class or second-order variables range over. It follows that ``all extensions...'' will, unless set-theoretic notions are imported, only mean ``the extensions of all possible predicates''. And it seems evident that the ``totality'' of possible predicates is irremediably potential...
\cite[p. 8]{Parsons1974a}
\end{quote}


Similar sentiments are available in Fujimoto, who argues that predicates can be ``admissibly introduced'' and agrees with Parsons that there is no determinate totality of admissible predicates \cite[pp. 212, 214--215]{Fujimoto2019a}. 
 
Both Parsons' and Fujimoto's words suggest potentialist readings. However, they each opt for non-modal theories. Parsons uses a theory of classes and satisfaction and Fujimoto adopts $\GB$ augmented with a class-theoretic principle satisfying a version of the $\mathsf{KF}$ truth-theoretic axioms.\footnote{See \cite[p. 223]{Fujimoto2019a} for details.}

Given that both Parsons and Fujimoto think there is no definite collection of all predicates, it is natural to consider a modal class-theoretic potentialist framework in ascertaining the prospects for class predicativism. Beginning with some fixed language, we come to individuate new classes by adding predicates for them into our language. The classes for them are thus ``irremediably potential'', much like the stock of predicates we may admissibly introduce.

\subsubsection{Property Potentialism\footnote{We are grateful to 
\O{}ystein Linnebo
for suggesting this interpretation of classes as leading to class-theoretic potentialism and Sam Roberts for several helpful discussions here.}}

A somewhat similar approach is to view classes as arising from {\it property application} relation.
This kind of view, when taken in unrestricted generality, leads quickly to semantic paradoxes 
 via the consideration of a property $P$ that applies to all and only the non-self-applicable things ($P$ is then self-applicable iff it is not self-applicable, exactly as in Russell's Paradox or the paradox of heterologicality). A restriction is therefore needed, and one suggestion, made by both \cite{Linnebo2006a} and \cite{Fine2005b}, is that the application relation for properties is successively individuated.
Linnebo individuates properties along the ordinals, and thereby uses a length $\Ord$ iteration, while Fine considers longer iterations. This successive individuation of the application relation can then lead to a version of class-theoretic potentialism by obtaining different domains of classes by letting each domain be those classes that are co-extensional with properties at some stage of the iteration.\footnote{Whilst \cite{Fine2005b} does not expressly use property-theoretic language he uses the same picture of successively specifying an application relation, and so we include them together. Linnebo's theory and Fine's theory are also similar from a mathematical perspective. The strength of these theories has recently been examined by Roberts (in \cite{RobertsProperties}). These property theories are provably consistent in $\GB$ +  $\Pi^1_1$-Comprehension. More strength can be obtained by the addition of a reflection principle for properties in this context.
Moreover, some of these property theories are mutually interpretable with $\ZFC$ with the addition for $\Ord$-iterated truth predicates (see \cite{RobertsTruth}).}

Whilst both Fine and Linnebo use a non-modal framework for properties (within which we can interpret certain fragments of class theory) their philosophical claims admit a potentialist interpretation. Talk of successively individuating the application relation (and the new classes that are available each time we do) can be thought of in a potentialist manner. Indeed, one can see this potentialism concerning the classes that are the extensions of properties in the sets as being given formal codification in a stage-theoretic version of Linnebo's property theory given by \cite{RobertsProperties}. Philosophically speaking though, the way that the two constructions are phrased also suggest this interpretation. For example, Linnebo writes:

\begin{quote}
We begin by individuating some class of set-theoretic properties. For concreteness, assume we individuate those set-theoretic properties definable...allowing for parameters referring to pure sets. Now we want to use the set-theoretic properties we have just individuated to individuate more properties. 
\cite[p. 173]{Linnebo2006a}
\end{quote}

Fine expresses a similar sentiment, in the context of contrasting his approach with the traditional cumulative hierarchy picture:

\begin{quote}

...Given an understanding of the ontology of classes, we successively carve out extensions of the membership predicate by using conditions on the domain of classes to specify which further membership relationships should obtain. \cite[p. 547]{Fine2005b}
\end{quote}

Given this idea of successively individuating the application relation, there will be new classes appearing as the application relation  progressively individuates certain properties as holding of more and more sets. For instance as we move through the first few levels of individuation, we will individuate a truth-predicate for the language of $\ZFC$ (after the first stage), and then a truth predicate for the expanded language, and so on.
\smallskip

That concludes our introduction of bottom-up approaches to class-theoretic potentialism. There may be others, but this is not so important for our purposes---we just want to motivate consideration of the view, not provide a comprehensive description of its possible motivations.

To review, the key facets of bottom-up approaches are:

\begin{description}
\item[Initial World] We obtain classes \textit{beginning} with some \textit{initially} specified classes and then...
\item[Individuation] ... we \textit{individuate} new classes over the existing classes.
\end{description}

We will save some further discussion of \textbf{Initial World} and \textbf{Individuation} for when we have some more mathematical tools in play. For now, let's move on to \textit{top-down} approaches.

\subsection{Top-down approaches}

Instead of \textit{starting} with some given collection of classes and iterating a process of individuation, we might instead view potentialism as arising out of \textit{referential indeterminacy}. This is the core approach of \textit{top-down} views: We state some conditions we would like domains of classes to satisfy, but it may be that there is no single domain that is thereby referred to. We can then take class-theoretic potentialism to be telling us how we may move around within these domains that satisfy our basic class-theoretic principles. More concretely, we may see the following views as motivating class-theoretic potentialism.

\subsubsection{Multiverse approaches to class-theoretic potentialism}

The first is relatively simple in nature---we may view class-theoretic potentialism as being motivated by garden-variety set-theoretic potentialism. If one thinks that any universe of set theory appears as a set in a larger universe (i.e. for universe $V$ there is another universe $V'$ such that $V \in V'$) \textit{and} that any universe can be extended by set forcing,\footnote{Views of this kind include \cite{Hamkins2012a}, \cite{ArrigoniFriedman2013a}, and \cite{Scambler2021a}.} then class-theoretic potentialism considers multiversally-interesting set-theoretic structures. For example given a universe $V$, we can always make $V$ countable by moving to a universe $V'$ in which $V$ appears as a set, and then forcing over $V'$  to add a bijection $V \to \omega$ (call this universe $V'[G]$), via the usual collapse forcing to make a set countable. Within $V'[G]$, we can consider class-theoretic potentialist systems consisting of $V$ equipped with different possible collections of classes (to be discussed in greater detail in \S\ref{systems}).

If one thinks that the concept of \textit{arbitrary set} is indeterminate (and holds a multiversism on these grounds), one is likely to hold also that our concept of \textit{class} is also indeterminate. Thus, even if the set-multiversist fixes some universe $V$ as a starting position (within the multiverse), it is unlikely to be determinate exactly what classes exist over $V$. We can then construe reference to all classes of $V$ as referentially indeterminate and yielding a potentialist system of classes.

\subsubsection{Plurals and Potentialism}

Plural resources have been used to interpret proper class talk (see, for example, \cite{Uzquiano2003a}).
Often the ranges of plural variables are taken to be determinate (e.g. \cite{Hossack2000a}, \cite{Uzquiano2003a}). However, this view has recently been challenged by the work of Florio and Linnebo 
\sloppy(in \cite{FlorioLinnebo2016a})
who show that there are versions of Henkin semantics for plural logic, and argue that this calls into question the determinacy of plural quantification. 

It is out of the scope of this paper to settle whether or not one should accept that plural resources are in fact indeterminate. Our focus here is on examining a range of possible views that might motivate the more general idea of class-theoretic potentialism, rather than trying to settle this tricky matter. However, \textit{if} one does accept that such resources are indeterminate, one might motivate a class-theoretic potentialism via the plural interpretation of classes. If we hold that there is referential indeterminacy in the ranges of the plural variables, and we're interpreting classes as pluralities over our fixed universe, then the referential indeterminacy concerning plurals transfers immediately to referential indeterminacy about classes. Once we have this indeterminacy in the picture, it is a short step to class-theoretic potentialism, understood as the study of different \textit{precise} interpretations of the plural variables and how we may move between these interpretations. Much as a \textit{set-theoretic} multiversist can extract a \textit{set-theoretic} potentialism from their multiversism,\footnote{See the introduction of \cite{HamkinsLinnebo2018a} for a brief discussion of the move from set-theoretic multiversism to set-theoretic potentialism.}
so a theorist who is effectively a multiversist about the \textit{pluralities} that exist (and hence a multiversist about \textit{classes} too) can extract a \textit{class-theoretic} potentialism from their kind of multiversism.

So, to conclude this section, in addition to bottom-up approaches and their twin pillars of \textbf{Initial World} and \textbf{Individuation}, we have top-down approaches based on the following two ideas:

\begin{description}
\item[Referential Indeterminacy] Over a given universe of sets $V$, reference to \textit{the classes of $V$} is not determinate (i.e. does not pick out a unique privileged interpretation).
\item[Interrelation of Interpretations] Class-theoretic potentialism can be understood as interrelating these distinct possible interpretations (e.g. how one can move between them, what theories they satisfy, etc.).
\end{description}

\section{Class-theoretic potentialist systems}\label{systems}

With these motivations for the broad idea of class-theoretic potentialism in hand, it is time to lend some mathematical precision to their study. In this section we'll discuss some class-theoretic potentialist systems and their relation to bottom-up and top-down approaches.

\subsection{Class-theoretic principles}

We'll use a two-sorted approach to class theory, with \emph{sets} and \emph{classes} as the two types of objects.\footnote{For the reader who wants a more thorough exposition of class theories than we give, we recommend \cite[\S2]{williams2019} for a modern treatment.} 
A model of class theory will be denoted $(M,\Xcal)$, where $M$ is the sets and $\Xcal$ is the classes. Our focus will be transitive models, for whom their membership relation is the true $\in$, and we will suppress the membership relation in the notation.\footnote{The reason for this restriction is most researchers in this area view nonstandard models as unintended, ersatz models. For the reader who does not share this view, we remark that some of our technical results about countable transitive models can be reworked to apply to nonstandard models. This requires more care and complicates the exposition, so we leave it to such a reader to check what generalises to that context.}

Call a formula in the language of class theory \emph{elementary} if its quantifiers only occur over set variables (but class parameters are allowed).

\begin{definition}[Class theories]
All our class theories will include $\ZFC$ for the sets. Where they differ is in their axioms for classes.\footnote{Though these class axioms can have consequences for the sets, e.g. $\Con(\ZFC)$.} 
They also include an extensionality axiom for classes and a replacement axiom for classes---if $F$ is a class function and $a$ is a set then $F''a$ is a set.
\begin{itemize}
\item Adding the predicative comprehension schema, viz. the instances of comprehension for elementary formulas, gives \emph{G\"odel--Bernays class theory} $\GB$.
\item Adding the full impredicative comprehension schema, viz. all instances of comprehension, including those with class quantifiers, gives \emph{Morse--Kelley class theory} $\MK$.
\end{itemize}
\end{definition}

Beyond these two class theories certain class theoretic principles will arise in our investigation.

\begin{definition}
\emph{Global choice} is the assertion that there is a global choice function for all nonempty sets. Equivalently, it may be formulated as the assertion of a global well-order. Let $\GBC$ denote $\GB$ + global choice.
\end{definition}

It is well-known that $\MK$ does not imply global choice, but that global choice has no consequences for sets. Given a model of class theory a generic global well-order can be added by a forcing which does not add sets. See \cite{Felgner1971} for a classic treatment of this construction.

\begin{definition}
\emph{Elementary transfinite recursion} $\ETR$ is the principle asserting that transfinite recursion of elementary properties along well-founded classes have solutions. (See \cite[\S4]{GitmanHamkins2017a} for a more precise formulation.)
\end{definition}

Observe that $\MK$ proves $\ETR$, since one can define the solution to an elementary transfinite recursion  with an impredicative formula. On the other hand, $\ETR$ exceeds $\GB$ in consistency strength, since the Tarskian truth predicate for $V$ can be given by elementary transfinite recursion, and thereby $\ETR$ proves the consistency of $\ZFC$.

Indeed, $\ETR$ is closely connected to truth predicates, and can equivalently be expressed as a truth-theoretic principle.

\begin{theorem}[Fujimoto \cite{fujimoto2012}]
Over $\GB$, $\ETR$ is equivalent to the assertion that for any class $A$ and any class well-order $\Gamma$ the length $\Gamma$ iterated truth predicate relative to the parameter $A$ exists.\footnote{See the beginning of \S\ref{truth-potentialism} for definitions and fuller discussion of truth predicates and iterated truth predicates.}
\end{theorem}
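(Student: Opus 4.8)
The plan is to prove the two directions separately over $\GB$. First recall (cf.\ the start of \S\ref{truth-potentialism}) that for a class $A$ and a class well-order $\Gamma$ the length $\Gamma$ iterated truth predicate $\Tr^A_\Gamma$ is the class of triples $(\gamma,\godel{\psi},\bar a)$, $\gamma\in\Gamma$, such that $\psi(\bar a)$ holds in $(V,\in,A,\Tr^A_\Gamma\restriction\gamma)$, where $\Tr^A_\Gamma\restriction\gamma$ denotes the restriction of $\Tr^A_\Gamma$ to levels $\Gamma$-below $\gamma$. Its defining clauses---the atomic clause, the Boolean clauses, the quantifier clauses, and the clause governing the predicate symbol interpreted by $\Tr^A_\Gamma\restriction\gamma$---are each elementary, i.e.\ first order over $(V,\in)$ allowing $A$ and $\Tr^A_\Gamma\restriction\gamma$ as class parameters.

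For the direction from $\ETR$ to iterated truth predicates, I would fix $A$ and $\Gamma$ and note that the construction of $\Tr^A_\Gamma$ nests two recursions: one along $\Gamma$ for the levels, and within each level one along $\omega$ for the Tarskian recursion on formula complexity. The plan is to fold these into a single elementary transfinite recursion along the class well-order $\omega\cdot\Gamma$ (that is, $\Gamma\times\omega$ ordered lexicographically with the $\Gamma$-coordinate dominant), which is a class well-order definable from $\Gamma$; at a stage $(\gamma,n+1)$ the truth values of complexity-$(n+1)$ formulas at level $\gamma$ are fixed by an elementary condition on the values already assigned at stages $(\gamma,m)$ with $m\le n$ and at all stages $(\delta,\cdot)$ with $\delta<_\Gamma\gamma$. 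Applying $\ETR$ along $\omega\cdot\Gamma$ then yields a solution, and a routine check identifies its obvious reduct with $\Tr^A_\Gamma$.

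For the converse, suppose every iterated truth predicate exists, and let an elementary transfinite recursion be given by an elementary formula $\varphi(x,F,\gamma,Z)$ with class parameter $Z$, to be iterated along a class well-order $\Gamma$; I must produce a class $S\subseteq\Gamma\times V$ with $S_\gamma=\{x:\varphi(x,S\restriction\gamma,\gamma,Z)\}$ for all $\gamma\in\Gamma$, writing $S_\gamma=\{x:(\gamma,x)\in S\}$ and $S\restriction\gamma$ for the restriction of $S$ to slices strictly $\Gamma$-below $\gamma$. I would code $\Gamma$, $Z$, and the data of $\varphi$ into a single class $A$, set $\Tr=\Tr^A_\Gamma$ (which exists by hypothesis), and use the fixed-point lemma to obtain an elementary formula $\theta(v_0,v_1)$ in the language with a predicate $T$ for $\Tr$ such that $\GB$ proves
\[
\theta(\gamma,x)\iff\varphi\bigl(x,\,\{(\delta,y):\delta<_\Gamma\gamma\wedge T(\gamma,\godel{\theta},(\delta,y))\},\,\gamma,\,Z\bigr);
\]
this is legitimate because, $\varphi$ being elementary, substituting this set-sized defined class for the class argument $F$ keeps the formula elementary (each occurrence ``$t\in F$'' becomes a set-quantified elementary condition on $t$). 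Then put $S:=\{(\gamma,x):\gamma\in\Gamma\text{ and }(V,\in,A,\Tr)\models\theta(\gamma,x)\}$, a class by elementary comprehension in $\GB$. The crux is that the level-$\gamma$ slice of $\Tr$ is, by hypothesis, the genuine truth predicate of $(V,\in,A,\Tr\restriction\gamma)$, so for $\delta<_\Gamma\gamma$ the Tarskian clauses give $T(\gamma,\godel{\theta},(\delta,y))\iff(V,\in,A,\Tr\restriction\gamma)\models\theta(\delta,y)\iff(V,\in,A,\Tr)\models\theta(\delta,y)\iff(\delta,y)\in S$, the middle equivalence holding because $\theta(\delta,\cdot)$ consults $\Tr$ only at level $\delta$, which is $\Gamma$-below $\gamma$. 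Hence the defined class inside $\varphi$ above is exactly $S\restriction\gamma$, so $\theta(\gamma,x)\iff\varphi(x,S\restriction\gamma,\gamma,Z)$, i.e.\ $S_\gamma=\{x:\varphi(x,S\restriction\gamma,\gamma,Z)\}$ for every $\gamma\in\Gamma$, so $S$ witnesses the recursion (uniqueness, if wanted, follows by elementary induction along the well-founded class $\Gamma$).

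The main obstacle I anticipate is the bookkeeping in the converse: one must choose the fixed-point formula $\theta$ so that the self-reference goes through and so that the level at which $\Tr$ is consulted for a slice $S_\delta$ always lies strictly $\Gamma$-below any $\gamma$ that needs to reference it, so that Tarskian correctness transfers ``truth of $\theta$-instances'' downward into ``membership in lower slices of $S$'' without circularity. One also has to check, in the forward direction, that the doubly-indexed construction of $\Tr^A_\Gamma$ genuinely packages as a single elementary recursion along a class well-order---the padding to $\omega\cdot\Gamma$ is what makes the formula-complexity recursion explicit---and, throughout, that the ambient notion of ``class well-order'' carries enough well-foundedness for elementary induction along $\Gamma$ to be available in $\GB$.
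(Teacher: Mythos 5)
The paper does not actually prove this statement: it is quoted from Fujimoto's paper \cite{fujimoto2012} and used as a black box, so there is no internal proof to compare against. Judged on its own, your reconstruction is essentially the standard argument behind the cited result, and both directions are sound. The forward direction (fold the level recursion along $\Gamma$ and the Tarskian complexity recursion into one elementary recursion along $\Gamma\times\omega$, i.e.\ $\omega\cdot\Gamma$) is exactly how one shows $\ETR$ yields $\Tr_\Gamma(A)$, and your converse via the diagonal lemma --- producing a self-referential $\theta$ whose level-$\gamma$ evaluations carve the $\gamma$-th slice of the recursion out of the iterated truth predicate, with correctness secured by the locality observation that $\theta(\delta,\cdot)$ only consults the truth predicate at level $\delta<_\Gamma\gamma$ --- is the known proof; note that your coherence chain needs no transfinite induction, only the Tarski clauses applied to the one fixed formula $\theta$, which is a nice feature of the write-up.

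Two points deserve tightening. First, there is a language mismatch: you describe $\theta$ as a formula ``with a predicate $T$ for $\Tr$'', but the iterated truth predicate only evaluates formulas of the language with the internal symbol $\hat\Theta$, which at level $\gamma$ is interpreted by $\Tr\rest\gamma$, not by the full $\Tr$; your locality argument is precisely what shows the two readings agree on the instances you use, but the fixed point should officially be taken in the $\hat\Theta$-language so that $T(\gamma,\godel{\theta},(\delta,y))$ is meaningful as a clause of the iterated truth predicate. Second, the paper's $\ETR$ is recursion along arbitrary well-founded class relations, whereas your converse delivers solutions only for recursions along class well-orders $\Gamma$; one needs the standard (but not automatic) reduction of recursion along a well-founded class relation to recursion along a class well-order, which your proposal does not address and which is part of what the citation to \cite{fujimoto2012} (see also \cite{GitmanHamkins2017a,williams2019}) is carrying.
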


We caution the reader that for this theorem to be true it must include class well-orders far longer than $\Ord$. It's straightforward to define a well-order of ordertype $\Ord + 1$, or $\Ord + \Ord$, or $\Ord^2$, following similar constructions on $\omega$. Stronger class theoretic principles allow one to define longer and longer well-orders, akin to how stronger subsystems of second-order arithmetic allow the definition of longer and longer countable well-orders.

One can restrict $\ETR$ to get a hierarchy of transfinite recursion principles. If $\Gamma$ is a class well-order let $\ETR(\Gamma)$ denote the restriction of $\ETR$ to recursions along well-founded classes of rank $\mathord{\le}\Gamma$ and let $\ETR(<\Gamma)$ denote the restriction of $\ETR$ to rank $\mathord{<}\Gamma$. These principles separate from full $\ETR$ and from each other based on $\Gamma$, according to consistency strength; see \cite{williams2019} for details.

The existence of models of any of these class theories is beyond $\ZF$ in consistency strength. But the large cardinal strength is very mild. All constructions we study are below the level of one inaccessible cardinal.

\subsection{Potentialist systems}

A \emph{potentialist system} is a collection of structures of the same type, ordered by a reflexive and transitive relation $\subseteq$ which refines the substructure relation. Potentialist systems give a formalisation of a domain we can think of as dynamically growing. In this section, we will provide the basic definitions for the potentialist systems we plan on considering. Our two philosophical approaches to class-theoretic potentialism (viz. bottom-up and top-down) will correspond to two different ways of studying potentialist systems mathematically. In both cases, we will consider the sets as a fixed totality, with the potential nature entirely in the classes.

For bottom-up approaches, the potentialist systems will be specified by rules which specify the base world and how worlds may be extended. We will look at extension rules based on adding truth predicates and certain class forcing extensions; we leave other extension rules to future work. We then ask which if any potentialist systems satisfy these rules. For example, we will show that the rules corresponding to liberal predicativism and property potentialism admit smallest systems satisfying those rules.

For top-down approaches, we consider all possible collections of classes which meet some basic criteria. For $M$ a model of $\ZFC$, say that a \emph{top-down class potentialist system for $M$} is the collection of all worlds $(M,\Xcal) \models T$ satisfying a fixed property, where $T$ is a class theory. For ease of writing we call such $\Xcal$ a \emph{$T$-expansion for $M$}. We will consider two choices for $M$ and this fixed property, corresponding to different philosophical commitments. Corresponding to set multiversism we will look at countable transitive $M$ and the top-down class potentialist system for $M$ consisting of all countable $(M,\Xcal) \models T$. Additionally, we will make extra assumptions on $M$ to ensure there actually are $T$-expansions for $M$; cf. Proposition~\ref{prop:worldly}. 
Corresponding to set universism we will look at $M = V_\kappa$ for some inaccessible $\kappa$ and the top-down class potentialist system for $M$ consisting of all $(M,\Xcal) \models T$ with $\card \Xcal = \kappa$. This restriction avoids the trivial case of allowing the world $(M,\powerset(M))$, which would be morally nothing more than class theoretic actualism.


For both approaches, an important goal will be to understand the modal logic of the potentialist systems in play. We'll provide some definitions and background for this before we give a more fine-grained analysis.

\section{Modal logics of class-theoretic potentialism}\label{modallogics}

Given a potentialist system $(\Afrak, \subseteq)$ there is a natural modal interpretation: $\necessary \phi$ holds at a world $M \in \Afrak$ if $\phi$ holds in every $N \supseteq M$ with $N \in \Afrak$ and $\possible \phi$ holds at a world $M \in \Afrak$ if $\phi$ holds in some $N \supseteq M$ with $N \in \Afrak$. 
We will look only at modal formulae which are obtained from propositional modal formulae by substituting class theoretic formulae for the propositional variables. Quantifiers are allowed in the class theoretic formulae we substitute, but we do not work with quantified modal logic per se.
A propositional modal formula is \emph{valid} at $M \in \Afrak$ if it's true for any substitution of class theoretic formulae for the propositional variables. 
For example, $\necessary p \impl p$ is valid at every world because if $\phi$ holds in every extension of $M$ it in particular holds at $M$.\footnote{For the reader unfamiliar with modal logic looking for a general resource we recommend \cite{cocchiarella-freund2008} and \cite{BdRV2002}. For the application of modal logic to set theoretic potentialism we recommend \cite{HamkinsLoewe2008a} and \cite{HamkinsLinnebo2018a}.}

In the particular case of class-theoretic potentialist systems, the extension relation $(M,\Xcal) \subseteq (M,\Ycal)$ is purely in the classes---the sets are fixed. So $(M,\Xcal) \models \possible \phi$ if there is a larger system of classes which satisfies $\phi$ and $(M,\Xcal) \models \necessary \phi$ if every larger system of classes satisfies $\phi$. For these systems, it will not be possible to express $\necessary \phi$ nor $\possible \phi$ in the language of class theory. (Contrast with the modal logic of forcing \cite{HamkinsLoewe2008a} where the modal operations are expressible in the language of set theory.)

A key mathematical question is then: what are the \emph{modal validities} of $\Afrak$, the collection of modal assertions valid at every world in $\Afrak$? And one can refine this question, by asking whether this is allowing or disallowing parameters in formulas changes the validities, or by asking whether it varies by world.

\begin{definition}
$\Sfour$ is the propositional modal theory obtained by closing under the inference rules of \emph{modus ponens} and necessitation (viz. if $\phi$ is a theorem then so is $\necessary \phi$), starting with the following axioms, for any propositional modal formulae $\phi$ and $\psi$:
\begin{align*}
(\mathsf{K})\qquad & \necessary (\phi \impl \psi) \impl (\necessary \phi \impl \necessary \psi) \\
(\mathsf{Dual})\qquad & \neg \possible \phi \iff \necessary \neg \psi \\
(\mathsf{T})\qquad & \necessary \phi \impl \phi \\
(\mathsf{4})\qquad & \necessary \phi \impl \necessary \necessary \phi
\end{align*}
\end{definition}

$\mathsf K$ and $\mathsf{Dual}$ hold for any modal logic which comes from a Kripke frame; they come from the corresponding rules for universal quantification applied to quantifying over worlds in the frame. $\mathsf T$ holds on any frame whose accessibility relation is reflexive, and $\mathsf 4$ holds on any frame whose accessibility relation is transitive. Potentialist systems are by definition reflexive and transitive, so they always validate $\Sfour$.

Some other modal axioms we consider in this article are:

\begin{definition}
\begin{align*}
(\mathsf{.2})\qquad & \possible \necessary \phi \impl \necessary \possible \phi \\
(\mathsf{.3})\qquad & (\possible \phi \land \possible \psi) \impl \possible [(\phi \land \possible \psi) \lor (\possible \phi \land \psi)]
\end{align*}
Adding these to $\Sfour$ gives, respectively, the theories $\Sfourtwo$ and $\Sfourthree$.
\end{definition}

It is straightforward to check that $\mathsf{.2}$ is valid for any frame whose accessibility relation is directed and $\mathsf{.3}$ holds for any frame whose accessibility relation is linear. The converses, however, do not hold. Counterexamples to both can be found in the modal logic of forcing. The modal logic of forcing allowing any poset is $\Sfourtwo$ \cite{HamkinsLoewe2008a}, but by an old result of Mostowski \cite{Mostowski1976a} there are forcing extensions with no common width-extension to a model of $\ZFC$. And the modal logic of collapse forcings is $\Sfourthree$ \cite{hamkins-leibman-lowe2015}, but this potentialist system is not linearly ordered since you could pick different generics for the same collapse forcing. Nonetheless, these frame conditions are a fruitful heuristic for thinking about these modal axioms.

We wish to briefly discuss this Mostowski result, which was originally formulated in the context of class forcing with models of $\GB$. This result will imply that many of class potentialist systems we consider are not directed, and its proof foreshadows the argument for Theorem~\ref{thm:killing-truth}. We state a special case of his more general result; see the original paper \cite{Mostowski1976a} or the recent \cite{HHKVW2019} for further extensions.

\begin{theorem}[Mostowski]
Let $(M,\Xcal) \models \GB$ be countable and transitive. Then there are $C,B \subseteq \Add(\Ord,1)^M$ generic over $(M,\Xcal)$ so that from $C$ and $B$ can be defined over $M$ a real which codes an order of ordertype $\Ord^M$. Consequently, no $\GB$-expansion for $M$ can contain both $C$ and $B$.
\end{theorem}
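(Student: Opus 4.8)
The plan is to construct the two generics $C$ and $B$ for $\Add(\Ord,1)^M$ by an interleaved bookkeeping argument, so that together they encode a cofinal $\Ord^M$-sequence of ordinals as a single real. Since $\Add(\Ord,1)^M$ consists of set-sized binary strings (partial functions $p \colon \alpha \to 2$ for $\alpha \in \Ord^M$), a filter generic over $(M,\Xcal)$ is a class function $\Ord^M \to 2$. The key observation is that any real $r \in 2^\omega$ appearing in $M$ can be read off from the restriction $C \rest [\gamma, \gamma+\omega)$ for a suitable ordinal block, provided we arrange the construction to place a copy of $r$ there; and dually, a single real coding a well-order of type $\Ord^M$ cannot live in any $\GB$-expansion of $M$, since $\GB$ proves that $\Ord$ is not the ordertype of any set, hence not coded by any real.

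First I would fix an enumeration (in $V$, using countability of $(M,\Xcal)$) of all dense subclasses of $\Add(\Ord,1)^M$ coded in $\Xcal$, say $\langle D_n : n < \omega\rangle$, and simultaneously fix in $M$ a definable way of associating to each pair of finite binary strings an ordinal, so that by reading off matching blocks of $C$ and $B$ we recover an increasing sequence $\langle \alpha_\xi : \xi < \Ord^M\rangle$ cofinal in $\Ord^M$. Then I would build $C$ and $B$ as increasing unions of conditions $c_\xi$ and $b_\xi$ (of length growing cofinally in $\Ord^M$), alternating three tasks at successive stages: (i) meet the next dense class $D_n$ for $C$ and for $B$ — this is possible because each $D_n$ is a class of $\Xcal$ and genericity over a single model only requires meeting countably many dense classes; (ii) at stage $\xi$, extend $c$ and $b$ so that the designated block encodes the value $\alpha_\xi$, chosen to be larger than the length of the conditions built so far (this is what forces the decoded order to have ordertype exactly $\Ord^M$ and to be cofinal); (iii) take unions at limits. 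The resulting $C$ and $B$ are each generic over $(M,\Xcal)$ since they meet every dense class in $\Xcal$, and from $C \oplus B$ one defines over $M$ — using the fixed decoding — a real $r$ coding a linear order of ordertype $\Ord^M$.

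For the final clause, suppose toward a contradiction that some $\GB$-expansion $(M,\Ycal)$ contained both $C$ and $B$. Then $\Ycal$ contains the pair, and since the decoding of $r$ from $C \oplus B$ is given by an elementary formula with class parameters $C, B$, predicative comprehension yields $r \in \Ycal$; but $r$ is a real, hence an element of $M$, coding a well-order of ordertype $\Ord^M$. This contradicts the fact that $\GB$ (indeed just $\ZF$ for the sets) proves every well-order coded by a real has set-sized — in particular, less than $\Ord$ — ordertype. Hence no $\GB$-expansion for $M$ contains both $C$ and $B$.

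The main obstacle is arranging task (ii) — the encoding of the cofinal sequence — compatibly with task (i), the genericity requirement. One must be careful that forcing the blocks of $C$ and $B$ to take prescribed values does not conflict with meeting dense classes: the point is that $\Add(\Ord,1)$ is $\mathord{<}\Ord$-closed-like in the relevant sense (one can always extend past the current length), so after meeting $D_n$ one simply appends the encoding block on top, and appending is always possible since conditions are set-length initial segments with arbitrary binary values beyond the current support. The only genuine subtlety is ensuring the decoded order is genuinely of ordertype $\Ord^M$ and not some proper initial segment: this is handled by insisting at stage $\xi$ that the encoded ordinal $\alpha_\xi$ strictly exceeds $\sup$ of all lengths used before stage $\xi$, which makes the sequence $\langle \alpha_\xi\rangle$ strictly increasing and cofinal in $\Ord^M$, so its ordertype is exactly $\Ord^M$. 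This is essentially the diagonalization underpinning Theorem~\ref{thm:killing-truth}, which is why the proof foreshadows that argument.
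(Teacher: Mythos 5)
There is a genuine gap: your construction encodes the wrong kind of object, so the real $r$ that the theorem demands never materialises. You run the recursion for $\Ord^M$ many stages and arrange that the blocks of $C$ and $B$ decode an increasing sequence $\seq{\alpha_\xi : \xi < \Ord^M}$ cofinal in $\Ord^M$. But a class-length cofinal sequence is completely harmless---the identity function on $\Ord^M$ is already such a class and belongs to every $\GB$-expansion---so it yields no obstruction to a $\GB$-expansion containing both $C$ and $B$, and the jump from this decoded class to ``a real $r$ coding a linear order of ordertype $\Ord^M$'' is a non sequitur: a real has only $\omega$ bits, while your blocks produce a proper-class-length object. Even read charitably as using only $\omega$ many coding blocks, the decoded object would be a cofinal $\omega$-sequence of ordinals, which would suffice for the final clause (via class replacement) but still not give the stated definable real coding an order of type $\Ord^M$. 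Two further problems: an $M$-definable assignment of ordinals to pairs of \emph{finite} binary strings cannot have range cofinal in $\Ord^M$, since by replacement in $M$ its range is a bounded set of ordinals; and an $\Ord^M$-length recursion steered by an external enumeration of dense classes needs an argument that the unions taken at limit stages are conditions, i.e.\ elements of $M$, which you do not supply (the paper only needs such care in Theorem~\ref{thm:killing-truth}, where the recursion is internal to a $\GB$-expansion).

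The missing idea, and the paper's route, is to exploit the external countability of $(M,\Xcal)$ at the level of the coded object itself: since $\Ord^M$ is a countable ordinal in $V$, fix in advance a real $r \subseteq \omega$ coding a well-order of ordertype $\Ord^M$, and run an $\omega$-stage back-and-forth. At stage $n$ extend the approximation to $C$ to meet the next dense class and then append the $n$-th bit of $r$, padding the approximation to $B$ with $0$s to the same length; then swap roles, meeting the next dense class for $B$ while padding $C$ with $0$s. Only countably many dense classes need to be met (as you correctly note), so $\omega$ stages suffice, and the alternating blocks of $0$s let one recover the coding points, hence $r$, definably over $M$ from the pair $(C,B)$. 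Any $\GB$-expansion containing both $C$ and $B$ would then make $r$ a set of $M$, equivalently would see that $\Ord^M$ has countable cofinality, contradicting class replacement---exactly the contradiction you describe correctly in your final paragraph, but which your construction does not actually feed.
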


\begin{proof}[Proof Sketch]
Let $r \subseteq \omega$ be such a real. The strategy is to place the bits of $r$ in the Cohen generic $C$ so that given both generics we can recover the coding points. The strategy for this is to alternate between building up $C$ and $B$ to meet the next generic in the list. At stage $n$, extend the partial construction of $C$ to meet the next dense set, then place the $n$-th bit of $r$. Meanwhile, extend the partial construction of $B$ by adding $0$s to have the same length as what we've built up for $C$. Then alternate: further extend the partial construction of $B$ to meet the next dense set, while putting $0$s on the end of the approximation for $C$ to have the same length. Given the generics $C$ and $B$ built up in this way, you can recover the coding points by following the alternating blocks of $0$s. Thereby, any $\GB$-expansion for $M$ which contained both $B$ and $C$ would think that $M$ has countable cofinality, but that would contradict the class replacement axiom
\end{proof}

The top-down class potentialist system consisting of all countable $\GB$-expansions for countable transitive $M$ will fail to be directed, by this theorem. More generally, any class potentialist system over a countable transitive universe of sets which allows extensions by Cohen generic classes ordinals will also fail to be directed. Nevertheless, it is possible for such a potentialist system to still validate $\mathsf{.2}$ and exhibit directedness in its modal truths; see Theorem~\ref{thm:s42}. On the other hand, we will see that the top-down class potentialist systems of all countable $\GB$-expansions fails to validate $\mathsf{.2}$; see Theorem~\ref{thm:killing-truth}. Contrast this to the modal logic of (set) forcing, where the Mostowski theorem applies but the modal validities are nonetheless $\Sfourtwo$.

As we discuss elsewhere the validity of $\mathsf{.2}$ provides a dividing line for potentialist systems. To use the language of \cite[\S7]{hamkins2018:arithmetic-potentialism}, systems for which $\mathsf{.2}$ is invalid exhibit a ``radical branching'' whereas systems for which $\mathsf{.2}$ is valid exhibit ``directed convergence''. Additionally, the validity of $\mathsf{.2}$ enables mirroring theorems, as in \cite{Linnebo2013a} or \cite{HamkinsLinnebo2018a}.

Given the information that we get from knowing a potentialist system's modal validities, we want tools which allow us to calibrate them. Lower bounds are relatively easy to determine---one simply shows that one can cook up worlds of the required kind---but showing that \textit{no more} is satisfied (i.e. upper bounds) is substantially harder. Let's briefly describe the main tools, \emph{control statements}, used to compute upper bounds.

\begin{definition}[\cite{HamkinsLoewe2008a,hamkins-leibman-lowe2015}]
\ 
\begin{itemize}
\item A \emph{button} is an assertion $\beta$ so that $\possible \necessary \beta$ holds at every world. If $\necessary \beta$ holds at a world $M$, we say $\beta$ is \emph{pushed} for $M$, otherwise we say $\beta$ is \emph{unpushed}. The intuition is, you can push a button, making $\beta$ true forevermore, but once you push it you can never unpush it.

\item A \emph{switch} is an assertion $\sigma$ so that $\possible \sigma$ and $\possible \neg \sigma$ holds at every world. The intuition is, you can toggle the truth value of $\sigma$ freely back and forth.

\item A \emph{ratchet} is a finite sequence $\rho_0, \ldots, \rho_n$ of buttons so that pushing $\rho_i$ pushes $\rho_j$ for all $j<i$. The intuition is, you can ratchet forward but never back.

\item A \emph{long ratchet} of length a limit well-order $\Gamma$ is is a uniformly definable sequence of buttons $r_\xi$, indexed by $\xi < \Gamma$, so that pushing $r_\xi$ pushes $r_\eta$ for all $\eta < \xi$ and so that in no world are all buttons on the ratchet pushed.
\end{itemize}
A collection of control statements is called \emph{independent} if any subcollection of the control statement can be manipulated without affecting any of the other control statements.
\end{definition}

By showing that a potentialist system admits certain control statements, we get upper bounds for their modal validities. 

\begin{theorem}[\cite{HamkinsLoewe2008a}] \label{thm:buttons-and-switches}
If a potentialist system admits arbitrarily large finite families of independent buttons and switches then its modal validities are contained within $\Sfourtwo$.
\end{theorem}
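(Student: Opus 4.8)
The plan is to prove the contrapositive: given a propositional modal assertion $\varphi$ with $\Sfourtwo \nvdash \varphi$, I would produce a world of $\Afrak$ and a substitution of class-theoretic formulas for the propositional variables of $\varphi$ under which $\varphi$ comes out false there, so that $\varphi$ is not among the modal validities of $\Afrak$. The argument has two halves: a completeness theorem expressing $\Sfourtwo$ as the logic of a concrete class of finite Kripke frames, and a \emph{simulation} of such a frame inside $\Afrak$ built from the buttons and switches.

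For the first half I would invoke the standard fact (the modal-logical heart of the forcing computation in \cite{HamkinsLoewe2008a}) that $\Sfourtwo$ has the finite model property and, more precisely, that every finite rooted $\Sfourtwo$-frame is the image, under a surjective bounded morphism (p-morphism), of one of the frames $F_{m,n} := \powerset(m) \times \powerset(n)$ ordered by $(A,s) \le (B,t) \iff A \subseteq B$ (the finite \emph{pre-Boolean algebras}: a Boolean algebra of ``button configurations'' with each node blown up into a cluster of ``switch settings''). Consequently, from $\Sfourtwo \nvdash \varphi$ one obtains $m$, $n$ and a valuation $V$ on $F_{m,n}$ refuting $\varphi$ at some node; replacing $F_{m,n}$ by the subframe generated by that node, which is again of the form $F_{m',n}$, I may assume $\varphi$ fails at a root $(\emptyset, s_0)$ of $F_{m,n}$.

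Next the simulation. By hypothesis there is a world $W_0 \in \Afrak$ carrying $m$ independent buttons $\beta_1, \dots, \beta_m$, all unpushed at $W_0$, together with $n$ independent switches $\sigma_1, \dots, \sigma_n$, toggled so that the switch-configuration of $W_0$ is $s_0$. Writing $\Afrak_{W_0}$ for the cone of worlds extending $W_0$, I would define $h \colon \Afrak_{W_0} \to F_{m,n}$ by $h(W) = (\{j : \beta_j \text{ is pushed at } W\},\ \{j : \sigma_j \text{ holds at } W\})$ and check that $h$ is a surjective bounded morphism: buttons remain pushed in extensions, so the first coordinate is monotone and $W \subseteq W' \Rightarrow h(W) \le h(W')$; and given $h(W) = (A,s)$ and any node $(B,t) \ge (A,s)$, the independence of the control statements yields an extension $W' \supseteq W$ pushing exactly the buttons in $B \setminus A$ and resetting the switches to $t$, whence $h(W') = (B,t)$ — and since $h(W_0) = (\emptyset,s_0)$ is a root of $F_{m,n}$, $h$ is onto. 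Then I would pull the valuation back: for each propositional variable $p_i$ of $\varphi$, let $\psi_i$ be the class-theoretic formula expressing that the current button/switch configuration lies in $V(p_i)$, namely the finite disjunction over $(A,s) \in V(p_i)$ of the sentence asserting that exactly the buttons in $A$ are pushed and exactly the switches in $s$ hold. Since $h$ is a bounded morphism and $\psi_i$ defines $h^{-1}(V(p_i))$, a routine induction on subformulas shows that for every $W \in \Afrak_{W_0}$ and every subformula $\theta$ of $\varphi$, $\theta[\vec\psi]$ holds at $W$ in $\Afrak$ iff $\theta$ holds at $h(W)$ in $(F_{m,n},V)$; taking $\theta = \varphi$ and $W = W_0$ gives that $\varphi[\vec\psi]$ fails at $W_0$, as required.

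The step I expect to give the most trouble is making honest sense of what a button contributes on the class-theoretic side: ``$\beta_j$ is pushed'' means $\necessary \beta_j$, which in these systems is \emph{not} expressible in the language of class theory, whereas each substituted $\psi_i$ must be a genuine class-theoretic formula. I would handle this by working with buttons that are \emph{latched}, i.e.\ satisfy $\beta_j \leftrightarrow \necessary \beta_j$ throughout $\Afrak_{W_0}$ (once true, true forever), which holds for the natural examples and can in any case be arranged by replacing a button by its ``pushed'' form or by passing to a suitable sub-system; in the forcing analogue of \cite{HamkinsLoewe2008a} this subtlety is invisible because there $\necessary\beta$ is already first-order. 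Two further points need attention but are routine: the existence of a world $W_0$ carrying the required \emph{unpushed} independent family — this is exactly the content of ``arbitrarily large finite families of independent buttons and switches'', together with the freedom to pick the world at which $\varphi$ is to fail and, for switches, the freedom to set them however one likes at any world — and the bounded-morphism transfer lemma for modal truth. The one genuinely external input is the completeness of $\Sfourtwo$ for finite pre-Boolean algebras, which I would simply cite; note that the switch factor $\powerset(n)$ is essential here, as it prevents a unique top node and so keeps the simulated frame strictly weaker than the logic of finite frames with a greatest element.
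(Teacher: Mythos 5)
The paper does not prove this theorem at all---it is imported as a black box from Hamkins--L\"owe---and your reconstruction is essentially the argument of that source, in the form needed for abstract potentialist systems (as in Hamkins--Linnebo): completeness of $\Sfourtwo$ for the finite pre-Boolean-algebra frames $F_{m,n}$, a labelling of the cone above a suitable world by the pair (pushed buttons, switch values), verification via independence that this labelling is a surjective bounded morphism, and pullback of the valuation by Boolean combinations of the control statements, followed by the routine truth-transfer induction. That outline is sound. The one point to tighten is your fix for the inexpressibility of $\necessary$: the fallback of ``replacing a button by its pushed form'' is precisely the move that is unavailable here, since $\necessary\beta$ is not a class-theoretic formula (it works in the forcing setting of Hamkins--L\"owe only because the forcing modality is first-order expressible), and ``passing to a suitable sub-system'' changes the potentialist system whose validities are being computed. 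So in this general setting the latching condition $\beta \leftrightarrow \necessary\beta$ on the cone is genuinely needed for the pullback of the valuation, not merely convenient, and it does not follow from the paper's official definition of a button; one should either build it into what ``independent family of buttons'' means or note that the witnessing families can be taken latched. This is harmless for the theorem as it is actually used in the paper, since the buttons constructed there (``$\Tr_{\mu+1}(C_i)$ exists'') are latched because extensions only ever add classes; likewise your reading of the hypothesis as supplying a world with all buttons unpushed and switches freely settable is the intended one.
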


\begin{theorem}[\cite{hamkins-leibman-lowe2015}] \label{thm:ratchets}
Suppose a potentialist system admits a long ratchet whose length $\Gamma$ is closed under addition of ordinals $\mathord{<}\omega^2$. Then, in any world where the ratchet is not fully pushed has its modal validities contained within $\Sfourthree$.\footnote{Hamkins, Leibman, and L\"owe formulated this result for long ratchets of length $\Ord$, but it is simple to check that what they used is that $\Ord$ is closed under addition of ordinals $\mathord{<}\omega^2$.}
\end{theorem}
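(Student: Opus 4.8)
The plan is to adapt the proof of Theorem~\ref{thm:buttons-and-switches}, using the long ratchet where that argument used independent families of buttons and switches. The key external input is the completeness theorem for $\Sfourthree$: it is sound and complete with respect to the class of finite \emph{linear} Kripke frames, i.e.\ finite frames whose accessibility relation is a reflexive, transitive, connected (no-branching) order. So fix a world $M$ of the potentialist system at which the long ratchet is not fully pushed; to show that the modal validities at $M$ are contained in $\Sfourthree$, it suffices to show that every finite linear Kripke model can be \emph{simulated} from $M$. By this I mean: given a finite linear Kripke model with root $w_0$, one produces a translation $p \mapsto p^\ast$ of propositional variables to class-theoretic assertions so that, for every propositional modal formula $\psi$ in those variables, $M \models \psi^\ast$ in the potentialist system if and only if $\psi$ holds at $w_0$ in the Kripke model. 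Granting this, any $\varphi \notin \Sfourthree$ is refuted at the root of some such model, hence $\varphi^\ast$ fails at $M$, so $\varphi$ is not valid at $M$.

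Second, I would build the simulation from the long ratchet. Let $\langle r_\xi : \xi < \Gamma \rangle$ be the long ratchet, and suppose we are handed a finite linear Kripke model on nodes $0, 1, \dots, n$ (ascending), with root $0$. Let $\alpha < \Gamma$ be the ratchet position at $M$, i.e.\ the supremum of the pushed indices; since $\Gamma$ is a limit closed under addition of ordinals $\mathord{<}\omega^2$ we have $\alpha + \omega \cdot n < \Gamma$, so we may fix landmark indices $\delta_i = \alpha + \omega \cdot i$ for $1 \le i \le n$, with $\omega$-many further ratchet buttons lying strictly between consecutive landmarks (and below $\delta_1$), none of them pushed at $M$. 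Put $b_i$ for the assertion ``$r_{\delta_i}$ is pushed''. Because the $r_\xi$ form a ratchet, pushing $r_{\delta_i}$ pushes every $r_\eta$ with $\eta < \delta_i$, in particular each $r_{\delta_j}$ with $j < i$; hence $\langle b_1, \dots, b_n \rangle$ is itself a finite ratchet of buttons, none of which holds at $M$. For each $i$ let $\theta_i$ abbreviate ``$b_i$ holds and $b_{i+1}$ does not'' (with $\theta_0 \equiv \neg b_1$ and $\theta_n \equiv b_n$), so exactly one $\theta_i$ holds in any world reachable from $M$; if $\theta_i$ holds at $N$ then every extension of $N$ satisfies some $\theta_j$ with $j \ge i$, and for each such $j$ some extension realizes $\theta_j$ (push the landmarks up to $\delta_j$ but not $r_{\delta_{j+1}}$). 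The $\omega$-many unused buttons between landmarks supply the room to carry this out and, within a single level, to pass through enough intermediate worlds to match whatever repetition the target frame requires. Finally define $p^\ast := \bigvee\{\theta_i : 0 \le i \le n \text{ and } p \text{ holds at node } i\}$, a Boolean combination of the $b_i$.

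The main obstacle I expect is the faithfulness of the simulation: one must verify, by induction on $\psi$, that for every world $N$ extending $M$ in which $\theta_i$ holds, $N \models \psi^\ast$ iff $\psi$ holds at node $i$ --- and this has to be done while the ambient potentialist system is far richer than the finite chain it imitates (it has worlds of no canonical form and many more buttons than the $b_i$). The atomic and Boolean clauses are immediate from the definition of $p^\ast$; the work is entirely in the $\possible$ (equivalently $\necessary$) clause, where one must show that for an arbitrary extension $N' \supseteq N$ the only possible ``$\theta$-values'' of $N'$ are the $\theta_j$ with $j \ge i$, and that each such $j$ is genuinely attainable. This is exactly what the monotonicity of the $b_i$ (they are buttons) together with the $\omega$-sized gaps between landmarks are arranged to secure, and it is why $\Gamma$ need only be closed under addition of ordinals $\mathord{<}\omega^2$: an $n$-node target consumes $\omega \cdot n$ of the ratchet. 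Once the simulation lemma is in place the theorem follows as in the first paragraph, and the original Hamkins--Leibman--L\"owe formulation is the special case $\Gamma = \Ord$, since $\Ord$ is closed under addition of ordinals $\mathord{<}\omega^2$.
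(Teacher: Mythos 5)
Your overall strategy is the right one, and it is essentially the argument of Hamkins--Leibman--L\"owe that the paper cites rather than reproves: use a finite-frame completeness theorem for $\Sfourthree$, then simulate an arbitrary such frame above the current world by a labeling $\theta_i$ built from the ratchet, with faithfulness proved by induction on formulas. Your bookkeeping with the landmarks $\delta_i = \alpha + \omega\cdot i$ is also exactly why closure under addition of ordinals $\mathord{<}\omega^2$ is the right hypothesis.

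The genuine gap is in the completeness theorem you invoke and, correspondingly, in what your labels can express. $\Sfourthree$ is \emph{not} complete with respect to finite linear orders; it is complete with respect to finite linear \emph{pre-orders}, i.e.\ finite chains of clusters, where a cluster consists of several mutually accessible worlds carrying different valuations. The logic of finite chains proper is $\Sfourthree + \mathsf{Grz}$, which strictly contains $\Sfourthree$ (the Grzegorczyk axiom holds on every finite partial order but fails on a two-element cluster), and the difference matters here, since a ratchet block lets one refute $\mathsf{Grz}$ in these potentialist systems. So if your target frames are antisymmetric chains $0,1,\dots,n$, the argument only yields containment of the validities in $\Sfourthree + \mathsf{Grz}$, not in $\Sfourthree$. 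To simulate a nontrivial cluster you need statements that can be toggled back and forth while the level stays fixed; but your translation $p^\ast$ is a Boolean combination of the landmark buttons $b_i$ alone, so all worlds at a given level agree on every $p^\ast$, and ``passing through intermediate worlds'' by pushing the spare buttons cannot supply the missing behaviour, because buttons are irreversible while cluster worlds must be revisitable arbitrarily often. The repair---and the real reason the $\omega$-sized blocks are there---is to extract \emph{switches} from the blocks, e.g.\ ``the largest pushed ratchet index in the current block is even'' (compare the switches in Theorem~\ref{thm:s42}, given by parities of lengths of iterated truth predicates), and to label each cluster world by $\theta_i$ conjoined with a prescribed pattern of such switch values; one must also take care at the limit positions where every button of a block but not the next landmark has been pushed, so that the switch statements remain well-defined and toggleable there. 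A smaller point: your verification uses that one can push the ratchet up to exactly $\delta_j$ without pushing $r_{\delta_{j+1}}$; this is part of what a ratchet is meant to provide, but it is not literally guaranteed by the abbreviated definition in the paper, so it should be isolated as an explicit property you rely on.
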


With the setup of potentialist systems and their modal validities in hand, we can begin to examine class-theoretic potentialism mathematically and draw some philosophical conclusions on this basis. As we shall see, many potentialist systems violate the $\mathsf{.2}$ and $\mathsf{.3}$ axioms. These results will help us to raise some challenges for class-theoretic potentialism and help us to elucidate the position further, in particular relating these results to the philosophical motivations considered in \S\ref{motivations} (we do so in \S\ref{bottom-up} and \S\ref{top-down}).

\section{Truth predicates and potentialism}\label{truth-potentialism}

In this section we present some mathematical results about class potentialist systems which involve truth predicates.
Most of the results are phrased in terms of certain bottom-up potentialist systems, but some of the work also applies to top-down potentialist systems; failures of modal principles for the smaller systems can sometimes be pushed up to a larger system. The more philosophically-inclined reader can skim this section; in the subsequent sections we discuss the philosophical significance of the results in this section.

Let us begin by fixing some notation and definitions. We will use capital Greek letters to refer to class well-orders. Addition, multiplication, and exponentiation on these are defined as usual. To match the familiar notation for ordinals, we write $\xi < \Lambda$ to mean $\xi \in \dom(\Lambda)$. To compare elements $\xi$ and $\eta$ of $\Lambda$ we write $\xi < \eta$. Given class well-orders $\Lambda$ and $\Gamma$ say that $\Lambda$ is \emph{closed under addition $\mathord{<}\Gamma$} if whenever $\xi < \Lambda$ and $\eta < \Gamma$ we have that $\xi + \eta < \Gamma$. That is, $\Lambda$ is closed under addition $\mathord{<}\Gamma$ if every element of $\Lambda$ has an $\eta$-th successor in $\Lambda$ for each $\eta < \Gamma$.
Note that whether a class is well-founded can be expressed just by quantifying over sets. Accordingly any two $\GB$-expansions for $M$ agree on whether a common relation is well-founded. 

On the other hand, a model of $\GB$ may think that a class is well-founded while externally it is seen to actually be ill-founded. Say that $(M,\Xcal) \models \GB$ is a \emph{$\beta$-model} if it is correct about which of its classes are well-founded. A short argument yields that if $\Ord^M$ has uncountable cofinality then $(M,\Xcal)$ is a $\beta$-model. In contrast, if $\Ord^M$ has countable cofinality then $(M,\Xcal)$ may fail to be a $\beta$-model. Indeed, being a $\beta$-model is a strong property; for example, Marek and Mostowski showed \cite[Theorem 3.2]{marek-mostowski1975} that if $\beta$ is the least ordinal height of a $\beta$-model of $\MK$, then there are transitive models of $\MK$ of height $<\beta$.

Consider a fixed transitive $M \models \ZF$.
The \emph{truth predicate} for $M$ is the class $\Tr \subseteq M$ which satisfies the  recursive Tarskian truth conditionals for $(M,\in)$. In case we wish to emphasise for which structure $\Tr$ is a truth predicate we will write $\Tr^M$. Observe that the property of being $\Tr^M$ is uniquely determined by a formula which only quantifies over sets, and so any $\GB$-expansion of $M$ can verify whether a class is $\Tr^M$. Given a class $A \subseteq M$ the \emph{truth predicate relative to $A$} is the unique class $\Tr(A) \subseteq M$ which satisfies the recursive Tarski truth conditionals for $(M,\in,A)$, the expansion of $(M,\in)$ with a unary predicate symbol for $A$. Again, it is uniquely determined and can be recognised as such by any $\GB$-expansion of $M$.

Given the truth predicate $\Tr \subseteq M$, we can consider $\Tr(\Tr)$, the truth predicate relative to $\Tr$. This gives expanded resources from which more can be defined. For example, the theory $\GB$ $+$ ``$\Tr$ exists'' proves the consistency of $\ZFC$---restricting $\Tr$ to sentences gives a consistent completion of $\ZFC$. Since $\ZFC$ and $\GB$ are equiconsistent this theory proves the consistency of $\GB$. But this theory cannot prove its own consistency. However, $\GB$ $+$ ``$\Tr(\Tr)$ exists'' does prove the consistency of $\GB$ $+$ ``$\Tr$ exists'' by a similar argument. It won't prove its own consistency, but for that we could use $\Tr(\Tr(\Tr))$. We can continue to build this hierarchy of truth about truth about truth (and so on) higher, even into the transfinite. This hierarchy can be unified in the single definition of an iterated truth predicate. 

Working over our fixed transitive $M \models \ZF$, let $\Lambda$ be a well-order. A \emph{$\Lambda$-iterated truth predicate} is a class $\Theta$ of triples $(\xi,\phi,a)$ where $(\xi,\phi,a) \in \Theta$ intuitively means that $\phi(a)$ is true at level $\xi < \Lambda$.
Here $\phi$ is a formula in the language where we added a predicate symbol $\hat \Theta$ which is interpreted by $\Theta$. Formally, this is defined by a modified form of the Tarskian recursion, with an extra clause in the definition asserting that $(\xi,\godel{\hat \Theta(x,y,z)}, \seq{\eta,\phi,b}) \in \Theta$ if and only if $\eta < \xi$ and $(\eta,\phi,b) \in \Theta$. As with the ordinary case, we can consider iterated truth predicates relative to a class parameter $A$ via the obvious modification. See \cite[\S9]{GHHSW2020} for precise definitions.

$\GB$ suffices to prove the uniqueness of iterated truth predicates. Given two classes which satisfy the definition of being a $\Lambda$-iterated truth predicate relative to $A$, by predicative comprehension we can form the class of indices where they disagree. So if they disagree they must disagree at a minimal stage, from which we can derive a contradiction. We will use $\Tr_\Lambda$ to denote \emph{the} $\Lambda$-iterated truth predicate and $\Tr_\Lambda(A)$ to denote the $\Lambda$-iterated truth predicate relative to $A$. If $\xi < \Lambda$ then we write $\Tr_\xi$ to mean $\Tr_{\Lambda \rest \xi}$. 
Observe that, up to recoding, $\Tr_1$ is the same as $\Tr$ and $\Tr_{\Lambda + 1}$ is the same as $\Tr(\Tr_\Lambda)$, and similarly for relativised truth predicates. In general, $\Tr_\Gamma(\Tr_\Lambda)$ is inter-definable with $\Tr_{\Lambda + \Gamma}$

With these definitions in hand, let us now describe a species of class potentialist systems meant to capture the idea that we can always expand by adding truth predicates. First, a bit of notation. If $\Xcal$ is a collection of classes over $M$ and $A$ is a class over $M$ then let $\Xcal[A] \subseteq \powerset(M)$ be the smallest $\GB$-expansion for $M$ which extends $\Xcal$ and contains $A$.
 Specifically,  $\Xcal[A]$ consists of the classes over $M$ definable using $A$ and finitely many classes from $\Xcal$.

\begin{definition} \label{def:truth-pot}
Say that a class potentialist system over $M \models \ZF$ is a \emph{truth potentialist system} if it satisfies the following three properties. 
\begin{enumerate}
\item $(M,\Def(M))$ is a world, where $\Def(M)$ is the collection of parametrically first-order definable classes over $M$. 
\item If $(M,\Xcal)$ is a world then it satisfies $\GB$. 
\item If $(M,\Xcal)$ is a world and $A \in \Xcal$ then $(M,\Xcal[\Tr(A)])$ is a world.
\end{enumerate}
We can modify the third condition to require truth predicates of a longer length, say of length $\mathord{<}\Lambda$. We call such a system a \emph{$\mathord{<}\Lambda$-length truth potentialist system}.
\begin{enumerate}[(1${}_\Lambda$)]
\setcounter{enumi}{2}
\item If $(M,\Xcal)$ is a world, $\xi < \Lambda$, and $A \in \Xcal$ then $(M,\Xcal[\Tr_\xi(A)])$ is a world.
\end{enumerate}
Observe that truth potentialist systems \textit{simpliciter}, as in Definition~\ref{def:truth-pot}, are the special case where we require iterated truth predicates of length $< 2$. This condition can be further modified, in the obvious way, to require truth predicates along class well-orders of unbounded length. The latter situation we'll call an \emph{unbounded truth potentialist system}.
\end{definition}

The intent of this definition is to give a modal interpretation of views such as Fujimoto's or Linnebo's. Unbounded truth potentialist systems correspond to Fujimoto's proposal, allowing arbitrary length iterated truth predicates. And $\mathord{<}\Ord$-length truth potentialist systems correspond to the class theory one obtains from Linnebo's property theory where properties are individuated iteratively along $\Ord$; see \cite{RobertsTruth}. But we have formulated things in a more general context, allowing different lengths. The point is, we see the same phenomena in the general setting, so small changes in their views won't produce a different outcome.

Before we analyse truth potentialist systems, let us briefly remark that they place restrictions on which $M$ we may consider. 

\begin{proposition}[Krajewski {\cite[p. 475]{marek-mostowski1975}}] \label{prop:worldly}
Consider $(M,\Xcal)$ a transitive model of $\GB$ with $\Tr^M \in \Xcal$. Then $M$ contains a club of ordinals $\alpha$ so that $V_\alpha^M$ is an elementary submodel of $M$.
\end{proposition}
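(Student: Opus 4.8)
The plan is to run the classical L\'evy--Montague reflection argument, but to use the truth predicate $\Tr = \Tr^M$ to reflect the entire first-order theory of $M$ in a single step, rather than one $\Sigma_n$-fragment at a time.

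First I would observe that, because $\Tr \in \Xcal$, the property ``$V_\alpha^M \prec M$'' is expressible by an \emph{elementary} formula with class parameter $\Tr$: for a set ordinal $\alpha$ this holds exactly when, for every (code of a) first-order formula $\phi$ and every tuple $\vec a$ from $V_\alpha^M$ of the appropriate length, the set-sized satisfaction predicate of the structure $(V_\alpha^M,\in)$ --- which is first-order definable from the set $V_\alpha^M$ --- agrees with $\Tr$ about $\phi(\vec a)$. All quantifiers here range over sets, so by predicative comprehension in $(M,\Xcal)\models\GB$ the collection $C = \{\alpha : V_\alpha^M \prec M\}$ is a class in $\Xcal$; it then remains only to show that $C$ is closed and unbounded in $\Ord^M$.

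For closure I would use the Tarski--Vaught chain argument: if $\alpha$ is a limit of elements of $C$ then $V_\alpha^M = \bigcup_{\beta \in C\cap\alpha} V_\beta^M$ is a union of an $\in$-increasing chain of elementary submodels of $M$ and hence is itself one. For unboundedness, given $\gamma_0 \in \Ord^M$ I would build an increasing sequence $\gamma_0 < \gamma_1 < \cdots$ where $\gamma_{n+1}$ is least above $\gamma_n$ such that $V_{\gamma_{n+1}}^M$ contains a witness for every existential formula with parameters from $V_{\gamma_n}^M$ that $\Tr$ declares true; then $\alpha = \sup_n \gamma_n$ is a limit with $V_\alpha^M = \bigcup_n V_{\gamma_n}^M$, and the Tarski--Vaught test gives $V_\alpha^M \prec M$, so $\alpha \in C$ and $\alpha > \gamma_0$. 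The hypotheses are used precisely at the successor step, to see that such a $\gamma_{n+1}$ exists: the map sending a pair (code of $\phi$, tuple $\vec a$) to the least rank of a $\Tr$-witness for $\exists x\,\phi(x,\vec a)$ is a class function definable from $\Tr$, so by class replacement its restriction to the set of all such pairs with $\vec a$ from $V_{\gamma_n}^M$ has set image, whence the ranks of needed witnesses are bounded by an ordinal of $M$; and the $\omega$-length recursion assembling $\langle\gamma_n : n<\omega\rangle$ is legitimate in $\GB$, since $\GB$ proves transfinite recursion of \emph{set} length (even though not of class length).

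The point I expect to need the most care --- rather than a genuine obstacle --- is checking that the construction never leaves $(M,\Xcal)\models\GB$: that each witness-rank supremum $\gamma_{n+1}$ is really a set (this is exactly where one invokes class replacement on a $\Tr$-definable class function) and that the $\omega$-recursion collecting the $\gamma_n$ is available in $\GB$ and not merely in a stronger theory. Conceptually, the single idea doing all the work is that promoting $\Tr$ from a scheme to an honest class collapses the usual ``for each $n$, reflect $\Sigma_n$'' into one elementary assertion, and that is what forces the reflected rank initial segment to be fully elementary rather than merely $\Sigma_n$-elementary.
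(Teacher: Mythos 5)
Your argument is correct and is essentially the paper's proof: the paper simply cites L\'evy--Montague reflection applied to the expanded structure $(M,\in,\Tr)$ together with the observation that ``$V_\alpha^M \prec M$'' becomes a low-complexity assertion once $\Tr$ is available as a class, which is exactly the idea you implement. You have merely unwound that appeal to reflection into the explicit Tarski--Vaught closure construction, with the correct bookkeeping (predicative comprehension with parameter $\Tr$, class replacement for the witness-rank function, and set-length recursion in $\GB$) that the paper's two-line sketch leaves implicit.
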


\begin{proof}
By reflection we get club many $\alpha$ so that $(V_\alpha^M,\in, \Tr \cap V_\alpha^M)$ is a $\Sigma_1$-elementary submodel of $(M,\in, \Tr)$. But we can express that $(V_\alpha^M,\in)$ is an elementary submodel with a $\Sigma_1$-formula referring to the truth predicate.
\end{proof}
 
This result rules out, for instance, pointwise-definable models like the minimum transitive model (see \cite{hamkins:math-tea}) or models without worldly cardinals from being the sets for a truth potentialist system. Indeed, such models have no $T$-expansion for any $T$ strong enough to prove the existence of the truth predicate as a class.
\smallskip

We next present some results about smallest truth potentialist systems over a fixed $M$. First, however, let us clarify in what sense a potentialist system may be smallest among a collection of systems. One way to compare potentialist systems is by containment: if $\Afrak$ and $\Bfrak$ are potentialist systems then $\Afrak \subseteq \Bfrak$ if every world in $\Afrak$ is a world in $\Bfrak$. But this comparison doesn't get at the desired phenomenon. For instance, $\Afrak$ could consist of a single model of $\GB$ with an $n$-iterated truth predicate for each $n$, and $\Bfrak$ could be obtained from $\Afrak$ by refining its one world into many by adding $\omega$ many new worlds where the $n$-th world only has up to the $n$-iterated truth predicate. Then $\Afrak \subseteq \Bfrak$, but we wouldn't say that $\Afrak$ is smaller than $\Bfrak$.
To avoid this sort of issue: Say that $\Afrak$ \emph{covers} $\Bfrak$ if every world in $\Bfrak$ is contained in some world in $\Afrak$. If $\Afrak \subseteq \Bfrak$ and $\Afrak$ covers $\Bfrak$ then we say that $\Bfrak$ \emph{refines} $\Afrak$. A potentialist system is \emph{refined} relative to a collection of systems if it has no proper refinements within the collection. Given a collection of systems, the \emph{smallest} potentialist system in the collection, if it exists, is the refined system which is covered by every other system.

That clarification out of the way, let's see that there are smallest truth potentialist systems. We state this as three results for the three different variants of the definition.

\begin{theorem}\label{thm:smallest}
If $M$ admits a truth potentialist system then it admits a smallest truth potentialist system. This potentialist system validates $\Sfourthree$.
\end{theorem}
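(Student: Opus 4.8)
The plan is to write down the smallest truth potentialist system explicitly and then read off its modal logic from its shape.

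\textbf{Identifying the smallest system.} Let $\Afrak_M$ be the collection of all worlds $(M,\Ycal)$ where $\Ycal$ is a $\GB$-expansion for $M$ with $\Ycal \subseteq \Def(M)[\Tr_n]$ for some $n < \omega$. I claim this is the smallest truth potentialist system over $M$. First, I would check it is a truth potentialist system: condition (1) holds since $(M,\Def(M)) \models \GB$ for any $M \models \ZF$; condition (2) holds by construction; and for (3), if $A \in \Ycal \subseteq \Def(M)[\Tr_n]$, then $\Tr(A)$ is definable from $\Tr_{n+1}$ (the truth predicate relative to a class definable from $\Tr_n$ reduces, uniformly, to the truth predicate relative to $\Tr_n$, i.e.\ to $\Tr_{n+1}$), so $\Ycal[\Tr(A)] \subseteq \Def(M)[\Tr_{n+1}]$ and this is again a world of $\Afrak_M$. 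The one place the hypothesis is needed is to know each $\Def(M)[\Tr_n]$ actually satisfies $\GB$: if $M$ admits some truth potentialist system $\Bfrak$, then applying (3) repeatedly from $(M,\Def(M))$ — first to $\Tr(\emptyset) \equiv \Tr$, then to $\Tr(\Tr) \equiv \Tr_2$, and so on — puts each $(M,\Def(M)[\Tr_n])$ into $\Bfrak$, whence it satisfies $\GB$ by (2).

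\textbf{Minimality.} The same observation shows that \emph{every} truth potentialist system $\Bfrak$ contains $(M,\Def(M)[\Tr_n])$ for all $n$, so $\Bfrak$ covers $\Afrak_M$, since every world of $\Afrak_M$ sits inside some $\Def(M)[\Tr_n]$. And $\Afrak_M$ is refined within the collection of truth potentialist systems: if $\Bfrak$ refines $\Afrak_M$, then each world of $\Bfrak$ sits inside some world of $\Afrak_M$, hence inside some $\Def(M)[\Tr_n]$; but a world of a truth potentialist system is a $\GB$-expansion by (2), so it already belongs to $\Afrak_M$, forcing $\Bfrak = \Afrak_M$. Thus $\Afrak_M$ is covered by every truth potentialist system and is refined, so it is the smallest one.

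\textbf{The modal logic.} That $\Afrak_M$ is directed is immediate: two worlds contained in $\Def(M)[\Tr_{n_1}]$ and $\Def(M)[\Tr_{n_2}]$ are both extended by $(M,\Def(M)[\Tr_n])$ for $n = \max(n_1,n_2)$, which lies in $\Afrak_M$. Directedness gives the $\mathsf{.2}$ axiom, so $\Sfourtwo$ is validated. For the $\mathsf{.3}$ axiom: given $\possible\phi$ and $\possible\psi$ at a world $(M,\Ycal)$, witnessed at $w_1 \models \phi$ and $w_2 \models \psi$ above $(M,\Ycal)$, it suffices to show that one of $w_1, w_2$ already satisfies $(\phi \land \possible\psi) \lor (\possible\phi \land \psi)$, i.e.\ that $w_1 \models \possible\psi$ or $w_2 \models \possible\phi$; the plan is to exploit that the iterated truth predicates $\langle \Tr_n\rangle$ form a linearly ordered, $\subseteq$-cofinal tower and that which classes lie in a world of $\Afrak_M$ is ultimately governed by how far up this tower the world reaches, so that the two witnesses can be merged by climbing the tower.

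\textbf{The main obstacle.} The crux is exactly this $\mathsf{.3}$ step. The frame $\Afrak_M$ is \emph{not} linear — at each level $\Def(M)[\Tr_n]$ there sit many pairwise $\subseteq$-incomparable $\GB$-expansions (for instance $\Def(M)$ together with a single partial-truth class strictly between $\Def(M)$ and $\Def(M)[\Tr]$) — so $\mathsf{.3}$ cannot be inferred from a frame condition and must be verified by a model-theoretic argument. The point to establish is that this incomparability is invisible to the class-theoretic formulas that may be substituted for the propositional variables: the parameters allowed in such a formula evaluated at $(M,\Ycal)$ come from $\Ycal$, which is too weak to name the extra classes that appear only in incomparable extensions of $(M,\Ycal)$, so any instance of $\possible\phi \land \possible\psi$ is in fact witnessed cofinally along the linear tower of truth predicates, from which $\mathsf{.3}$ (and hence the validity of all of $\Sfourthree$) follows. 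This is the same phenomenon that makes the modal logic of collapse forcing equal to $\Sfourthree$ despite that frame being non-linear, and adapting that style of homogeneity argument to the present setting is where the real work lies.
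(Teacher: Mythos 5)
There is a genuine gap, and it is located exactly where you say ``the real work lies'': the validity of $\mathsf{.3}$ for your candidate system is never proved, and that is the entire content of the theorem's second sentence. The gap is an artifact of your choice of candidate. The paper's smallest system is not the collection of \emph{all} $\GB$-expansions lying beneath some $\Def(M)[\Tr_n]$; it is just the chain of worlds $\Xcal_n = \Def(M)[\Tr_n]$, $n < \omega$, generated from the base world by repeatedly applying the truth-predicate rule (using that $\Tr(A)$ for $A \in \Xcal_n$ is definable from $\Tr_{n+1}$, the same reduction you use). For that system the induction you give shows it is contained in, hence covered by, every truth potentialist system, and since it is \emph{linearly ordered} by inclusion, the validity of $\Sfourthree$ is immediate from the frame condition---no homogeneity or merging argument is needed at all. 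By widening the system to include every $\GB$-expansion under the tower you manufacture a possibly non-linear frame and are then forced to attempt a collapse-forcing-style argument that you do not carry out; directedness only gives you $\Sfourtwo$, and nothing in your sketch rules out a failure of $\mathsf{.3}$ witnessed by a formula (with parameters from the evaluating world) separating incomparable intermediate worlds.

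Two further points. First, your evidence that the wider system is genuinely non-linear is faulty: a ``partial-truth class'' such as a $\Sigma_k$-truth predicate is first-order definable over $M$, hence already lies in $\Def(M)$ and is not strictly between $\Def(M)$ and $\Def(M)[\Tr]$. Whether $\GB$-expansions strictly intermediate between consecutive tower worlds exist is a nontrivial question you neither settle nor need: if they do not exist your system collapses to the chain and your ``main obstacle'' is vacuous, and if they do exist you have not proved $\mathsf{.3}$ for the resulting frame. Second, your minimality discussion is fine as far as it goes (covering by every system via the induction placing each $(M,\Def(M)[\Tr_n])$ in every system is exactly the paper's argument, and $\GB$ passes down to subcollections beneath a tower world), but your claim that your $\Afrak_M$ is refined is essentially a restatement of its definition and does not engage with the fact that the refined, covered-by-everything system the paper exhibits---and for which the modal claim is a one-liner---is the bare chain. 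To repair the proof, take the chain $\{(M,\Def(M)[\Tr_n]) : n<\omega\}$ as the smallest system, verify the three conditions and the covering/refinement claims as you began to do, and read off $\Sfourthree$ from linearity.
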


\begin{theorem}\label{thm:extendedsmallest}
Fix a length $\Lambda \in \Def(M)$ where $\Lambda$ is well-founded.
If $M$ admits a length $\mathord{<}\Lambda$ truth potentialist system then it admits a smallest one. 
Every axiom of $\Sfourthree$ is valid in this smallest potentialist system. Moreover if either $\Lambda \ge \omega^2$ is closed under addition $\mathord{<}\Lambda$ or $\Lambda\cdot\omega$ is closed under addition $\mathord{<}\omega^2$ then the modal validities are precisely $\Sfourthree$.
\end{theorem}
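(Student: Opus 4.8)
The plan is to exhibit the smallest system explicitly and then read off its modal logic. Write $S_\Lambda$ for the collection of order types of finite sums $\xi_1 + \cdots + \xi_k$ with each $\xi_i < \Lambda$ (including the empty sum $0$, with $\Tr_0$ the empty predicate); then $S_\Lambda$ is closed under ordinal addition and downward closed under $\le$, so it is an initial segment of the ordinals. Let $\Afrak_\Lambda$ be the system whose worlds are the structures $(M,\Def(M)[\Tr_\gamma])$ for $\gamma \in S_\Lambda$, ordered by $\subseteq$. First I would verify that $\Afrak_\Lambda$ is a length ${<}\Lambda$ truth potentialist system: $(M,\Def(M)) = (M,\Def(M)[\Tr_0])$ gives condition (1); each $\Def(M)[\Tr_\gamma]$ is a $\GB$-expansion by the definition of $\Xcal[A]$, giving (2); and if $\gamma \in S_\Lambda$, $\xi < \Lambda$, and $A \in \Def(M)[\Tr_\gamma]$, then, since $A$ is definable from $\Tr_\gamma$, the inter-definability facts for iterated and relativized truth predicates give $\Def(M)[\Tr_\gamma][\Tr_\xi(A)] = \Def(M)[\Tr_\gamma][\Tr_\xi(\Tr_\gamma)] = \Def(M)[\Tr_{\gamma + \xi}]$ with $\gamma + \xi \in S_\Lambda$, giving $(3_\Lambda)$. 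For minimality, I would observe that if $\Bfrak$ is any length ${<}\Lambda$ truth potentialist system for $M$, then a transfinite induction along $S_\Lambda$ shows $\Tr_\gamma$ exists over $M$ and $(M,\Def(M)[\Tr_\gamma]) \in \Bfrak$ for every $\gamma \in S_\Lambda$: for $\gamma > 0$ strip off the last nonzero summand, $\gamma = \delta + \xi$ with $\delta \in S_\Lambda$, $\delta < \gamma$, $\xi < \Lambda$, and apply $(3_\Lambda)$ with parameter $\Tr_\delta$ to the $\Bfrak$-world $(M,\Def(M)[\Tr_\delta])$. Hence $\Afrak_\Lambda \subseteq \Bfrak$, so $\Afrak_\Lambda$ is covered by every length ${<}\Lambda$ truth potentialist system and has no proper refinement among them; it is the smallest one.

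For the second assertion, note that $\Afrak_\Lambda$ is linearly ordered: for $\gamma \le \delta$ in $S_\Lambda$, $\Tr_\gamma$ is, up to recoding, an initial segment of $\Tr_\delta$, hence $\Def(M)[\Tr_\gamma] \subseteq \Def(M)[\Tr_\delta]$. Since a reflexive, transitive, linearly ordered frame validates $\Sfourthree$, every axiom of $\Sfourthree$ is valid at every world of $\Afrak_\Lambda$.

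For the precise computation under the extra hypotheses I would build a long ratchet and invoke Theorem~\ref{thm:ratchets}. Put $\widehat\Lambda = \sup S_\Lambda$, so $S_\Lambda = \{\xi : \xi < \widehat\Lambda\}$ and $\widehat\Lambda$ is additively closed; an ordinal-arithmetic check shows each hypothesis forces $\widehat\Lambda$ to be closed under addition of ordinals ${<}\omega^2$ ($\widehat\Lambda = \Lambda \ge \omega^2$ in the first case, while in the second $\widehat\Lambda$ equals $\Lambda$ or $\Lambda\cdot\omega$ according as $\Lambda$ is additively indecomposable or not, and the hypothesis on $\Lambda\cdot\omega$ supplies the closure; the degenerate case $\Lambda = \omega$, where $\Afrak_\omega$ is order-isomorphic to $(\omega,\le)$, is handled separately, its modal logic being $\Sfourthree$). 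For $\xi < \widehat\Lambda$ let $r_\xi$ be the assertion ``there is a $\xi$-iterated truth predicate over $M$'', uniformly definable in the class-well-order parameter $\widehat\Lambda \rest \xi$. Each $r_\xi$ is a button, since from any world $(M,\Def(M)[\Tr_\gamma])$ one reaches $(M,\Def(M)[\Tr_{\gamma + \xi}])$ after finitely many applications of the extension rule, after which $r_\xi$ holds necessarily; pushing $r_\xi$ pushes $r_\eta$ for $\eta < \xi$ because a world containing $\Tr_\xi$ contains its restriction $\Tr_\eta$; and in no world are all $r_\xi$ pushed, since at $(M,\Def(M)[\Tr_\gamma])$ the button $r_{\gamma + 1}$ is unpushed: $\GB$ $+$ ``$\Tr_{\gamma + 1}$ exists'' proves the consistency of $\GB$ $+$ ``$\Tr_\gamma$ exists'', so $\Tr_{\gamma + 1} \notin \Def(M)[\Tr_\gamma]$ by the second incompleteness theorem. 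Thus the $r_\xi$ form a long ratchet of length $\widehat\Lambda$, closed under addition of ordinals ${<}\omega^2$, fully pushed in no world; Theorem~\ref{thm:ratchets} then bounds the modal validities at every world within $\Sfourthree$, and with the lower bound they are exactly $\Sfourthree$.

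The main obstacle is the bookkeeping in the first paragraph --- making the inter-definability identities for iterated and relativized truth predicates precise enough to guarantee that $\Xcal[\Tr_\xi(A)]$ is always again one of the worlds $\Def(M)[\Tr_{\gamma + \xi}]$ --- together with the routine-but-delicate ordinal arithmetic in the third paragraph relating the two hypotheses to the closure condition of Theorem~\ref{thm:ratchets}, and the separate handling of $\Lambda = \omega$.
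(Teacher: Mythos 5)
Your overall route is the paper's: you exhibit the smallest system explicitly as the linearly ordered family of worlds $\Def(M)[\Tr_\gamma]$, with the index set being the closure of $\Lambda$ under finite sums (the paper phrases this as the case split between taking indices $<\Lambda$ when $\Lambda$ is closed under addition $\mathord{<}\Lambda$ and indices $<\Lambda\cdot\omega$ otherwise, which is the same set as your $S_\Lambda$), you get minimality by an induction repeatedly applying clause $(3_\Lambda)$ starting from the common base world $(M,\Def(M))$, you get validity of $\Sfourthree$ from linearity, and you get the upper bound from the long ratchet $r_\xi =$ ``$\Tr_\xi$ exists'' via Theorem~\ref{thm:ratchets}. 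So the decomposition and the key devices coincide with the paper's proof. However, several individual steps do not hold up as written.

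First, the identity $\Def(M)[\Tr_\gamma][\Tr_\xi(A)] = \Def(M)[\Tr_\gamma][\Tr_\xi(\Tr_\gamma)]$ is false in general: take $A=\emptyset$ and $0<\xi\le\gamma$; then the left side is just $\Def(M)[\Tr_\gamma]$ (since $\Tr_\xi(\emptyset)$ is inter-definable with $\Tr_\xi$, which is an initial segment of $\Tr_\gamma$), while the right side is $\Def(M)[\Tr_{\gamma+\xi}]$, which is strictly larger by Tarski. What is true, and what the paper actually uses, is only that $\Tr_\xi(A)$ is definable from $\Tr_{\gamma+\xi}$ whenever $A$ is definable from $\Tr_\gamma$, so the required truth predicate is found in the world indexed by $\gamma+\xi$. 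Second, ``each $\Def(M)[\Tr_\gamma]$ is a $\GB$-expansion by the definition of $\Xcal[A]$'' is not a justification: definable closure yields extensionality and predicative comprehension but not class replacement. The paper obtains class replacement from coveredness---each $\Xcal_\gamma$ sits inside (indeed, by your own induction, literally is) a world of the system assumed to exist, and a failure of class replacement would persist upward---so that induction has to be run before, not after, declaring that the worlds satisfy $\GB$. Third, ``smallest'' requires being refined as well as being covered by every other system; containment in every system does not by itself exclude proper refinements, and you omit the paper's further induction showing that any system covered by $\Tfrak$ has all its worlds among the $\Xcal_\gamma$. Finally, two smaller points: the non-definability of $\Tr_{\gamma+1}$ from $\Tr_\gamma$ should be argued via Tarski's undefinability theorem, not via the second incompleteness theorem (consistency-strength facts about theories do not rule out definability inside the fixed model $M$); and your deferred ``degenerate case'' $\Lambda=\omega$ is not actually handled---there the ratchet has length $\omega$, Theorem~\ref{thm:ratchets} does not apply, and no argument is given that the validities of that $\omega$-chain are exactly $\Sfourthree$.
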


\begin{theorem} \label{thm:smallest-fujimoto}
Suppose $M$ admits an unbounded truth potentialist system whose every world is a $\beta$-model. Then $M$ admits a smallest unbounded truth potentialist system, and this potentialist system validates exactly $\Sfourthree$.
\end{theorem}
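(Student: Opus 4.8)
The plan is to run the same construction as in Theorems \ref{thm:smallest} and \ref{thm:extendedsmallest}, but now tracking the subtlety that comes with allowing class well-orders of unbounded length over $M$. First I would define the candidate smallest system: let $\Afrak$ consist of all worlds $(M,\Xcal)$ where $\Xcal$ is a $\GB$-expansion of $M$ of the form $\Def(M)[\Tr_{\Gamma_0}(A_0), \dots, \Tr_{\Gamma_{k-1}}(A_{k-1})]$ for finitely many class well-orders $\Gamma_i$ and parameters $A_i$ previously obtained in the system, i.e.\ $\Afrak$ is the closure of $\{(M,\Def(M))\}$ under condition (3) for iterated truth predicates of arbitrary (well-founded) length. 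The first thing to check is that $\Afrak$ actually is an unbounded truth potentialist system: each world satisfies $\GB$ (this is where $\Xcal[A]$ being the \emph{smallest} $\GB$-expansion is used), $(M,\Def(M))$ is a world, and closure under the extension rule is immediate from the definition. Then I would verify that $\Afrak$ is covered by any other unbounded truth potentialist system $\Bfrak$ over $M$: by induction on the finite iteration of the extension rule, every world of $\Afrak$ is contained in some world of $\Bfrak$, using that $\Bfrak$ contains $(M,\Def(M))$ and is closed under the same rule. Finally, refinedness of $\Afrak$ within the collection: any proper refinement would have to split some world of $\Afrak$ into strictly smaller worlds while still being closed under adding arbitrary-length iterated truth predicates, and one argues as in the earlier theorems that this is impossible because the defining data (finitely many truth predicates along definable well-orders) cannot be further decomposed inside a $\GB$-expansion. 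This establishes that $\Afrak$ is the smallest unbounded truth potentialist system.

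For the modal logic, the lower bound $\Sfourthree$ should come essentially for free from the earlier work: the iterated truth predicates furnish a long ratchet (pushing the button "$\Tr_\xi$ exists" for larger $\xi$ pushes all earlier ones, and no world contains truth predicates of all lengths since the lengths available are unbounded), and since the ratchet has unbounded length it is in particular closed under addition of ordinals $\mathord{<}\omega^2$, so every world satisfies the $\Sfourthree$ validities by Theorem \ref{thm:ratchets}—and the axioms of $\Sfourthree$ are valid by the linearity-flavoured argument already used for Theorem \ref{thm:smallest} (any two extensions by truth predicates of lengths $\Gamma, \Gamma'$ can be amalgamated into the extension by a truth predicate of length $\max(\Gamma,\Gamma')$ suitably relativised, using that $\Tr_\Gamma(\Tr_\Lambda)$ is inter-definable with $\Tr_{\Lambda+\Gamma}$). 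The matching upper bound is where the $\beta$-model hypothesis does its work: to apply Theorem \ref{thm:ratchets} we need the buttons "$\Tr_\xi$ exists" to behave correctly, and in particular we need that no world mistakenly thinks it has a $\Tr_\Gamma$ for an ill-founded $\Gamma$—this is exactly $\beta$-correctness. Without it, $\Ord^M$ of countable cofinality could allow a world with an "iterated truth predicate" along a nonstandard order, and the ratchet structure (and hence the upper bound) would break, just as in the discussion of $\beta$-models preceding the theorem.

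I expect the main obstacle to be the upper bound, specifically confirming that the long ratchet hypotheses of Theorem \ref{thm:ratchets} are genuinely met in the unbounded setting and that $\beta$-correctness is exactly what rescues the argument. Two points need care. First, in the unbounded case the "length" of the ratchet is not a fixed well-order but ranges over all well-founded class orders definable over $M$; one must check that for \emph{any} world $(M,\Xcal)$ in $\Afrak$ there is a definable well-order $\Gamma$, long enough and closed under addition $\mathord{<}\omega^2$, such that the ratchet $\langle r_\xi : \xi < \Gamma\rangle$ given by "$\Tr_\xi(A)$ exists" (for appropriate base parameter $A$ coding the finitely many classes of $\Xcal$) is not fully pushed at $(M,\Xcal)$—this is where one invokes that $\Xcal$ is generated by finitely many truth predicates of bounded length together with $\beta$-correctness to prevent the ratchet from being "secretly finished." Second, one must confirm the ratchet is independent of the other control structure (there is essentially none here beyond the ratchet itself, since the systems are linearly-ish ordered), and that the conclusion of Theorem \ref{thm:ratchets}—modal validities contained in $\Sfourthree$ at worlds where the ratchet is not fully pushed—applies at \emph{every} world of $\Afrak$, which follows because every world fails to have truth predicates of unbounded length. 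Combined with the $\Sfourthree$ lower bound, this pins the modal validities down to exactly $\Sfourthree$.
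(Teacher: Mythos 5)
Your proposal follows essentially the same route as the paper's, which proves Theorems \ref{thm:smallest}--\ref{thm:smallest-fujimoto} by a single construction: the smallest system is generated from $(M,\Def(M))$ by adding iterated truth predicates (collapsed to a single $\Tr_\Gamma$ via the inter-definability of $\Tr_\Gamma(\Tr_\Lambda)$ with $\Tr_{\Lambda+\Gamma}$), coverage and refinedness are checked by induction against an arbitrary system, the lower bound comes from the system being linearly ordered, and the upper bound from the long-ratchet theorem (Theorem \ref{thm:ratchets}) applied to the buttons ``$\Tr_\xi$ exists''. One small re-emphasis: in the paper the $\beta$-model hypothesis is credited with making the meta-theoretic inductions along the class well-orders go through---so it underwrites the construction, the smallestness argument, and the linearity, not only the ratchet upper bound as you localise it---though your underlying point, that it bars ill-founded ``well-orders'' and their pseudo-iterated truth predicates from appearing in any world, is the same mechanism (and note also that Theorem \ref{thm:ratchets} yields the \emph{upper} bound, not the lower bound as your opening phrasing briefly suggests before you correct course).
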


Let us remark that the condition on $M$ is satisfied if $M$ has an expansion to a $\beta$-model of $\GB + \ETR$, by considering the trivial potentialist system consisting of just that one world.
\smallskip

These three theorems come from the same construction, with slight tweaks for each case, so we will only give it once, for Theorem~\ref{thm:extendedsmallest}. The reader can easily adjust the proof to give the other two theorems.

\begin{proof}
We will build the smallest $\mathord{<}\Lambda$ truth potentialist system, call it $\Tfrak$. The key point is that $\Tr_\xi(\Tr_\eta)$ is inter-definable with $\Tr_{\eta+\xi}$, so if we build up from the base world $\Xcal_0 = \Def(M)$ we only have to add $\Tr_\xi$ for enough $\xi$. Define $\Tfrak$ to consist of worlds $\Xcal_\xi$, where $\Xcal_0 = \Def(M)$ and, more generally, $\Xcal_\xi = \Xcal_0[\Tr_\xi]$. We want to take the shortest segment of $\xi$s which makes this work. For this, we must split into cases. If $\Lambda$ is closed under addition $\mathord{<}\Lambda$ then we take all $\xi < \Lambda$ for $\Xcal_\xi$ to be in $\Tfrak$. Else, we take all $\xi < \Lambda \cdot \omega$. We claim that $\Tfrak$ is the smallest $\mathord{<}\Lambda$ truth potentialist system. 

It is immediate by construction that each world satisfies all axioms of $\GB$ except possibly the class replacement axiom. Consider now any $\mathord{<}\Lambda$-truth potentialist system $\Afrak$ over $M$. An easy induction on $\xi$ shows that each $\Xcal_\xi$ is contained in a world in $\Afrak$. This establishes that $\Tfrak$ is covered by $\Afrak$ and thus each $(M,\Xcal_\xi)$ does indeed satisfy class replacement; after all, a failure of class replacement in a smaller collection of classes for $M$ would also be a failure in a larger collection of classes. A similar induction shows that if $\Tfrak$ covers $\Afrak$ then each world in $\Afrak$ is one of the $\Xcal_\xi$. Thus, $\Tfrak$ is refined.
Finally, that we can always find iterated truth predicates in a larger world is the key observation above. To find $\Tr_\xi(A)$ for a class $A \in \Xcal_\eta$, look in the world $\Xcal_{\eta+\xi}$; this world has $\Tr_{\eta+\xi}$ and, using that $A$ is definable from $\Tr_\eta$, we can use this longer truth predicate to define $\Tr_\xi(A)$.

To see that $\Tfrak$ validates $\Sfourthree$, merely observe that $\Tfrak$ is linearly ordered and recall that linear orders validate $\Sfourthree$. To get the upper bound, we will use Theorem~\ref{thm:ratchets} and demonstrate a long ratchet for this potentialist system. Let $r_\xi$ be the assertion ``$\Tr_\xi$ exists''. Then $\seq{ r_\xi : \xi < \Lambda^* }$ gives a long ratchet for this potentialist system, where $\Lambda^*$ is either $\Lambda$ or $\Lambda \cdot \omega$ depending on which case we are in for constructing $\Tfrak$. The assumptions on $\Lambda$ ensure that $\Lambda^*$ is closed under addition $\mathord{<}\omega^2$, allowing the theorem to apply.
\end{proof}

\begin{remark}
For Theorems~\ref{thm:extendedsmallest} and \ref{thm:smallest-fujimoto} the purpose of the assumptions about $\Lambda$ being well-founded and about $\beta$-models is to allow the inductions on $\xi$ to go through. These inductions take place in the meta-theory, so we need the $\xi$ to be well-founded as seen from the meta-theory.
\end{remark}

We next address potentialist systems with worlds that satisfy $\GBC$---that is, worlds that additionally have a global well-order. We emphasise that the previous results hold for any transitive $M$, whereas for results about global well-orders we will use more specific assumptions. The two cases we will focus on is when $M$ is countable and when $M = V_\kappa$ for some inaccessible cardinal $\kappa$. These two cases correspond to two possible approaches to class potentialism; see discussion in \S\ref{bottom-up} and \S\ref{top-down}.

There are a few ways one might ensure a potentialist system gives worlds which satisfy global choice. 
If $M$ has a definable global well-order then in any truth potentialist system for $M$ every world satisfies $\GBC$. So we get truth-potentialist systems which validate exactly $\Sfourthree$ and whose every world satisfies $\GBC$. However, this puts a firm restriction on the first-order theory of $M$, namely $M \models \exists x\ V = \HOD(\{x\})$.\footnote{This is like the well-known fact \cite{mcaloon1971} that $V = \HOD$ is equivalent to having a parameter-free definable global well-order; the only difference with $V = \HOD(\{x\})$ is that $x$ gets used as a parameter in the definition of the global well-order.}
One may very well think that this extra restriction is unwarranted, and so consider the general case. Here we can get different behaviour.

Here are two approaches. First, we could start with a base world that satisfies $\GBC$. If this base world is to be as small as possible, it must be of the form $(M,\Def(M,G))$, where $G$ is a global well-order for $M$. With this approach every world in the potentialist system will satisfy $\GBC$. Alternatively, we could add a new rule saying that we can expand to a larger world to add a global well-order. Some worlds, such as the base world of definable classes, may not satisfy global choice, but you can always extend to a world satisfying $\GBC$.

For the first approach, the way to formulate this is to replace condition $(1)$ in the definition of a truth potentialist system with the following.
\begin{enumerate}[(1${}_G$)]
\item There is a world of the form $(M,\Def(M,G))$ where $G$ is a global well-order for $M$, and all worlds extend this base world.
\end{enumerate}

To start let's see that some (though not all) global well-orders are inter-definable with Cohen-generic classes of ordinals. For one direction, if $C \subseteq \Ord$ is Cohen-generic then, by density, every set is coded into the bit pattern of $C$. So we can define a global well-order by comparing where sets are first coded. Given such a global well-order we can rearrange it so that it has the ordinals, in increasing order, placed precisely on the indices where $C$ has value $1$. This rearrangement is still definable from $C$, and notice that if we have such a global well-order we can recover $C$ by looking at the indices for where ordinals appear. So the two are inter-definable. Indeed, we can say more about generic global well-orders. There is a natural class forcing to add a global well-order without adding new sets. Namely, let $\Qbb$ consist of set-sized well-orders, ordered by end-extension. By density a generic $H$ for $\Qbb$ will have all of $V$ as its domain, and it doesn't add sets because $\Qbb$ is $\mathord{<}\kappa$-closed for every $\kappa$. Given such $H$ we can define a Cohen-generic $C$ by putting $i \in C$ if and only if the $i$-th element of $H$ is an ordinal. Thus, the two forcings are forcing equivalent.

Starting with a global well-order in the base world can affect the modal validities. This phenomenon happens in both the countable case and the $V_\kappa$ case. For the sake of readability we state it as two theorems.

\begin{theorem} \label{thm:s42}
Let $M$ be a countable transitive model of $\ZFC$ and let $\Lambda$ be a limit-length well-order over $M$.
Suppose that $M$ is the first-order part of a model of $\GB + \ETR(\Lambda)$.
There are $\mathord{<}\Lambda$ truth potentialist systems, but modified to require a global well-order in the base world, over $M$ whose modal validities are precisely $\Sfourtwo$, where we allow parameters. In particular, $\mathsf{.3}$ is invalid for these systems.
\end{theorem}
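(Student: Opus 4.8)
The plan is to exhibit such a system concretely and then bound its modal logic from both sides; the delicate half will be verifying $\mathsf{.2}$ on a frame which, by the Mostowski theorem above, is \emph{not} directed.

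\emph{The system.} Using that $M$ is countable, fix $C \subseteq \Ord^M$ Cohen-generic for $\Add(\Ord,1)^M$ over $(M,\Def(M))$, chosen generic enough to meet every dense subclass of $\Add(\Ord,1)$ lying in any of the countably many worlds we shall construct. As in the discussion preceding the theorem, $C$ is inter-definable with a global well-order $G$; and the hypothesis that $M$ is the first-order part of a model of $\GB + \ETR(\Lambda)$ guarantees that for each $\xi < \Lambda$ and each class $A$ the iterated truth predicate $\Tr_\xi(A)$ exists and that the expansions we form satisfy $\GB$ (in particular class replacement). Let $\Sfrak$ be the smallest $\mathord{<}\Lambda$ truth potentialist system over $M$ with base world $(M,\Def(M,G))$ which in addition is closed under adjoining fresh Cohen-generic classes of ordinals: if $(M,\Xcal)$ is a world and $D$ is Cohen-generic over $(M,\Xcal)$, then so is $(M,\Xcal[D])$. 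Thus every world of $\Sfrak$ has the form $(M,\Def(M,G,D_1,\dots,D_k,\Tr_{\xi_1}(A_1),\dots,\Tr_{\xi_m}(A_m)))$ for finitely many generics $D_i$ over successive worlds and $\xi_j < \Lambda$; each such world individually satisfies $\GB$ since $\Add(\Ord,1)$ adds no sets. By the Mostowski theorem this system is not directed, so $\mathsf{.2}$ cannot be obtained from a frame condition.

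\emph{Upper bound.} I apply Theorem~\ref{thm:buttons-and-switches}, so I must produce arbitrarily large finite independent families of buttons and switches. For the buttons, split $C$ into $\omega$-many mutually $\Add(\Ord,1)$-generic pieces $C_0,C_1,\dots$, all lying already in the base world, and let $b_i$ assert ``$\Tr(C_i)$ exists''. Each $b_i$ is a button: from any world one may pass to a larger one containing $\Tr(C_i)$ (as $C_i$ is in the base), after which it holds permanently; and the $b_i$ are independent, since by mutual genericity adjoining some $\Tr(C_i)$ never computes $C_j$ or $\Tr(C_j)$ for $j\neq i$. (Note that it is precisely the presence of a generic global well-order in the base world that makes these buttons genuinely independent: over $\Def(M)$ the only definable parameters $A$ give $\Tr(A)$ inter-definable with $\Tr^M$, so ``$\Tr(A)$ exists'' would be a single button.) Independent switches are obtained in the analogous way from the Cohen-generic extension rule, via statements recording prescribed finite behaviour of the next adjoined Cohen-generic class on a marked block of coordinates; the two families can be chosen independent of one another. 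Hence by Theorem~\ref{thm:buttons-and-switches} the modal validities at every world of $\Sfrak$ lie within $\Sfourtwo$; in particular the independent buttons already refute $\mathsf{.3}$ (push $b_0$ but not $b_1$, versus $b_1$ but not $b_0$).

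\emph{Validity of $\mathsf{.2}$, and the main obstacle.} Fix a world $(M,\Xcal)$ and a class-theoretic $\phi$ with parameters from $\Xcal$ with $(M,\Xcal)\models\possible\necessary\phi$, say witnessed by $(M,\Ycal)\supseteq(M,\Xcal)$ with $(M,\Ycal)\models\necessary\phi$; I claim $(M,\Xcal)\models\necessary\possible\phi$. The key point—and the hard part of the proof—is that the adjoined Cohen-generic classes cannot be responsible for making $\phi$ permanently true: since they are generic over the whole (countable) system, the homogeneity of $\Add(\Ord,1)$ lets one swap one such generic for another without disturbing permanent truths, so a density argument shows that whenever $\possible\necessary\phi$ holds it is already witnessed by one of the linearly ordered ``backbone'' worlds $\Ucal_\eta = \Def(M,\Tr_\eta(G))$; fix $\eta < \Lambda^*$ (with $\Lambda^*$ as in the proof of Theorem~\ref{thm:extendedsmallest}) with $(M,\Ucal_\eta)\models\necessary\phi$. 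Now let $(M,\Wcal)\supseteq(M,\Xcal)$ be arbitrary. Since $\Wcal$ contains $G$ and is finitely generated, applying the truth-extension rule to a truth predicate covering the generators of $\Wcal$ together with $G$ produces a world $(M,\Wcal')\supseteq(M,\Wcal)$ containing $\Tr_\eta(G)$, hence $(M,\Wcal')\supseteq(M,\Ucal_\eta)$ and so $(M,\Wcal')\models\phi$. Thus $(M,\Wcal)\models\possible\phi$, and as $\Wcal$ was arbitrary, $(M,\Xcal)\models\necessary\possible\phi$. Combined with the upper bound, the modal validities of every world of $\Sfrak$ are exactly $\Sfourtwo$. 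The genericity/homogeneity reduction to the backbone is the crux: it is the analogue here of the Hamkins--L\"owe analysis by which the modal logic of forcing validates $\mathsf{.2}$ despite the Mostowski non-directedness, it is where the countability of $M$ and the choice of a sufficiently generic $C$ are essential, and its proof runs parallel to—but in the opposite direction from—the construction behind Theorem~\ref{thm:killing-truth}.
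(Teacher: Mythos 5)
Your construction is not the one the theorem calls for, and the step you flag as the crux---the validity of $\mathsf{.2}$---is not just unproven but false for the system you build. By closing off under adjoining arbitrary Cohen-generic classes over \emph{every} world, you import exactly the truth-killing phenomenon of Theorem~\ref{thm:killing-truth}: taking $A=G$ there, there is a Cohen-generic $D$ over the base world $(M,\Def(M,G))$ such that no $\GB$-expansion of $M$ contains both $D$ and $\Tr(G)$. Your frame then contains the world $(M,\Def(M,G)[D])$, at which ``$\Tr(G)$ exists'' is impossible, while at the base world that same statement is possibly necessary; so $\mathsf{.2}$ fails with parameter $G$, exactly as in the corollary following Theorem~\ref{thm:killing-truth}. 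The homogeneity/``swap the generic'' reduction to a backbone cannot rescue this: the adjoined generics are not interchangeable with respect to permanent truths, because a truth-killing generic permanently forbids a truth predicate that another generic permits---homogeneity of $\Add(\Ord,1)$ gives no control over how a particular generic threads the dense classes definable from the truth predicate. Relatedly, your system is not a $\mathord{<}\Lambda$ truth potentialist system in the sense of Definition~\ref{def:truth-pot}: condition (3${}_\Lambda$) requires $(M,\Xcal[\Tr_\xi(A)])$ to be a world for every world $\Xcal$ and $A\in\Xcal$, but for $\Xcal=\Def(M,G)[D]$ and $A=G$ that expansion violates class replacement, so either condition (2) or (3${}_\Lambda$) breaks. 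Your buttons also stop being buttons in this enlarged frame (after a killing generic relative to $C_i$, ``$\Tr(C_i)$ exists'' is no longer possibly necessary), and the switches about ``the next adjoined generic'' are not expressible statements of class theory evaluated at a world.

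The paper's proof sidesteps all of this by adding only one generic, at the base. Work inside a countable $(M,\Ycal)\models\GB+\ETR(\Lambda)$, find $C$ Cohen-generic over it (countably many dense classes; tame class forcing preserves $\ETR(\Lambda)$), set $\Xcal_0=\Def(M,C)$, and let the worlds be exactly the $\Xcal_0[\Tr_\xi(A)]$ for $\xi<\Lambda$ and $A\in\Xcal_0$; all of these are coded in $(M,\Ycal[C])$, hence satisfy $\GB$. This system \emph{is} directed---any two worlds lie inside $\Def(M,\Tr_\xi(C))$ for large enough $\xi$---so the $\Sfourtwo$ lower bound is immediate; you were misled by the Mostowski discussion into thinking non-directedness was unavoidable and then had to argue against it. The upper bound is your buttons-and-switches idea done carefully: split $C$ into mutually generic pieces $C_i$, and at a given world let $\mu$ bound the lengths of iterated truth predicates over the $C_i$ already present; buttons are ``$\Tr_{\mu+1}(C_i)$ exists'' and switches concern the parity of the largest $\xi$ with $\Tr_\xi(C_i)$ in the world, independent by mutual genericity. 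The failure of $\mathsf{.3}$ then follows as you indicate.
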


\begin{proof}
Consider $(M,\Ycal) \models \GB + \ETR(\Lambda)$ where $\Ycal$ is countable.
Force over this model to add a Cohen generic $C$. If $M$ is countable such exists because there are only countably many dense sets to meet. If $M$ is a $V_\kappa$, note that $\Add(\Ord,1)^M = \Add(\kappa,1)$ is $\mathord{<}\kappa$-closed and since there are only $\kappa$ many dense sets in $\Ycal$ we can meet them to find a generic.
Gitman and Hamkins showed that tame class forcing, such as Cohen forcing, preserves $\ETR(\Lambda)$ \cite[Theorem 16]{HamkinsWoodin2018}.

Consider the truth-potentialist system $\Tfrak$ starting with the base world $(M,\Xcal_0)$, where $\Xcal_0 = \Def(M,C)$, and closing off under the requirement that any world can be extended by adding a $\xi$-iterated truth predicate relative to a parameter for any $\xi < \Lambda$. That is, $\Tfrak$ consists of worlds of the form $\Xcal_0[\Tr_\xi(A)]$ where $A \in \Xcal_0$.
We can meet this requirement to obtain a truth potentialist system, as all worlds reachable in this way are coded in $(M,\Ycal[C])$, and so all worlds must satisfy $\GB$.

We claim this truth-potentialist system has precisely $\Sfourtwo$ as its modal validities. For the lower bound, it suffices to observe it is directed: if $(M,\Xcal_0)$ and $(M,\Xcal_1)$ are two worlds, then they both are contained within $(M,\Def(M,\Tr_\xi(C)))$ for some large enough $\xi < \Lambda$. 

For the upper bound, by Theorem~\ref{thm:buttons-and-switches} it is enough to show that there are arbitrarily large families of independent buttons and switches. To do this, recall that $\Add(\Ord,1)$ is equivalent to $\Add(\Ord,\omega)$. So we can split $C$ into $\omega$ many classes $C_i$ so that the $C_i$ are mutually generic over $(M,\Ycal)$. Further, this splitting process is definable, so the $C_i$ are uniformly definable from the parameter $C$---you can take $C_i$ to be formed from the bits on the coordinates equivalent to $i$ modulo $\omega$. In particular, this means that given any $\xi,\eta < \Lambda$ , if $i \ne j$ then $\Tr_\xi(C_i)$ is not definable from $\Tr_\eta(C_j)$. This is just because $\Tr_\eta(C_j)$ is in $(M,\Ycal[C_j])$ and $C_i$ is generic over that model. 

Fix a world $(M,\Xcal)$ in this potentialist system to work inside. Let $\mu$ be the supremum of the lengths of the iterated truth predicates over the $C_i$'s which are in $\Xcal$. We will use the even coordinates $i$ for our buttons and the odd coordinates $i$ for our switches. For the buttons, let $\beta_i$ be the statement ``$\Tr_{\mu+1}(C_i)$ exists''. For the switches, let $\sigma_i$ be the statement ``if $\xi$ is the largest ordinal for which $\Tr_\xi(C_i)$ exists, then $\xi$ is even''. As a consequence of mutual genericity from the previous paragraph we get that they are independent; we can add a longer truth predicate relative to $C_i$ without affecting which truth predicates exist relative to the $C_j$ for $j \ne i$. This completes the proof.
\end{proof}

\begin{theorem}\label{thm:s42-kappa}
Let $M = V_\kappa$ for inaccessible $\kappa$ and let $\Lambda$ be a limit-length well-order over $M$.
There are $\mathord{<}\Lambda$ truth potentialist systems, but modified to require a global well-order in the base world, over $M$ whose modal validities are precisely $\Sfourtwo$, where we allow parameters. In particular, $\mathsf{.3}$ is invalid for these systems.
\end{theorem}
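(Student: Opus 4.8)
The plan is to run the argument for Theorem~\ref{thm:s42} essentially verbatim, with the inaccessible cardinal $\kappa$ playing the role that countability played there. The only genuinely new ingredient is to supply, in place of the hypothesis that $M$ is the first-order part of a model of $\GB + \ETR(\Lambda)$, a lemma that this holds automatically --- in the appropriate size-$\kappa$ form --- when $M = V_\kappa$ for $\kappa$ inaccessible. Concretely, I would first show: for $\kappa$ inaccessible and $\Lambda$ a limit-length well-order over $V_\kappa$ there is $\Ycal$ with $\card{\Ycal} = \kappa$ and $(V_\kappa,\Ycal) \models \GB + \ETR(\Lambda)$. This is a closure argument. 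Start with $\Ycal_0$ the first-order definable classes of $V_\kappa$ together with $\Lambda$, and let $\Ycal_{n+1}$ adjoin to $\Ycal_n$ the solution of every elementary transfinite recursion of rank $\mathord{\le}\Lambda$ whose relation and parameters lie in $\Ycal_n$. Since $\kappa$ is regular and uncountable, $\Ord^{V_\kappa} = \kappa$ has uncountable cofinality, so by the $\beta$-model observation recalled near the start of \S\ref{truth-potentialism} every class relation over $V_\kappa$ that the model regards as well-founded really is well-founded; hence each such recursion has a genuine solution, which is a class over $V_\kappa$, and the model recognises it as the solution because the recursion equations only quantify over sets. There are only $\kappa$ many (formula, parameter, relation) triples at each stage, so $\card{\Ycal_n} = \kappa$ throughout and $\Ycal = \bigcup_n \Ycal_n$ has size $\kappa$ and is closed under rank-$\mathord{\le}\Lambda$ recursions; as these subsume first-order comprehension and the Boolean operations, and as class replacement follows from the regularity of $\kappa$, we get $(V_\kappa,\Ycal) \models \GB + \ETR(\Lambda)$.

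With this lemma the rest copies Theorem~\ref{thm:s42}. Force over $(V_\kappa,\Ycal)$ to add a Cohen-generic $C \subseteq \Add(\Ord,1)^{V_\kappa} = \Add(\kappa,1)$; this forcing is $\mathord{<}\kappa$-closed and $\Ycal$ supplies only $\kappa$ many dense subclasses, so a generic exists, and since Cohen forcing is tame it preserves $\ETR(\Lambda)$, giving $(V_\kappa,\Ycal[C]) \models \GB + \ETR(\Lambda)$. Let $\Tfrak$ be the potentialist system with base world $(V_\kappa,\Xcal_0)$ for $\Xcal_0 = \Def(V_\kappa,C)$, whose worlds are the $\Xcal_0[\Tr_\xi(A)]$ with $\xi < \Lambda$ and $A \in \Xcal_0$; every such world is coded inside $(V_\kappa,\Ycal[C])$ and hence models $\GB$, and $\Xcal_0 \models \GBC$ because a Cohen-generic class of ordinals codes a global well-order, as in the discussion preceding the theorem. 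So $\Tfrak$ is a $\mathord{<}\Lambda$ truth potentialist system, modified to require a global well-order in the base world.

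Finally, compute the modal validities. The lower bound $\Sfourtwo$ comes from directedness: any two worlds $\Xcal_0[\Tr_{\xi_0}(A_0)]$ and $\Xcal_0[\Tr_{\xi_1}(A_1)]$ are contained in $\Xcal_0[\Tr_\xi(C)]$ for any $\xi < \Lambda$ large enough, using that $\Tr_\Gamma(\Tr_\Lambda)$ is inter-definable with $\Tr_{\Lambda + \Gamma}$, that each $A_i$ is definable from $C$, and that $\Lambda$ is limit-length. For the upper bound, by Theorem~\ref{thm:buttons-and-switches} it suffices to exhibit arbitrarily large finite families of independent buttons and switches: using that $\Add(\Ord,1)$ is forcing equivalent to $\Add(\Ord,\omega)$, split $C$ into classes $C_i$ mutually generic over $(V_\kappa,\Ycal)$ and uniformly definable from $C$, so that $\Tr_\xi(C_i)$ is never definable from $\Tr_\eta(C_j)$ when $i \neq j$; then, inside a fixed world, letting $\mu$ be the supremum of the lengths of the iterated truth predicates over the $C_i$ present there, take as buttons the assertions ``$\Tr_{\mu+1}(C_i)$ exists'' for even $i$ and as switches the assertions ``the largest $\xi$ with $\Tr_\xi(C_i)$ existing is even'' for odd $i$, with independence coming from mutual genericity. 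Hence the modal validities of $\Tfrak$ are exactly $\Sfourtwo$, and in particular $\mathsf{.3}$ is invalid.

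The only step with real content is the first one; the main things to watch there are that the closure really stays of size $\kappa$ and that class relations over $V_\kappa$ which the model thinks well-founded are genuinely well-founded --- which is exactly where inaccessibility of $\kappa$, via regularity and uncountable cofinality, is used. Everything after that is the proof of Theorem~\ref{thm:s42} with $\kappa$ in place of $\omega$ in the counting of dense classes.
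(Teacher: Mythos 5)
Your proposal is correct and follows the paper's route: the paper's own proof is exactly ``the same construction as Theorem~\ref{thm:s42} goes through, once we produce a size-$\kappa$ expansion $(M,\Ycal)\models\GB+\ETR(\Lambda)$ and a Cohen generic over it using ${<}\kappa$-closure of $\Add(\kappa,1)$ and the fact that $\Ycal$ has only $\kappa$ many dense classes.'' The one place you diverge is the auxiliary lemma: the paper gets $(M,\Ycal)\models\GB+\ETR(\Lambda)$ with $\card\Ycal=\kappa$ in one line, by taking an elementary submodel of the full model $(M,\powerset(M))$ (which satisfies $\MK$, hence $\ETR$), whereas you build $\Ycal$ by hand via an $\omega$-stage closure under rank-${\le}\Lambda$ recursions, using that $(V_\kappa,\Xcal)$ is automatically a $\beta$-model and that regularity of $\kappa$ gives class replacement. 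Your construction is fine and has the mild virtue of being explicit and ``minimal,'' at the cost of more bookkeeping; the paper's Löwenheim--Skolem shortcut buys brevity by exploiting the ambient impredicative model $(V_\kappa,\powerset(V_\kappa))$.
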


\begin{proof}[Proof Sketch]
The same construction goes through. The only thing to check is that we can find the desired generic $C$. First, by taking an elementary submodel of $(M,\powerset(M))$ we get $(M,\Ycal) \models \GB + \ETR(\Lambda)$ where $\card \Ycal = \card M = \kappa$. We can force over this model to add a Cohen generic $C$. This is because $\Add(\Ord,1)^M = \Add(\kappa,1)$ is $\mathord{<}\kappa$-closed and since there are only $\kappa$ many dense sets in $\Ycal$ we can meet them to find a generic $C$.
\end{proof}

It may be helpful to see an explicit example of an instance of $\mathsf{.3}$ which is invalidated by this potentialist system. Suppose we are living in a world $(M,\Xcal)$ and define $\mu$ as in the proof. Let $\varphi$ be the assertion ``$\Tr_{\mu+1}(C_0)$ exists but $\Tr_{\mu+1}(C_1)$ does not exist'' and $\psi$ be the assertion where we swap the two coordinates, namely ``$\Tr_{\mu+1}(C_1)$ exists but $\Tr_{\mu+1}(C_0)$ does not exist''. Then $\varphi$ and $\psi$ are both possible at $(M,\Xcal)$. However, if $\varphi$ is true at a world then $\psi$ is impossible at that world, and if $\psi$ is true at a world then $\varphi$ is impossible at that world, giving a failure of $\mathsf{.3}$.

\begin{remark}
This construction can also be used to give unbounded truth potentialist systems, i.e. where the iterated truth predicates can be of any length, modified to have a global well-order in the base world whose modal validities (allowing parameters) are precisely $\Sfourtwo$. In the $V_\kappa$ case this requires no extra assumption, while in the countable transitive model case this requires that $M$ be the first-order part of a model of $\GB + \ETR$.
\end{remark}

This construction also has implications for top-down potentialist systems. We state this only for the countable case, as we give a more precise calculation of the modal validities in the $V_\kappa$ case in Corollary~\ref{cor:kappa-top-down-sfourtwo}.

\begin{corollary} \label{cor:ctm-top-down-point3fails}
Fix countable transitive $M \models \ZFC$ and suppose $M$ is the sets of some model of $\ETR(\Lambda)$ for some limit length well-order $\Lambda$. Consider the top-down potentialist system consisting of all countable $\GB$-expansions of $M$. Then this potentialist system does not validate $\mathsf{.3}$, allowing parameters in formulae.
\end{corollary}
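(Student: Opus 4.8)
The plan is to bootstrap from the construction in the proof of Theorem~\ref{thm:s42}, the point being that the bottom-up truth potentialist system there already lives inside the top-down system of all countable $\GB$-expansions of $M$. By hypothesis there is a countable $(M,\Ycal)\models\GB+\ETR(\Lambda)$; force over it to add a Cohen-generic class of ordinals $C$, which exists because $M$ is countable, and recall that $(M,\Ycal[C])\models\GB+\ETR(\Lambda)$ since Cohen class forcing is tame. Split $C$ into two mutually generic pieces $C_0,C_1$ along the even and odd coordinates, so that $(M,\Ycal[C_i])\models\GB+\ETR(\Lambda)$ — hence $\Tr_1(C_i)$ exists as a class over $M$ — while $C_i$ is Cohen-generic over $(M,\Ycal[C_{1-i}])$. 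Take the base world to be $(M,\Def(M,C))$. This is a countable $\GB$-expansion of $M$ — class replacement holds because it holds in the larger expansion $\Ycal[C]$, and a failure in a subcollection of classes would be a failure there — so it is a world in the top-down system under consideration, and $C_0,C_1\in\Def(M,C)$.

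Next I would fix the instance of $\mathsf{.3}$ to refute, following the explicit example after Theorem~\ref{thm:s42-kappa}. Let $\mu$ be the ordinal defined there at this base world; here $\mu=0$, since by Tarski's theorem no iterated truth predicate of positive length relative to $C_i$ is first-order definable over $M$ from $C$, so $\mu+1=1<\Lambda$. Let $\varphi$ assert ``$\Tr_{\mu+1}(C_0)$ exists but $\Tr_{\mu+1}(C_1)$ does not'' and let $\psi$ assert the same with $C_0$ and $C_1$ interchanged; these are class-theoretic formulas using the set parameter $\mu$ and the class parameter $C$. First I would check that $\possible\varphi$ and $\possible\psi$ hold at the base world: the world $(M,\Def(M,C)[\Tr_{\mu+1}(C_0)])$ extends the base world, is again a countable $\GB$-expansion of $M$, contains $\Tr_{\mu+1}(C_0)$, and — by the mutual genericity of $C_0$ and $C_1$, exactly as exploited in the proof of Theorem~\ref{thm:s42} — does not contain $\Tr_{\mu+1}(C_1)$, so it satisfies $\varphi$; symmetrically for $\psi$.

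Second I would check that the consequent $\possible[(\varphi\land\possible\psi)\lor(\possible\varphi\land\psi)]$ fails at the base world; in fact the bracketed disjunction is false at \emph{every} $\GB$-expansion $W$ of $M$. For if $W\models\varphi$ then $\Tr_{\mu+1}(C_0)\in W$, so since classes persist to all extensions no extension of $W$ can satisfy $\psi$; thus $\possible\psi$ fails at $W$, while $\psi$ itself is incompatible with $\varphi$ at $W$, so both disjuncts fail; symmetrically if $W\models\psi$, and if $W$ satisfies neither the disjuncts fail trivially. Combining, the instance $(\possible\varphi\land\possible\psi)\impl\possible[(\varphi\land\possible\psi)\lor(\possible\varphi\land\psi)]$ of $\mathsf{.3}$ is false at $(M,\Def(M,C))$, a world of the top-down system, so $\mathsf{.3}$ is invalid there, allowing parameters.

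The only substantive step is the ``does not contain $\Tr_{\mu+1}(C_1)$'' half of the first claim: expanding $\Def(M,C)$ by $\Tr_{\mu+1}(C_0)$ must not smuggle in $\Tr_{\mu+1}(C_1)$. This is precisely the independence-of-the-$C_i$ phenomenon used in the proof of Theorem~\ref{thm:s42}: $\Tr_{\mu+1}(C_0)$ together with every class in $\Def(M,C)$ other than $C_1$ already lies in a model of $\GB+\ETR(\Lambda)$ over which $C_1$ is Cohen-generic, and the recursion defining $\Tr_{\mu+1}(C_1)$ is not available in the $\GB$-expansion those classes generate. Everything else is routine bookkeeping: checking that the worlds exhibited really are countable $\GB$-expansions of $M$ (using that class replacement passes down from larger collections of classes) and unwinding the definitions of $\mathsf{.3}$, $\varphi$, and $\psi$; the persistence argument for the consequent makes no use of the bottom-up structure, which is exactly why the failure transfers from the subsystem $\Tfrak$ to the full top-down system.
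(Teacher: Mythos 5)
You are following the paper's intended route: the same pair $\varphi,\psi$ from the example after Theorem~\ref{thm:s42-kappa}, evaluated at the base world $(M,\Def(M,C))$, with the persistence argument showing the consequent of $\mathsf{.3}$ fails at every world --- and that last part is indeed the reason the argument would transfer verbatim to the top-down system, exactly as the paper's one-line proof intends. The trouble is the step you yourself single out as the only substantive one, and your justification of it does not work: the witness world $\Def(M,C)[\Tr_{\mu+1}(C_0)]$ contains $C$ and hence $C_1$, so you cannot reason about ``$\Tr_{\mu+1}(C_0)$ together with every class in $\Def(M,C)$ other than $C_1$'' sitting inside a model over which $C_1$ is generic ($C$ itself lies in no such model). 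Worse, at $\mu=0$ the non-definability you need appears to be false outright. For each fixed formula $\phi$ of the language with one class predicate, the forcing relation for $\Add(\Ord,1)$ is definable over $(M,\in)$, uniformly and recursively in $\phi$; hence the full forcing relation, as a single class, is elementarily definable from $\Tr^M$. Since $C_1$ meets every dense class definable over $(M,\in)$ (it is generic over $\Ycal\supseteq\Def(M)$), the truth lemma gives $\Tr(C_1)=\{(\phi,a): \exists p\subseteq C_1 \text{ with } p\forces\phi(\check a)\}$, which is an elementary definition from $C_1$ and $\Tr^M$; and $\Tr^M$ is just the restriction of $\Tr(C_0)$ to $\in$-formulas. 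Because any $\GB$-expansion is closed under elementary definability from its members, every $\GB$-expansion of $M$ containing $C$ and $\Tr(C_0)$ already contains $\Tr(C_1)$. So your proposed witness world satisfies $\neg\varphi$, $\possible\varphi$ fails at the base world, and the displayed instance of $\mathsf{.3}$ is not refuted. (Note this objection applies equally to the mutual-genericity one-liner in the proof of Theorem~\ref{thm:s42} that you and the paper both lean on: mutual genericity shows $\Tr_\xi(C_i)\notin\Def(M,\Tr_\eta(C_j))$, but the worlds in play also contain $C_i$ as a parameter, and with that parameter the definability goes through.)

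The corollary itself is not thereby in doubt; it just needs a different witness. For instance, it follows from the killing-truth phenomenon: under the corollary's hypothesis $(M,\Def(M))$ is a world having an extension containing $\Tr$ but, by Theorem~\ref{thm:killing-truth}, also an extension (adding a suitably coded Cohen generic) over which $\Tr$ can never appear, so an instance of $\mathsf{.2}$ is invalid as in Corollary~\ref{cor:ctm-top-down-point2fails}; since the validities of a potentialist system form a normal modal logic containing $\Sfour$, and $\Sfour$ plus the $\mathsf{.3}$ schema proves $\mathsf{.2}$, some instance of $\mathsf{.3}$ must be invalid as well. To repair your write-up, either take that route or replace $\varphi,\psi$ by statements whose possibility does not rest on the claim that adding $\Tr_{\mu+1}(C_0)$ over a world containing $C$ avoids $\Tr_{\mu+1}(C_1)$.
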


\begin{proof}
The same assertion $\varphi$ witnesses the failure of $\mathsf{.3}$. Indeed, the same construction of independent buttons and switches to get $\Sfourtwo$ as an upper bound applies.
\end{proof}

In Corollary~\ref{cor:ctm-top-down-point2fails} we will see that $\mathsf{.2}$ may also fail.
\smallskip

We now turn to the other option for getting worlds which satisfy global choice. Rather than add a condition asserting that the base world contains a global well-order, we instead add a rule allowing the addition of a global well-order. Let's first focus on the stronger assumption that the global well-order is added by forcing. That is, we add a new rule to get new worlds: if $(M,\Xcal)$ is a world then so is $(M,\Xcal[C])$ whenever $C$ is a Cohen subclass of $\Ord$ generic over $(M,\Xcal)$. 
Of course, which Cohen generics exist will depend on the choice of model. If $(M,\Xcal)$ is countable, this is just the well-known Rasiowa--Sikorski lemma that you can always meet countably many dense sets. In the $M = V_\kappa$ case, if $\Xcal$ has cardinality $\kappa$ then we can build a generic in $\kappa$ many sets using the $\mathord{<}\kappa$-closure of $\Add(\Ord,1)^M = \Add(\kappa,1)$.
We will start with the definable classes as the smallest world, and keep the old rule about being able to add truth predicates relative to extant classes. 

If $M$ is countable this is quite destructive. Specifically, adding a global well-order may kill off the possibility of adding a truth predicate whilst preserving the basic axioms of $\GB$.\footnote{The non-relativised version of this theorem can be found as \cite[Theorem 1.57]{williams:diss}.}

\begin{theorem} \label{thm:killing-truth}
Let $M$ be a countable transitive model of $\ZFC$ and let $A \subseteq M$ be a class over $M$ so that $(M,\Def(M,A)) \models \GB$. Then there is $C$ Cohen-generic over $(M,\Def(M,A))$ so that no $\GB$-expansion for $M$ can contain both $C$ and $\Tr(A)^M$.
\end{theorem}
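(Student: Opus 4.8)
The plan is to imitate the Mostowski construction described earlier in the excerpt, but with a twist: instead of coding a long real from \emph{two} generics, I code a long real from the \emph{single} generic $C$ together with the (canonical, definable) class $\Tr(A)^M$. The essential point is that $\Tr(A)^M$ lets one decode information about the first-order truth of $M$, and a suitably constructed Cohen generic $C$ can have the bit pattern of a badly behaved real $r$ interleaved with it in a way that is only recoverable with the help of $\Tr(A)^M$. If $r$ codes an order of ordertype $\Ord^M$, then any $\GB$-expansion containing both $C$ and $\Tr(A)^M$ would be able to define $r$, hence define a cofinal $\omega$-sequence in $\Ord^M$, contradicting the class replacement axiom.

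First I would fix a real $r \subseteq \omega$, definable over $(M,\Tr(A))$, coding a well-order of ordertype $\Ord^M$ --- concretely, enumerate a cofinal $\omega$-sequence $\langle \alpha_n : n < \omega\rangle$ in $\Ord^M$ using $\Tr(A)$ (e.g. let $\alpha_n$ be least so that $V^M_{\alpha_n}$ reflects the first $n$ formulas true in $(M,\in,A)$, which exists by reflection inside $M$ and is locatable using the truth predicate) and let $r$ code the induced order on $\omega$. Then I build $C$ by a standard forcing-over-a-countable-model bookkeeping argument: enumerate the dense subsets of $\Add(\Ord,1)^M$ lying in $\Def(M,A)$ as $\langle D_n : n<\omega\rangle$, and construct $C$ as an increasing union of set-sized conditions $p_n$, where at stage $n$ I first extend to meet $D_n$ and then append a designated "coding block" recording the $n$-th bit of $r$ at a location that an outside observer can find only by first computing, from $\Tr(A)^M$, where stage $n$ lives. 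Density guarantees $C$ is generic over $(M,\Def(M,A))$; the interleaved coding blocks guarantee that $r$ is definable over $M$ from $C$ and $\Tr(A)^M$ jointly.

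The final step is the contradiction: suppose $(M,\Ycal)\models\GB$ is a $\GB$-expansion for $M$ with $C,\Tr(A)^M\in\Ycal$ (recall $\Tr(A)^M$ is uniquely characterised by a set-quantifier formula, so $\Ycal$ recognises it). Then $\Ycal$ can define $r$, hence the order it codes, hence --- since that order has ordertype $\Ord^M$ --- a class function witnessing that $\Ord^M$ has cofinality $\omega$ in $(M,\Ycal)$. Decoding the cofinal sequence $\langle\alpha_n\rangle$ from $r$ then yields a class function $F$ with $F''\omega$ cofinal in $\Ord^M$, contradicting class replacement. This mirrors the concluding move of the Mostowski sketch verbatim.

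The main obstacle is engineering the interleaving so that (a) density is genuinely preserved --- the coding blocks must be "transparent" to the dense sets, which is why one meets $D_n$ \emph{first} and only then appends the coding block of length chosen so that subsequent genericity requirements are unaffected --- and simultaneously (b) the decoding procedure really is definable over $M$ from $C$ together with $\Tr(A)^M$ alone, not requiring any further information about the construction. The delicate bookkeeping point is locating the coding blocks: their positions along $\Ord^M$ should themselves be pinned down using the sequence $\langle\alpha_n\rangle$ (e.g. put the $n$-th coding block in the interval $[\alpha_n,\alpha_n+\omega)$), so that $\Tr(A)^M$ suffices both to find them and to read off $r$. Once the blocks are placed at $\Tr(A)$-locatable coordinates, the rest is routine. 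I would also note in passing that this argument does not relativise to arbitrary uncountable $M$: countability of $M$ is used precisely to enumerate the dense sets in order type $\omega$ and thereby run the alternating construction, which is exactly the hypothesis in the statement.
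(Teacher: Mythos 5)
Your overall strategy---interleave coded bits into a Cohen generic at positions that only the truth predicate can unlock, then derive a contradiction with class replacement---is the same as the paper's, but two steps as you describe them would fail. First, the ``bad'' real $r$ cannot be definable over $(M,\Tr(A))$. In the nontrivial case (some $\GB$-expansion contains $\Tr(A)^M$, which is the only case needing an argument), every class definable from $\Tr(A)$ over $M$ lies in that expansion; a class coding a cofinal $\omega$-sequence in $\Ord^M$ (or a real coding an order of type $\Ord^M$, which cannot even be an element of $M$) would then violate class replacement there, contradicting the case hypothesis. Indeed, if such an $r$ were definable from $\Tr(A)$ alone you would not need $C$ at all, and the theorem would collapse into the trivial case. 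Your concrete proposal is also wrong on its own terms: the sequence of \emph{least} $\alpha_n$ with $V^M_{\alpha_n}$ reflecting the first $n$ formulas is bounded in $\Ord^M$ (its supremum is an ordinal of $M$; cf.\ Proposition~\ref{prop:worldly}, which gives a club of $\alpha$ with $V_\alpha^M \prec M$), so it yields no cofinal sequence. The bad object must be chosen \emph{externally}, using only that $M$ is countable so $\Ord^M$ has countable cofinality seen from outside; in the paper it is an arbitrary binary class $B:\Ord\to 2$ whose $1$'s form a cofinal $\omega$-sequence, and the point is that $B$ is definable from $C$ and $\Tr(A)$ \emph{jointly} but lies in no $\GB$-expansion.

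Second, your decoding is not actually definable from $C$ and $\Tr(A)^M$. You build $C$ against an external enumeration $\seq{D_n : n<\omega}$ of the dense classes in $\Def(M,A)$; the stage boundaries then depend on that enumeration, which is not available to any $\GB$-expansion, and your attempted fix (placing the $n$-th block in $[\alpha_n,\alpha_n+\omega)$) both relies on the flawed $\alpha_n$'s and conflicts with the fact that meeting $D_n$ may already push the condition past $\alpha_n$. The missing idea is to make the dense-set bookkeeping itself $\Tr(A)$-definable: let $D_\alpha$ be the intersection of \emph{all} open dense classes definable from $A$ with parameters in $V_\alpha^M$ (open dense since $\Add(\Ord,1)$ is $\mathord{<}\kappa$-distributive for every $\kappa$), a sequence $\seq{D_\alpha : \alpha\in\Ord^M}$ uniformly definable from $\Tr(A)$ because the truth predicate gives uniform access to definability. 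One then runs an $\Ord^M$-length recursion: extend by a \emph{minimal-length} extension into $D_\alpha$, append the bit $B(\alpha)$, take unions at limits (checking, via class replacement in an expansion containing $\Tr(A)$, that limit conditions stay in $M$). Meeting every $D_\alpha$ gives genericity over $(M,\Def(M,A))$, and minimality plus the definability of $\seq{D_\alpha}$ lets any $\GB$-expansion containing both $C$ and $\Tr(A)^M$ recover the coding points and hence $B$, which is the contradiction. Without replacing your external enumeration by this internal, truth-predicate-definable sequence, the final ``$r$ is definable over $M$ from $C$ and $\Tr(A)^M$'' step does not go through.
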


\begin{proof}
If no $\GB$-expansion for $M$ contains $\Tr(A)^M$ then we are trivially done. So suppose we are not in this case.

We claim that from $\Tr(A)^M$ we can define a sequence $\seq{D_\alpha : \alpha \in \Ord}$ of dense subclasses of $\Add(\Ord,1)$ so that meeting every $D_\alpha$ guarantees genericity over $(M,\Def(M,A))$. This is because we can take $D_\alpha$ to be the intersection of all open dense classes definable from $A$ with parameters from $V_\alpha$. The point is, from the truth predicate we can define this sequence, because the truth predicate gives us uniform access to definability. Then $D_\alpha$ is open dense because the forcing is $\kappa$-distributive for every $\kappa$. And clearly meeting every $D_\alpha$ implies getting below every definable dense class. 

Now let's use this sequence of $D_\alpha$s to code a bad real into a generic. Fix a binary sequence $B : \Ord \to 2$ so that the set of $i$ so that $B(i) = 1$ is cofinal in the ordinals of $M$ and has ordertype $\omega$. Note that no $\GB$-expansion for $M$ can contain $B$, as $B$ reveals that $M$'s ordinals have countable cofinality. Define a sequence of conditions: Start with $p_0 = \emptyset$. Given $p_\alpha$, extend to meet $D_\alpha$, where we require the extension to be of minimal length to get into $D_\alpha$. Then add on the bit $B(\alpha)$ to get $p_{\alpha+1}$. And at limit stages, set $p_\lambda = \bigcup_{\alpha < \lambda} p_\alpha$. Note that the sequence $\seq{p_\alpha : \alpha < \lambda}$ is an element of $M$---and thus $p_\lambda \in M$---by application of the class replacement axiom in an $\GB$-expansion for $M$ containing $\Tr(A)^M$, using that any initial segment of $B$ is in $M$.
Then $C = \bigcup p_\alpha$ meets every dense class in $\Def(M,A)$.

Finally, note that if you have both the sequence $\langle D_\alpha \rangle$ and $C$, you can recover the coding points and thereby recover $B$. This is because, given these data, it is a definable property to see the shortest distance you need to extend to meet the next dense class. So if you had both $C$ and the truth predicate in an $\GB$-expansion for $M$, then you would also have $B$ in the $\GB$-expansion. This is impossible.
\end{proof}

This result immediately implies we can kill off iterated truth predicates.

\begin{corollary}
Let $M$ be a countable transitive model of $\ZFC$, let $A \subseteq M$ be a class over $M$ so that $(M,\Def(M,A)) \models \GB$, and let $\Lambda \in \Def(M,A)$ be a class well-order. Then there is $C$ Cohen-generic over $(M,\Def(M,\Tr_\Lambda(A)))$ so that no $\GB$-expansion for $M$ can contain both $C$ and $\Tr_{\Lambda+1}(A)^M$.
\end{corollary}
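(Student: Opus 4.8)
The plan is to obtain this as a direct instance of Theorem~\ref{thm:killing-truth}, with the class $\Tr_\Lambda(A)$ playing the role of the base class $A$. Concretely: granting that $(M,\Def(M,\Tr_\Lambda(A))) \models \GB$, Theorem~\ref{thm:killing-truth} produces a $C$ Cohen-generic over $(M,\Def(M,\Tr_\Lambda(A)))$ such that no $\GB$-expansion for $M$ contains both $C$ and $\Tr(\Tr_\Lambda(A))^M$. Since over $\GB$ the class $\Tr(\Tr_\Lambda(A))$ is inter-definable with $\Tr_{\Lambda+1}(A)$ --- this is the $\Gamma = 1$ instance, for relativised truth predicates, of the inter-definability of $\Tr_\Gamma(\Tr_\Lambda)$ with $\Tr_{\Lambda+\Gamma}$ recorded earlier --- a $\GB$-expansion for $M$ contains $\Tr_{\Lambda+1}(A)^M$ exactly when it contains $\Tr(\Tr_\Lambda(A))^M$. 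So the very same $C$ witnesses the corollary.

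The only step requiring thought is verifying the hypothesis $(M,\Def(M,\Tr_\Lambda(A))) \models \GB$ needed to invoke Theorem~\ref{thm:killing-truth}, and I would argue it by a case split mirroring the opening of that theorem's own proof. If no $\GB$-expansion for $M$ contains $\Tr_{\Lambda+1}(A)^M$, the conclusion of the corollary is vacuous and any Cohen-generic $C$ over $(M,\Def(M,\Tr_\Lambda(A)))$ will do, one existing by the Rasiowa--Sikorski lemma since $M$ is countable. Otherwise, fix a $\GB$-expansion $\Ycal$ for $M$ with $\Tr_{\Lambda+1}(A)^M \in \Ycal$. Being $\Tr_\Lambda(A)^M$ is expressed by a formula quantifying only over sets (with a predicate for $\Tr_{\Lambda+1}(A)$), so $\Tr_\Lambda(A)^M$ is a genuine class, lies in $\Ycal$, and hence $\Def(M,\Tr_\Lambda(A)) \subseteq \Ycal$. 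Now a subcollection of a $\GB$-expansion for $M$ that is closed under parametric definability from its generating classes is itself a $\GB$-expansion: extensionality and predicative comprehension are automatic from how $\Def$ is formed, and class replacement descends from $\Ycal$ because every class function in $\Def(M,\Tr_\Lambda(A))$ already belongs to $\Ycal \models \GB$. Hence $(M,\Def(M,\Tr_\Lambda(A))) \models \GB$ in the nontrivial case.

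That is essentially all. One should also note in passing that $A$, and with it $\Lambda$ and $\Lambda+1$, are definable from $\Tr_\Lambda(A)$ --- the level-$0$ component of $\Tr_\Lambda(A)$ decodes to $\Tr(A)$, hence to $A$ --- so that $\Def(M,A) \subseteq \Def(M,\Tr_\Lambda(A))$ and feeding $\Tr_\Lambda(A)$ into Theorem~\ref{thm:killing-truth} really does subsume the statement we want; and in the degenerate case where $\Lambda$ is the empty well-order one simply applies Theorem~\ref{thm:killing-truth} to $A$ directly. I do not anticipate a genuine obstacle here: the content of the corollary is just that Theorem~\ref{thm:killing-truth} was proved for an arbitrary base class, so the iterated version drops out by feeding the iterated truth predicate back in as that base.
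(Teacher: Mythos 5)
Your proposal is correct and is essentially the paper's own argument: in the nontrivial case apply Theorem~\ref{thm:killing-truth} with $\Tr_\Lambda(A)$ as the base class and then use the inter-definability of $\Tr(\Tr_\Lambda(A))$ with $\Tr_{\Lambda+1}(A)$ to transfer the conclusion. Your explicit verification that $(M,\Def(M,\Tr_\Lambda(A)))\models\GB$ in the nontrivial case is a detail the paper leaves implicit, but it matches the paper's reasoning elsewhere and does not change the route.
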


\begin{proof}
Assume we are in the nontrivial case where some $\GB$-expansion of $M$ contains $\Tr_{\Lambda+1}(A)^M$. Now apply the theorem to $\Tr(\Tr_{\Lambda}(A))$ to produce a generic $C$. Because $\Tr(\Tr_{\Lambda}(A))$ is inter-definable with $\Tr_{\Lambda+1}(A)$, the generic $C$ cannot be in a $\GB$-expansion for $M$ which contains this iterated truth predicate.
\end{proof}

This killing truth phenomenon rules out the possibility of getting a truth potentialist system with allowing extensions to add a generic global well-orders, in the case of working over a countable transitive model.

\begin{corollary}
Let $M$ be a countable transitive model of $\ZFC$ and $\Lambda$ be a limit length well-order over $M$ so that $M$ has a nontrivial $\mathord{<}\Lambda$ truth potentialist system. Consider the smallest $\mathord{<}\Lambda$ truth potentialist system over $M$, call it $\Afrak$. Consider the potentialist system, call it $\Bfrak$, you get by expanding $\Afrak$ by adding in all extensions of worlds in $\Afrak$ by adding a generic global well-order. Then, for any world in $\Bfrak$ which came from $\Afrak$, $(1)$ the $\mathsf{.2}$ axiom is invalid in $\Bfrak$ at every world which came from $\Afrak$, where we allow parameters; and $(2)$ there are worlds in $\Bfrak$ with classes for which it is impossible to find a truth predicate in a larger model of $\GB$ over $M$. That is, this larger potentialist system cannot be expanded to become a truth potentialist system.
\end{corollary}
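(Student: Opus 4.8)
The plan is to reduce everything to Theorem~\ref{thm:killing-truth}, exploiting that $\Bfrak$ contains precisely the sort of ``bad'' Cohen-generic extensions that theorem produces. First I would recall the shape of $\Afrak$ from the proof of Theorem~\ref{thm:extendedsmallest}: its worlds are exactly the $(M,\Xcal_\xi)$ with $\Xcal_\xi = \Def(M,\Tr_\xi)$, as $\xi$ ranges over a limit-length well-order (either $\Lambda$ or $\Lambda \cdot \omega$); they are linearly ordered by $\subseteq$, each satisfies $\GB$, and $\xi+1$ lies in the range whenever $\xi$ does. Then $\Bfrak$ consists of these together with all $(M,\Xcal_\xi[C])$ for $C$ Cohen-generic over $(M,\Xcal_\xi)$ --- using the interdefinability, discussed above, of generic global well-orders with Cohen-generic subclasses of $\Ord$ --- and every world of $\Bfrak$ is a $\GB$-expansion for $M$, since Cohen class forcing is tame and preserves $\GB$.

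For part $(1)$, fix a world $(M,\Xcal_\xi)$ coming from $\Afrak$. Since $(M,\Xcal_\xi) \models \GB$, I would apply Theorem~\ref{thm:killing-truth} with parameter class $A = \Tr_\xi$ to obtain a Cohen-generic $C_\xi$ over $(M,\Xcal_\xi)$ such that no $\GB$-expansion for $M$ contains both $C_\xi$ and $\Tr(\Tr_\xi)^M$; the world $(M,\Xcal_\xi[C_\xi])$ then lies in $\Bfrak$. Let $\phi$ be the class-theoretic formula ``there exists a truth predicate relative to $\Tr_\xi$'', which uses only the parameter $\Tr_\xi \in \Xcal_\xi$ and holds at a world $(M,\Ycal)$ exactly when $\Tr(\Tr_\xi)^M \in \Ycal$. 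I claim $\possible\necessary\phi$ and $\possible\necessary\neg\phi$ both hold at $(M,\Xcal_\xi)$; since $\possible\necessary\neg\phi$ is $\neg\necessary\possible\phi$, this refutes the instance $\possible\necessary\phi \impl \necessary\possible\phi$ of $\mathsf{.2}$ there. For the first: by the third clause of the definition of a truth potentialist system, $(M,\Xcal_\xi[\Tr(\Tr_\xi)])$ is a world of $\Afrak$, hence of $\Bfrak$, and it extends $(M,\Xcal_\xi)$; it and every world above it contain $\Tr(\Tr_\xi)^M$, so $\necessary\phi$ holds at it and $\possible\necessary\phi$ holds at $(M,\Xcal_\xi)$. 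For the second: every $\Bfrak$-extension of $(M,\Xcal_\xi[C_\xi])$ is a $\GB$-expansion for $M$ containing $C_\xi$, so by Theorem~\ref{thm:killing-truth} it omits $\Tr(\Tr_\xi)^M$, whence $\necessary\neg\phi$ holds at $(M,\Xcal_\xi[C_\xi])$. As $\xi$ was arbitrary, this gives $(1)$; I would also record the concrete failing instance explicitly, in the style of the example following Theorem~\ref{thm:s42-kappa}.

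For part $(2)$, the same world $(M,\Xcal_\xi[C_\xi]) \in \Bfrak$ witnesses it: it contains the class $\Tr_\xi$, yet any $\GB$-expansion $(M,\Zcal)$ for $M$ with $\Xcal_\xi[C_\xi] \subseteq \Zcal$ contains $C_\xi$, and so by Theorem~\ref{thm:killing-truth} cannot contain $\Tr(\Tr_\xi)^M$. Thus over this world there is no truth predicate relative to $\Tr_\xi$ to be found in any larger model of $\GB$ over $M$. This in turn blocks expanding $\Bfrak$ to any truth potentialist system (plain, $\mathord{<}\Lambda$-length, or unbounded): the relevant closure clause would require $(M,\Xcal_\xi[C_\xi][\Tr(\Tr_\xi)])$ to be a world, i.e.\ would require a $\GB$-expansion for $M$ extending $\Xcal_\xi[C_\xi]$ and containing $\Tr(\Tr_\xi)^M$, which does not exist.

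I do not expect a deep obstacle, since all the genuine combinatorics is already packaged into Theorem~\ref{thm:killing-truth}; the point needing the most care is evaluating the modal operators against the full world-collection of $\Bfrak$ rather than just the worlds one has named. In particular, $\necessary\neg\phi$ at $(M,\Xcal_\xi[C_\xi])$ must be drawn from the ``no $\GB$-expansion for $M$'' strength of Theorem~\ref{thm:killing-truth}, not merely from inspecting the listed worlds of $\Bfrak$; and one must check that $\phi$ uses only a parameter available at $(M,\Xcal_\xi)$ and that $(M,\Xcal_\xi[\Tr(\Tr_\xi)])$ genuinely sits above $(M,\Xcal_\xi)$ in $\Bfrak$, which is where linearity of $\Afrak$ and $\Afrak \subseteq \Bfrak$ come in.
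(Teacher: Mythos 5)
Your proposal is correct and takes essentially the paper's approach: part $(1)$ is the paper's argument verbatim (apply Theorem~\ref{thm:killing-truth} with $A=\Tr_\xi$ at the world $\Xcal_\xi=\Def(M,\Tr_\xi)$, take $\varphi$ to assert the existence of $\Tr_{\xi+1}$, and observe it is necessary at $\Xcal_{\xi+1}$ but impossible at $\Xcal_\xi[C]$). The only difference is in $(2)$, where you witness the phenomenon with the class $\Tr_\xi$ in the world $\Xcal_\xi[C_\xi]$ (no larger $\GB$-expansion can contain $\Tr_{\xi+1}$), while the paper points at the generic $C$ itself and the non-existence of $\Tr(C)$; both witnesses follow from the same theorem, so this is a cosmetic variation rather than a different route.
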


\begin{proof}
Because $\Afrak$ is the smallest $\mathord{<}\Lambda$ truth potentialist system over $M$, each world in $\Afrak$ has classes of the form $\Xcal_\xi = \Def(M,\Tr_\xi)$ for some $\xi < \Lambda$. Fix $\xi$ and consider the corresponding world in $\Afrak$. Then there is $C \subseteq \Ord^M$ Cohen-generic over $(M,\Xcal_\xi)$ so that no $\GB$-expansion for $M$ can contain both $C$ and $\Tr_{\xi+1}$. Let $\varphi$ be the formula asserting that $\Tr_{\xi+1}$ exists. Then $\varphi$ is possibly necessary at $\Xcal_\xi$, since it is necessary at $\Xcal_{\xi+1}$ but is not necessarily possible, since it is impossible in $\Xcal_\xi[C]$. This establishes $(1)$ of the corollary. 
For $(2)$, the theorem tells us that $\Tr(C)$ cannot exist in any $\GB$-expansion of $M$.
\end{proof}

\begin{remark}
If, instead of only allowing extensions adding generic global well-orders, we include extensions adding any global well-order, we still get the same phenomenon. This is because such a potentialist system would include extensions by generic global well-orders, so the truth-killing theorem would still apply.
\end{remark}

For a similar reason, this result affects the modal validities for top-down potentialist systems over a countable transitive model.

\begin{corollary} \label{cor:ctm-top-down-point2fails}
Fix countable transitive $M \models \ZFC$ and consider the top-down class potentialist system $\Afrak$ for $M$ consisting of countable $(M,\Xcal) \models \GB$. If $(M,\Xcal)$ is a world in $\Afrak$ which contains a class $A$, does not contain not its truth predicate $\Tr(A)$, but does have an expansion to a world which contains $\Tr(A)$, then the $\mathsf{.2}$ axiom is invalid at $(M,\Xcal)$, allowing $A$ as a parameter. In particular, if $A$ is parameter-free definable, then $\mathsf{.2}$ is invalid at $(M,\Xcal)$ without allowing parameters.
\end{corollary}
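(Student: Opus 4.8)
The plan is to refute a single instance of $\mathsf{.2}$, namely the one obtained by substituting for both propositional variables the class-theoretic assertion $\phi \equiv$ ``the truth predicate $\Tr(A)$ exists as a class'', which uses $A$ as its only parameter. So I must show $\possible\necessary\phi$ holds at $(M,\Xcal)$ while $\necessary\possible\phi$ fails there.

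The first half is immediate: by hypothesis there is a world $(M,\Xcal^*) \supseteq (M,\Xcal)$ in $\Afrak$ with $\Tr(A) \in \Xcal^*$, and since ``being $\Tr(A)^M$'' is expressed by a formula quantifying only over sets, every world extending $(M,\Xcal^*)$ still contains that same class, so $\necessary\phi$ holds at $(M,\Xcal^*)$ and hence $\possible\necessary\phi$ holds at $(M,\Xcal)$. For the second half I want a world $(M,\Ycal) \supseteq (M,\Xcal)$ in $\Afrak$ at which $\necessary\neg\phi$ holds, i.e.\ no $\GB$-expansion for $M$ extending $\Ycal$ contains $\Tr(A)$. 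Since $(M,\Xcal) \models \GB$, $A \in \Xcal$, and $M$ is countable, I would carry out the construction of Theorem~\ref{thm:killing-truth} relative to $(M,\Xcal)$ in place of $(M,\Def(M,A))$, producing $C \subseteq \Ord^M$ which is (i) Cohen-generic over $(M,\Xcal)$ and (ii) such that no $\GB$-expansion for $M$ contains both $C$ and $\Tr(A)^M$. Granting such a $C$, set $\Ycal = \Xcal[C]$: by (i) and the fact that $\Add(\Ord,1)^M$ is $\mathord{<}\kappa$-closed for every $\kappa$, this is a tame class-forcing extension, so $(M,\Xcal[C]) \models \GB$, and it is still countable, hence a world in $\Afrak$ extending $(M,\Xcal)$; by (ii) no $\GB$-expansion extending it has $\Tr(A)$, so $\necessary\neg\phi$ holds at $(M,\Ycal)$. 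This gives the failure of $\mathsf{.2}$ at $(M,\Xcal)$ with $A$ the only parameter, and the final sentence follows since $\phi$ is parameter-free when $A$ is.

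To produce the $C$ above I would adapt the proof of Theorem~\ref{thm:killing-truth}. As there, $\Tr(A)$ defines the decreasing sequence $\langle D_\alpha : \alpha \in \Ord^M\rangle$ of open dense classes (for each $\alpha$, the intersection of all open dense classes definable from $A$ with parameters in $V^M_\alpha$), a sequence available in every $\GB$-expansion containing $\Tr(A)$; one also fixes $B : \Ord^M \to 2$ whose $1$-set is an $\omega$-sequence cofinal in $\Ord^M$. One builds $C$ by a recursion of length $\Ord^M$: at stage $\alpha$ one extends minimally into $D_\alpha$, appends the bit $B(\alpha)$, and additionally extends to meet the next dense class of $\Xcal$ in a fixed bookkeeping of the (countably many) dense classes of $\Xcal$. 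As in the original proof, every proper initial segment of $B$ is an element of $M$, so class replacement in a $\GB$-expansion holding $\Tr(A)$ keeps the partial conditions set-sized and the recursion goes through; meeting every dense class of $\Xcal$ makes $C$ generic over $(M,\Xcal)$. Finally, if some $\GB$-expansion $\Zcal$ contained both $C$ and $\Tr(A)$, it could define $\langle D_\alpha\rangle$ and replay the recursion against $C$ to read off $B$, contradicting class replacement in $\Zcal$. The main obstacle is exactly making this replay survive the inserted $\Xcal$-steps: $\Zcal$ knows $\langle D_\alpha\rangle$ but not the enumeration of $\Xcal$'s dense classes, so one must arrange the bookkeeping so that meeting the scheduled dense class of $\Xcal$ at stage $\alpha$ does not push $C$ into $D_{\alpha+1}$ before the designated stage, allowing $\Zcal$ to locate the coding positions as ``the first place $C$ enters $D_\alpha$''. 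Since the complement of each $D_\alpha$ is nowhere dense this needs care, choosing at each stage a requirement of $D_{\alpha+1}$ that can be kept unsatisfied while the $\Xcal$-dense class is met; I expect this to be the one genuinely delicate point. (Alternatively, one argues directly that the $C$ of Theorem~\ref{thm:killing-truth}, generic merely over $(M,\Def(M,A))$, already satisfies $(M,\Xcal[C]) \models \GB$, since any cofinal $\omega$-sequence defined from $C$ together with a class of $\Xcal$ would have to be read off using $\langle D_\alpha\rangle$, hence using $\Tr(A) \notin \Xcal$; making this precise is the alternative shape of the obstacle.)
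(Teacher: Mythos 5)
Your skeleton is exactly the paper's: the witnessing assertion is the same $\phi$ (``$\Tr(A)$ exists''), $\possible\necessary\phi$ comes from the hypothesised world containing $\Tr(A)$ together with upward persistence, and $\neg\necessary\possible\phi$ is to come from the truth-killing generic of Theorem~\ref{thm:killing-truth}; the paper's proof is literally that one sentence. Your first two paragraphs are correct, and in the special case $\Xcal = \Def(M,A)$ --- which includes the paper's illustrative example $(M,\Def(M))$ --- they already finish the job, since there the theorem's $C$ is Cohen-generic over $(M,\Xcal)$ itself, tameness gives $(M,\Xcal[C]) \models \GB$, and so $(M,\Xcal[C])$ is a world of $\Afrak$ above $(M,\Xcal)$ at which $\phi$ is impossible.

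For an arbitrary world $(M,\Xcal)$, however, you have put your finger on the real issue and then not resolved it. The theorem only supplies $C$ generic over $(M,\Def(M,A))$; to refute $\necessary\possible\phi$ at $(M,\Xcal)$ one needs some $\GB$-expansion extending $\Xcal$ that precludes $\Tr(A)$, and simply adjoining the theorem's $C$ to $\Xcal$ may destroy class replacement (classes of $\Xcal$ not definable from $A$ can interact with a non-mutually-generic $C$ to code a cofinal $\omega$-sequence, exactly the Mostowski phenomenon the paper discusses). So what is needed is a relativised killing-truth statement: a $C$ generic over the whole countable $\GB$-expansion $(M,\Xcal)$ from which, together with $\Tr(A)$ alone, a short cofinal sequence is definable. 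Your plan --- interleave the countably many dense classes of $\Xcal$ into the recursion --- is the natural route, but the recoverability of the coding points by a decoder who knows only $C$ and $\langle D_\alpha\rangle$ (not the enumeration of $\Xcal$'s dense classes) is the entire content of that strengthening, and your proposal explicitly leaves it as an unresolved ``delicate point''; the parenthetical alternative (that the original $C$ already preserves $\GB$ over $\Xcal$) is likewise only a hope, and is false in general. So as written the proposal does not establish the corollary in the generality stated: it reproduces the paper's one-line reduction and correctly flags, but does not fill, the missing relativised form of Theorem~\ref{thm:killing-truth}. If you pursue the repair, the structural fact to exploit is that each $D_\alpha$ is open dense and the sequence is decreasing, so along $C$ membership in $D_\alpha$ is permanent and the first-entry length into each $D_\alpha$ is definable from $C$ and $\Tr(A)$ alone; a coding keyed to such first entries, with the construction arranged to control where they occur while the $\Xcal$-dense classes are met elsewhere, is the way to make the decoding robust --- but that argument has to be carried out, not merely gestured at.
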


For example, if $M$ has a $\GB$-expansion which contains $\Tr$, then $(M,\Def(M))$ is such a world.

\begin{proof}
The statement ``$\Tr(A)$ exists'' is possibly necessary but by the theorem is not necessarily possible.
\end{proof}

On the other hand, if $M = V_\kappa$ for $\kappa$ inaccessible  then this sort of destructive behaviour is impossible.

\begin{observation}\label{bringtogetherkappa}
Suppose $M = V_\kappa$ where $\kappa$ is inaccessible and consider $(M,\Xcal) \models \GB$. If $C$ is any Cohen generic class of ordinals over $(M,\Xcal)$ then there is an extension $(M,\Ycal) \models \GB$ of $(M,\Xcal)$ which contains $C$ and a truth predicate relative to any class in $\Xcal$. Indeed, we may pick $\Ycal$ to contain $\Tr_\xi(A)$ for any $\xi,A \in \Xcal$.
\end{observation}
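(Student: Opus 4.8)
The plan is to exploit a feature special to $M = V_\kappa$ with $\kappa$ inaccessible that is absent in the countable case: because $\kappa$ is regular, the class replacement axiom holds automatically for \emph{any} collection of classes over $M$. Indeed, if $F$ is a class function and $a \in M$ is a set, then $F''a$ has size $\le \card a < \kappa$ and each of its elements has rank $<\kappa$, so by regularity $\sup\{\rk(y)+1 : y \in F''a\} < \kappa$ and hence $F''a \in V_\kappa = M$. Consequently, to build the desired $(M,\Ycal) \models \GB$ it suffices to produce a collection $\Ycal \subseteq \powerset(M)$ containing $\Xcal$, $C$, and the relevant truth predicates, and closed under elementary comprehension; extensionality and replacement then come for free. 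This is exactly where the countable case differs: there $\Ord^M$ has countable cofinality, and the proof of Theorem~\ref{thm:killing-truth} codes a cofinal $\omega$-sequence $B$ into $C$ using the truth predicate, violating class replacement --- but over $V_\kappa$ no such $B$ exists, so there is nothing to obstruct.

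With that in mind, I would first pass to $(M,\powerset(M))$, which is a model of $\MK$ (it is a standard fact that $(V_\kappa,V_{\kappa+1}) \models \MK$ whenever $\kappa$ is inaccessible), hence of $\ETR$. Since $\ETR$ is, by Fujimoto's theorem quoted earlier, equivalent over $\GB$ to the existence of the $\Lambda$-iterated truth predicate relative to $A$ for every class well-order $\Lambda$ and every class $A$, all such $\Tr_\Lambda(A)$ exist in $(M,\powerset(M))$; as these are subsets of $M$ they lie in $\powerset(M)$. I would then let $\Ycal$ be the smallest $\GB$-expansion of $M$ containing $\Xcal \cup \{C\} \cup \{\Tr_\Lambda(A) : \Lambda, A \in \Xcal,\ \Lambda\text{ a class well-order}\}$, that is, the collection of classes over $M$ elementarily definable from $C$, finitely many members of $\Xcal$, and finitely many of these iterated truth predicates, with set parameters allowed. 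By construction $\Ycal$ is extensional and closed under elementary comprehension, and by the previous paragraph it satisfies class replacement; so $(M,\Ycal) \models \GB$, with $\Xcal \subseteq \Ycal$, $C \in \Ycal$, and $\Tr_\Lambda(A) \in \Ycal$ for all $\Lambda, A \in \Xcal$. (One in fact gets $\GBC$, since a Cohen-generic $C$ codes a global well-order of $M$ in the usual way.)

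Finally, if one wants $\Ycal$ to have size $\kappa$ so that it sits inside the top-down class potentialist system for $V_\kappa$ from \S\ref{systems}, I would observe, in the case $\card\Xcal = \kappa$, that there are only $\kappa$ many generating classes (the members of $\Xcal$, together with $C$ and at most $\kappa$ many iterated truth predicates), and for each finite tuple of generators only $\aleph_0$ formulas and $<\kappa$ parameter tuples, so $\card\Ycal = \kappa$; alternatively one could pass to a size-$\kappa$ elementary submodel of $(M,\powerset(M))$ containing $C$, $\Xcal$, and the iterated truth predicates, exactly as in the proof of Theorem~\ref{thm:s42-kappa}. I do not expect a genuine obstacle here: the entire content of the observation is the contrast with Theorem~\ref{thm:killing-truth}, and the regularity of $\kappa$ does all the work --- it makes class replacement free, so combining a Cohen generic with arbitrarily long iterated truth predicates can never break the model.
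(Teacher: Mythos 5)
Your proof is correct, but it takes a different route from the paper's. The paper uses the ``blunt tool'' it advertises: take $(M,\Ycal)$ to be an elementary submodel of $(M,\powerset(M))$ containing $\Xcal$, $C$, and the relevant parameters; since $(M,\powerset(M)) \models \MK$ and being $\Tr_\xi(A)$ is expressible with set quantifiers only (hence absolute between expansions), elementarity immediately hands you a $\GB$ (indeed $\MK$) world with all the desired iterated truth predicates. What you do instead is carry out the ``minimal manner'' construction the paper explicitly mentions but does not execute: you observe that over $M = V_\kappa$ class replacement holds for \emph{any} collection of classes by regularity of $\kappa$, pull the iterated truth predicates $\Tr_\Lambda(A)$ out of $(M,\powerset(M))$ (via $\MK \Rightarrow \ETR$, or simply by external recursion---note this uses that $(M,\Xcal)$ is a $\beta$-model, which holds since $\cf(\kappa) > \omega$, so the $\Lambda \in \Xcal$ are genuinely well-founded), and close under elementary definability. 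Your route buys a genuinely smallest witness $\Ycal$, the bonus that it satisfies $\GBC$, and, most usefully, a conceptual explanation of why the killing-truth phenomenon of Theorem~\ref{thm:killing-truth} cannot occur here: that argument needs a class coding an $\omega$-cofinal sequence in $\Ord^M$ to violate class replacement, and regularity of $\kappa$ makes class replacement free. The paper's route is shorter and needs no case-by-case verification of the $\GB$ axioms. One trivial slip in your cardinality aside: for each finite tuple of generators there are $\kappa$ (not $<\kappa$) many set-parameter tuples, but the total count is still $\kappa$; in any case the cardinality claim is not part of the observation being proved.
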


\begin{proof}
One can construct $\Ycal$ in a minimal manner, but let's use a blunt tool. Let $(M,\Ycal)$ be an elementary submodel of $(M,\powerset(M))$ which contains $\Xcal$ and $A$. Done.
\end{proof}

\begin{proposition}\label{sfourtwokappa}
Suppose $M = V_\kappa$ where $\kappa$ is inaccessible and let $\Afrak$ be the smallest $\mathord{<}\Lambda$ truth potentialist system over $M$ for some limit length $\Lambda$. Consider a potentialist system $\Bfrak$ created by taking $\Afrak$ by adding in extensions of worlds in $\Afrak$ by adding a global well-order, then closing off by adding $\mathord{<}\Lambda$ iterated truth predicates. Then $\Bfrak$ validates exactly $\Sfourtwo$, allowing parameters. 
\end{proposition}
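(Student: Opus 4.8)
The plan is to pin the modal validities of $\Bfrak$ between $\Sfourtwo$ as a lower bound and $\Sfourtwo$ as an upper bound. For the lower bound I would argue that the frame $\Bfrak$ is directed, since any reflexive transitive directed frame validates $\Sfourtwo$; because this is purely an argument about the accessibility relation, it handles substitutions with parameters automatically. For the upper bound I would, at each world, exhibit arbitrarily large finite families of independent buttons and switches and invoke Theorem~\ref{thm:buttons-and-switches} in the world-by-world form used in the proof of Theorem~\ref{thm:s42}; the buttons and switches are essentially the ones appearing there.

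\emph{Directedness.} Every world of $\Bfrak$ is obtained from an $\Afrak$-world $(M,\Xcal_\xi)=(M,\Def(M,\Tr_\xi))$ by adjoining a global well-order and then, by the closure rule, finitely many iterated truth predicates $\Tr_\eta(A)$ with $\eta<\Lambda$. Given two worlds $(M,\Zcal_0)$ built over $(M,\Xcal_{\xi_0})$ with global well-order $G_0$ and $(M,\Zcal_1)$ built over $(M,\Xcal_{\xi_1})$ with global well-order $G_1$, set $\zeta=\max(\xi_0,\xi_1)$ and pick a global well-order $G^\ast$ over $(M,\Xcal_\zeta)$ whose bit pattern codes the pair $\langle G_0,G_1\rangle$; over $V_\kappa$ this is unproblematic, as every subclass of $V_\kappa$ is a legitimate class and class replacement is free. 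Then $(M,\Xcal_\zeta[G^\ast])\in\Bfrak$ contains $\Tr_{\xi_0},\Tr_{\xi_1}$ (restrictions of $\Tr_\zeta$) and $G_0,G_1$ (by decoding), and closing off by the truth-predicate rule in the order in which the predicates $\Tr_\eta(A)$ were originally adjoined to $\Zcal_0$ and $\Zcal_1$ yields a world of $\Bfrak$ containing both $\Zcal_0$ and $\Zcal_1$. (Should the rule be intended to add only \emph{generic} global well-orders, one instead takes $G^\ast$ Cohen generic over $(M,\Xcal_\zeta)$ via the $\mathord{<}\kappa$-closure of $\Add(\Ord,1)^M$ and then invokes Observation~\ref{bringtogetherkappa} to collect $G_0$, $G_1$ and the needed truth predicates into a single $\GB$-expansion, which is then re-generated by the rules of $\Bfrak$.)

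\emph{Upper bound.} Fix a world $W=(M,\Zcal)$ of $\Bfrak$ obtained through the global-well-order rule; then $\Zcal$ contains a global well-order inter-definable with a Cohen-generic class $C\subseteq\Ord$, and since $\Add(\Ord,1)$ is forcing equivalent to $\Add(\Ord,\omega)$ a definable partition splits $C$ into mutually generic pieces $C_0,C_1,\dots\in\Zcal$. Let $\mu$ be the supremum of those $\xi$ with $\Tr_\xi(C_i)\in\Zcal$ for some $i$; only finitely many truth predicates were adjoined to reach $W$, so $\mu<\Lambda$ and, as $\Lambda$ is limit-length, $\mu+1<\Lambda$. As in Theorem~\ref{thm:s42} I would take $\beta_i=$ ``$\Tr_{\mu+1}(C_i)$ exists'' for even $i$ and $\sigma_i=$ ``the largest $\xi$ with $\Tr_\xi(C_i)$ existing is even'' for odd $i$. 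Each $\beta_i$ is monotone and, by the truth-predicate closure rule and $\mu+1<\Lambda$, can always be forced true; each $\sigma_i$ and its negation is likewise always reachable; and independence follows from mutual genericity exactly as there, since any $\Tr_\eta(C_j)$ with $j\neq i$ lies in the inner model $M[C_j]$ over which $C_i$ is still Cohen generic. Observation~\ref{bringtogetherkappa} is what guarantees, over $V_\kappa$, that the required $\GB$-expansions genuinely exist, so that none of these control statements meets the killing-truth obstruction of the countable case. Hence the validities at $W$ lie in $\Sfourtwo$; intersecting over the worlds of $\Bfrak$ shows $\Bfrak$ validates $\Sfourtwo$ and nothing more. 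A world $W$ that is a bare $\Afrak$-world, if such are counted in $\Bfrak$, is handled by first adjoining a Cohen-generic global well-order, which is permitted for $\Afrak$-worlds.

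\emph{Main obstacle.} The genuinely delicate point is the interaction of the ``add a global well-order'' rule --- a priori permitting arbitrary, possibly non-Cohen-generic, global well-orders --- with the splitting-into-mutually-generic-pieces device underlying the independent buttons and switches: one must check either that the rule is to be read as adding generic global well-orders or that one may always refine to such, and that directedness and the independence of the control statements both survive. The subsidiary care needed is in the world-local use of Theorem~\ref{thm:buttons-and-switches} with the class parameters $C_i,\mu$, and in confirming that the bare $\Afrak$-worlds do not secretly validate a principle strictly between $\Sfourtwo$ and $\Sfourthree$. Everything else is a routine adaptation of the proofs of Theorem~\ref{thm:s42} and Observation~\ref{bringtogetherkappa}.
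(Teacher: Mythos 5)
Your proposal follows essentially the same route as the paper's proof sketch: the $\Sfourtwo$ lower bound via directedness of $\Bfrak$, and the upper bound via the independent buttons-and-switches construction of Theorems \ref{thm:s42} and \ref{thm:s42-kappa}, with Observation \ref{bringtogetherkappa} guaranteeing over $V_\kappa$ that no killing-truth obstruction arises. The one overreach is your claim that \emph{every} world reached through the well-order rule contains a class inter-definable with a Cohen generic (false for arbitrary global well-orders, which can code long truth predicates), but since the system's validities are the intersection over all worlds, a single world obtained from a Cohen-generic well-order suffices for the upper bound, so your argument still delivers the stated conclusion.
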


\begin{proof}[Proof Sketch]
The observation implies that $\Bfrak$ is directed as a partial order---given $\Xcal$ and $\Ycal$ from $\Bfrak$ then the closure of $\Xcal \cup \Ycal$ under definability will be a world in $\Bfrak$---and so it validates $\Sfourtwo$. To show that $\Sfourtwo$ is also an upper bound, we use the same strategy as in Theorem~\ref{thm:s42-kappa}. If $(M,\Xcal)$ is a world in $\Bfrak$ containing a class $C$ which is Cohen generic over a smaller world in $\Bfrak$, then there is a bound to the length of the iterated truth predicates relative to $C$ which are in $\Xcal$. This gives the necessary space to define independent buttons and switches by looking at iterated truth predicates relative to fragments $C_i$ of $C$.
\end{proof}

Similar considerations apply to top-down potentialist systems.

\begin{corollary} \label{cor:kappa-top-down-sfourtwo}
Suppose $M = V_\kappa$ where $\kappa$ is inaccessible and let $\Afrak$ be the top-down class potentialist system for $M$ consisting of all $(M,\Xcal) \models \GB$ with $\card \Xcal = \kappa$. Then the modal validities for $\Afrak$ are exactly $\Sfourtwo$, allowing parameters. Indeed, every world in $\Afrak$ extends to a world whose validities are exactly $\Sfourtwo$, again allowing parameters.
\end{corollary}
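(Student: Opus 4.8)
The strategy is to reduce this to the machinery already developed: Observation~\ref{bringtogetherkappa} for the lower bound (directedness) and the independent buttons-and-switches argument of Theorem~\ref{thm:s42-kappa} (via Theorem~\ref{thm:buttons-and-switches}) for the upper bound. The only real work is to check that the top-down system $\Afrak$, whose worlds are \emph{all} $\GB$-expansions of $M$ of size $\kappa$, behaves enough like the bottom-up systems $\Bfrak$ from Proposition~\ref{sfourtwokappa} for those arguments to go through verbatim.

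First I would establish the lower bound. Given two worlds $(M,\Xcal)$ and $(M,\Ycal)$ in $\Afrak$, form the smallest $\GB$-expansion $\Zcal$ of $M$ containing $\Xcal \cup \Ycal$; since $\kappa$ is inaccessible and $\card\Xcal = \card\Ycal = \kappa$, we have $\card\Zcal = \kappa$, so $(M,\Zcal)$ is again a world in $\Afrak$. Hence $\Afrak$ is directed as a partial order, and directed frames validate $\Sfourtwo$. (One should note that class replacement is automatic here: a failure in $\Zcal$ would already be a failure in the smaller $\Xcal$, which satisfies $\GB$.)

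Next, the upper bound. Fix a world $(M,\Xcal)$ and pass to an extension inside $\Afrak$ which contains a Cohen-generic class $C \subseteq \Ord^M$ over $(M,\Xcal)$ --- such a generic exists by the $\mathord{<}\kappa$-closure of $\Add(\kappa,1)$ and the fact that there are only $\kappa$ many dense sets in $\Xcal$ --- together with (via an elementary submodel of $(M,\powerset(M))$, as in Observation~\ref{bringtogetherkappa}) iterated truth predicates $\Tr_\xi(A)$ for the relevant $A,\xi$. Splitting $C$ into $\omega$ mutually generic pieces $C_i$ (uniformly definable from $C$), I would form the buttons $\beta_i$ = ``$\Tr_{\mu+1}(C_i)$ exists'' on even coordinates and the switches $\sigma_i$ = ``the largest $\xi$ with $\Tr_\xi(C_i)$ existing is even'' on odd coordinates, where $\mu$ bounds the lengths of iterated truth predicates over the $C_i$ present in the current world. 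Mutual genericity ensures independence exactly as in Theorem~\ref{thm:s42-kappa}, and the existence of \emph{arbitrary}-length iterated truth predicates (Observation~\ref{bringtogetherkappa} again) guarantees the buttons can always be pushed and the switches toggled, so every world of $\Afrak$ reachable from $(M,\Xcal)$ admits arbitrarily large finite families of independent buttons and switches. By Theorem~\ref{thm:buttons-and-switches} the modal validities at such worlds are contained in $\Sfourtwo$; combined with the lower bound this pins them down to exactly $\Sfourtwo$, and the last sentence of the corollary follows since every world extends to one containing such a $C$.

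The main obstacle is a subtlety about \emph{where} the control statements live: in the bottom-up system $\Bfrak$ every world is of the restricted form $\Def(M,\Tr_\xi(A))$, so the parameter $\mu$ is well-defined and the independence bookkeeping is clean, whereas in $\Afrak$ a world can be an arbitrary $\GB$-expansion and might already contain stray truth predicates or generics. I would handle this by first moving to an extension containing $C$ (as above) and only then defining $\mu$ \emph{relative to that world}; since $\Afrak$ contains all size-$\kappa$ expansions, every extension used in the buttons/switches argument (adding one more iterated truth predicate relative to some $C_i$, or an elementary-submodel expansion) stays inside $\Afrak$, so the argument never leaves the system. The inaccessibility of $\kappa$ is what makes all these closure operations land back in $\Afrak$, and it is also what rules out the truth-killing pathology of Theorem~\ref{thm:killing-truth}, so directedness genuinely holds here.
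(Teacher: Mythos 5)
Your lower bound (directedness) and your general plan (Cohen generic plus iterated-truth-predicate buttons and switches, then Theorem~\ref{thm:buttons-and-switches}) match the paper, but there is a genuine gap in how you set up the world at which the upper bound is computed. The paper's proof contains one new ingredient beyond citing the earlier results, and it is exactly the step you omit: given an arbitrary world $(M,\Xcal)$, first choose a single class $X \subseteq M^2$ coding all of $\Xcal$ (possible since $\card\Xcal = \kappa = \card M$), pass to the world $\Ycal = \Def(M,X)$, and only then force to add $C$ Cohen-generic over $\Ycal$; the buttons-and-switches argument is run at $(M,\Ycal[C])$, a world definability-generated by the single class $X$ together with a generic over it, which is what makes the bookkeeping of Proposition~\ref{sfourtwokappa} (the existence of the bound $\mu$, unpushed buttons, toggleable switches) applicable.

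Your substitute --- take $C$ generic over $(M,\Xcal)$ itself, ``move to an extension containing $C$'' and define $\mu$ relative to that world --- does not work as stated, because in the top-down system $\Xcal$ is an \emph{arbitrary} $\GB$-expansion of size $\kappa$. If $\Xcal$ satisfies $\ETR$ (e.g.\ an elementary submodel of $(M,\powerset(M))$ of size $\kappa$, which is a world of $\Afrak$), then since tame class forcing preserves $\ETR(\Gamma)$, already the minimal extension $\Xcal[C]$ contains $\Tr_\xi(C_i)$ for every length $\xi$ it can see: there is no bound $\mu$, every proposed button is already pushed there, and the proposed switches are permanently stuck, so no family of independent unpushed controls of this form exists at that world and Theorem~\ref{thm:buttons-and-switches} cannot be invoked. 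Your wording makes this worse: you build the base world ``together with (via an elementary submodel of $(M,\powerset(M))$ \ldots) iterated truth predicates,'' thereby conflating the two distinct uses of the elementary-submodel device --- in the paper it serves only to realise extensions (pushing buttons, toggling switches, Observation~\ref{bringtogetherkappa}), never to construct the world where the validities are measured, which must be kept small and controlled. Relatedly, your claim that ``every world of $\Afrak$ reachable from $(M,\Xcal)$ admits arbitrarily large finite families of independent buttons and switches'' is false for the rich worlds just described; what the corollary needs, and what the coding step delivers, is one carefully chosen extension of each world at which the controls are available.
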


\begin{proof}[Proof Sketch]
That $\Sfourtwo$ is a lower bound is because $\Afrak$ is directed as a partial order.
Consider any world $(M,\Xcal)$ in $\Afrak$. Pick $X \subseteq M^2$ so that $\Xcal$ is coded by $X$, meaning $\Xcal = \{ (X)_i : i \in M \}$. Then $\Ycal = \Def(M,X)$ gives a world in $\Afrak$ which extends $\Xcal$. Extend $(M,\Ycal)$ to $(M,\Ycal[C])$, where $C$ is Cohen-generic over $\Ycal$. The same strategy as before yields that the modal validities at $(M,\Ycal[C])$ are exactly $\Sfourtwo$.
\end{proof}

\section{The bottom-up approach} \label{bottom-up}

In this section we discuss what light the mathematical results from the previous sections shed on bottom-up approaches to class potentialism. Our results indicate that various bottom-up class potentialist systems might not be convergent. Why this is relevant for bottom-up approaches? Well, these begin by specifying some initial starting world (i.e. \textbf{Initial World}) and then individuate new classes over this and subsequent worlds (i.e. \textbf{Individuation}). What a lack of convergence shows is that within these systems there are `choice-points'---positions in the system where we must choose to go one way rather than another.

Some of the potentialist systems we have considered exhibit explicit failures of the $\mathsf{.2}$ axiom.
If we imagine progressively building up the classes in such a system, we face choices of permanent consequence---statements like ``There is a truth predicate for $A$'' (where $A$ is some class or other) can be made true (and hence also necessary), but equally can be made impossible. 
 
There are at least two ways one might react to such non-convergence. One is to view non-convergence as a substantial \textit{cost}---we want our mathematical concepts of set and class, even if modal, not to contain these choice points both for philosophical cohesiveness and mathematical expedience. Another way to view them is as \textit{interesting} but not any special cost---they indicate structural properties of the relevant potentialist system (and perhaps the underlying concepts), but this feature is unproblematic. Whether or not they are taken to be a cost or merely a point of interest may depend somewhat on one's philosophical outlook. This highlights some important choice points for the bottom-up theorist.
 

\textbf{If we want convergence, smallest systems are good.} We know that various kinds of classes can interfere with the addition of truth predicates, resulting in non-convergence. However, the \textit{smallest} systems (in the sense outlined in Theorems \ref{thm:smallest} and \ref{thm:extendedsmallest}) do \textit{not} exhibit branching, and validate $\Sfourthree$. (Indeed, some of these systems validate precisely $\Sfourthree$.) Thus, if we want to ensure non-branching, a good way to do so is to insist that we consider smallest potentialist systems. This insistence is natural with a generative view of classes, as from \cite{Fujimoto2019a} or derived from \cite{Linnebo2006a}.
Having a smallest potentialist system amounts to being able to say: ``This is the minimal amount needed to realise these generative conditions''. So, for example, Theorem~\ref{thm:smallest-fujimoto} gives that there is a smallest potentialist system which corresponds to a modal reworking of Fujimoto's liberal predicativism. 
Similarly, Theorem~\ref{thm:extendedsmallest}, with the length $\Lambda = \Ord$, says that there is a smallest potentialist system corresponding to Linnebo's property theory.\footnote{We thank
\O{}ystein Linnebo for emphasising this point.}

How well does the insistence on smallest systems mesh with the two views? On the one hand, the predicativist who is only interested in adding truth predicates may have some motivation to take this position. (The case where other predicates are allowed significantly complicates things for them, and we consider this situation below when discussing global choice.) Whilst it is somewhat contingent upon the nature of the space of possible language expansions, it seems reasonable to assume that when we introduce individual new truth predicates into our language we do not thereby introduce further predicates beyond what is required by (i.e. definable in) the expansion. In this case, one clearly obtains the smallest such system any time one introduces a new truth predicate.

For property theories, we can simply note that the generation of properties is (by construction) limited to entities definable in a specific way. The new properties available at each additional stage are those whose application relation is definable over previous stages. Roberts \cite{RobertsProperties,RobertsTruth} has analysed these systems as corresponding to what classes are definable from longer and longer iterated truth predicates. Formulated in the potentialist perspective, this is looking at the smallest truth potentialist systems constructed in \S\ref{truth-potentialism}, where the lengths of iterated truth predicates one allows may depend on other philosophical commitments. 

{\bf Wider systems can admit branching.} A critical point, in contradistinction to the foregoing, is that for larger systems we \textit{do} get branching over a countable transitive model. If a certain kind of richness is needed or wanted by the bottom-up theorist, and in particular if they wish to transcend smallest systems, we often can get branching in those systems. If one thinks that branching is a cost (say because it indicates a kind of non-inevitability in how the classes unfold), then such a richness assumption seems like a dangerous desideratum.

The systems we considered can be seen as larger in \emph{width}, not larger in \emph{height}. Whilst our intention here is to bring to mind the familiar width versus height distinction for sets, the notion is different here, since all classes have the same height in the sense of ordinal rank. Here, height refers to the lengths of truth predicates. The wider systems we considered were those that allowed the addition of generic global well-orders (equivalently, Cohen-generic classes of ordinals). Genericity ensures that adding these does not increase the height of a world.
If you drop the requirement for genericity then global well-orders can add height. Given $(M,\Xcal) \models \GBC$, you can find a global well-order in $\Xcal$ which codes any given $A \in \Xcal$, say by placing $A$ on the even indices in the order. So you could add, over the definable classes, a global well-order which codes a very long truth predicate, or any other arbitrarily complicated class.

One might view these two observations (concerning smallest and larger systems) as a point in favour of the pictures articulated by the versions of bottom-up truth-theoretic potentialism  we have considered here. If one views branching as a cost, one way to ensure branching is avoided is to consider smallest potentialist systems. As it turns out, this is precisely what the truth-theoretic versions of liberal predicativism and property theory motivate (since they just involve adding truth predicates and closing under definability). Thus for these views there is conformity between desirable properties of the potentialist system and the details of what the relevant philosophical view motivates. As we shall see, however, allowing class forcing greatly complicates the issue.

{\bf Global choice and class forcing are problematic.} A theme in some of our results is that global choice is problematic (or at least raises several questions) in the class-theoretic potentialist context. One possibility is that we could require the global well-order to be there from the start. For many of our potentialists (e.g. the predicativist and the property theorist) the base world contains just the definable classes. To insist then that the base world contains a global well-order is just equivalent to the base world having a first-order definable well-order of the universe. This has serious first-order consequences, in particular it is equivalent to $\exists x\ V=\HOD(\{x\})$. (See the discussion in \S\ref{truth-potentialism}.) We might be suspicious of our class-theoretic commitments delivering such strong set-theoretic consequences, especially given that we are thinking of building the classes \textit{after} (in the class-theoretic potentialist's modal sense) the universe of sets has been constructed.

Another possibility is that the global well-order is generic, in the sense of class forcing. (See the discussion in \S\ref{truth-potentialism} of how to force to add a global well-order.) A potentialist might not want a global well-order which codes complicated undefinable classes, and instead want it to be ``random'' with respect to the definable classes. This amounts to asking it to be generic; extending by a generic global well-order is adding the well-order and, through the use of forcing-names, closing off under definability from the well-order and classes in the ground model. This has the interesting feature that such a well-order cannot (unlike truth predicates) be viewed as the introduction of a \textit{unique} class with a certain property---the introduction of one Cohen-generic necessitates the addition of many. However, it can still be viewed a individuation of a certain kind---we ask for a generic object and then close under definability. We leave it open whether such `indeterminate individuation' should be acceptable to the kinds of bottom-up theorist we've considered.\footnote{This is not unique to the class forcing case---it applies to set-forcing too (e.g. in the addition of a single Cohen real over a model of $\mathsf{ZFC}$).} 

If we are allowed to add such a generic well-order then, as we noted in Theorem \ref{thm:killing-truth}, there is no \textit{prima facie} guarantee that the generic not be a bad one which kills off the addition of truth predicates. One response to this predicament is to require worlds to satisfy a theory which ensures the existence of all desired truth predicates. For example, if our first world satisfies $\GB+\ETR$ then all the truth predicates are already there, and so a bad `truth-killing' well-order cannot also be there. This, however, incurs the cost that the truth-theoretic potentialism is essentially trivial---all paramaterised truth predicates are there from the get-go. This contravenes the basic set-up of the liberal predicativist and property theorist and so would necessitate some revision of these positions.

If the global well-order is to be neither definable nor generic, then what is it to be? It would be overly hasty to claim those as the only two possibilities, but the other possibilities of which we know strike us as artificial.\footnote{See the earlier point that any class can be coded into a global well-order.}
The explication of further possibilities would be a useful contribution to our understanding of classes, and we leave it to further work.

And of course, one possibility available to the class potentialist is to simply give up on having a global well-order. One might think this is natural for those bottom-up approaches based on liberalised forms of definability. Choice principles for sets draw motivation from a combinatorial view of sets, so if one has a non-combinatorial view of classes one might expect to not have choice principles for classes.\footnote{We thank
\O{}ystein Linnebo for pressing us on this point.} 
Interestingly, for some of the bottom-up approaches we consider, this is not so immediate.

Let's start by noting that a class potentialist who wishes to prohibit global well-orders pays a mathematical cost. Namely, their class theory is unable to directly support the mathematical work which requires global choice. For example:

\begin{enumerate}
\item In the study of determinacy of class games, it plays an essential role in moving from quasi-strategies to strategies. For example, the equivalence of $\ETR$ and clopen class determinacy \cite{gitman-hamkins2016} requires global choice.

\item The standard arguments to prove some properties of the surreal numbers, such as them forming a universal ordered field, go through global choice.\footnote{See \cite[\S9]{ehrlich2012}. Note however that it remains an open question whether global choice is necessary for this result; see the discussion on this MathOverflow question \cite{MO:hamkins2017}.}
\end{enumerate}

On the one hand, these examples are somewhat niche when compared to the large number of equivalences with standard (set) $\AC$ and many fundamental statements of mathematics. 
But on the other hand, the equivalences with $\AC$ also rely on creatures of the higher infinite. For example, $\AC$ is equivalent over $\ZF$ to the assertion that every vector space has a basis \cite{blass1984}. But the vector spaces used in this equivalence are not the familiar ones from undergraduate linear algebra, and are necessarily quite large in cardinality. It is consistent that $\AC$ fails and yet every vector space in, say, $V_{\omega + \omega}$ has a basis. It strikes us as artificial to be sceptical of uses of choice at the level of classes but not be sceptical of uses at the level of sets of high von Neumann rank, though we acknowledge that there is more debate to be had.
 
One might think that the ability to have some worlds where we have the required classes to nicely interpret the reasoning of set theorists is, \textit{ceteris paribus}, a plus. The specific case of liberalised predicativism provides an example here---part of what is at issue for them is to provide an account of classes that is predicativist in spirit, but nonetheless yields enough strength to be able to interpret parts of set theory that use non-definable classes. Fujimoto, for example, wants to ``provide a mathematical framework in which\dots widely accepted and/or mathematically fruitful uses of classes can be meaningfully expressed and implemented'' \cite[p. 217]{Fujimoto2019a}. 
Given that there are mathematical uses for global choice, 
it thus seems that there's some pressure to accept the possible existence of global well-orders for Fujimoto. Insofar as other approaches have similar naturalistic assumptions, they're also subject to similar pressures.\footnote{Of course one could always reject the use of such classes as `legitimate' parts of mathematics, our only contention is that it is a cost to do so.}

Given this naturalistic pressure, it's sensible to consider how the possible existence of global well-orders meshes with the philosophical positions we've considered. For the liberal predicativist, we should note that the existence of a global well-order is an assertion about the existence of a single class and does not ascribe any complicated global structure to classes as a whole. Fujimoto writes:

\begin{quote}
Liberal predicativism does well in most cases, when it comes to an axiom asserting something about a specific type of class... In contrast, if an axiom asserts something strong about the entire structure of classes or the totality of classes, liberal predicativism might be faced with a difficulty. \cite[p. 225]{Fujimoto2019a}
\end{quote}

Fujimoto is claiming here that the `bad' principles (from the liberal predicativist perspective) are those that apply global structure to the classes. Such bad examples include impredicative comprehension, where the use of unbounded quantifiers (and their alternations) can result in complex global structure. By contrast, the introduction of a truth predicate (a canonical example of a `good' class for the liberal predicativst) simply involves introducing a single class and then closing under definability. Given this characterisation, there is at least \textit{prima facie} reason for the liberal predicativist to accept the possible existence of a global well-order---it is an assertion about a single class that also meshes well with the liberal predicativist's desire for mathematical freedom. If one wishes to reject the possibility of such classes for the liberal predicativist, reasons should be given as to why. We do not rule out that there may be such principles, but we do not see any obvious candidates. Similar remarks apply to the addition of other class-forcing generics---these are assertions about the introduction of a single class (and then closure under definability), and so seem acceptable from the liberal predicativist perspective.

The situation is different for the property theorist. As noted earlier, their classes are essentially those obtained by iterating the definability predicate---equivalently, looking at what is definable from longer and longer iterated truth predicates. 
The property theorist thus rules out non-convergence by keeping a strict control on the classes that could exist. One might, of course, view this as a cost---especially if one wants to enforce as few restrictions as possible on the classes that one is allowed to form.

\textbf{The choice of model theory affects convergence.} Many of our results showing non-convergence required the use of countable transitive models. For example, Theorem \ref{thm:killing-truth} depended on adding a generic that, once we introduce the relevant truth predicate, encodes a cofinal sequence in the ordinals. In getting this generic, however, we assumed that the ground model is countable---we view the model externally as countable and talk about the ways bits can be encoded into a particular countable sequence. One might object: For many species of class-theoretic potentialist $V$ is uncountable, and so there is no such generic. For example, the class-theoretic potentialists we've presented who are also set-theoretic actualists accept this. In fact, out of the views we've considered it's only some brands of set-theoretic multiversist/potentialist who accept that $V$ can be made countable.

This contrasts sharply with the version of the model theory where we consider potentialist systems over $V_\kappa$ for $\kappa$ inaccessible. For, Observation \ref{bringtogetherkappa} says that the killing truth phenomenon does not arise for such $V_\kappa$---we can always add iterated truth predicates after adding a global well-order. And Proposition \ref{sfourtwokappa} says that there is a natural system obtained by adding a global well-order and then closing off under the addition of iterated truth predicates that validates $\Sfourtwo$. If one thinks that such a $V_\kappa$ is a better model of the universe than a countable transitive model---say because one is a set-theoretic actualist but class-theoretic potentialist---then one can avoid the killing truth phenomenon (and indeed justify $\Sfourtwo$) by adopting the appropriate model theory. But this shows that real care must be taken in providing a model-theoretic set-up; proceeding with a countable transitive model and no restriction on the generics allowed (an oft-made move) will result in (possibly unpleasant) unintended artefacts of the models employed appearing in one's conception of class-theoretic potentialism.

What emerges from this discussion is that there is the following tension at the heart of of bottom-up approaches. If we (i) regard non-convergence as a cost, (ii) want to allow the addition of truth predicates, and (iii) wish to allow unrestricted addition of generics (as with countable transitive models), then we have a problem. The property theorist resolves this issue by rejecting (iii), as does the advocate of the model theory using an inaccessible rank $V_\kappa$ instead of a countable transitive model. This problem is thus a pressing issue (only) for the predicativist who accepts the use of countable transitive models, and we will suggest a different solution via additional modal principles (also rejecting (iii)) in \S\ref{more-modal-principles}. 

\section{The top-down approach} \label{top-down}


In this section we discuss what the mathematical results from \S\ref{truth-potentialism} say about top-down approaches to class potentialism, and in particular the interplay between what is satisfied on these pictures, \textbf{Referential Indeterminacy}, and \textbf{Interrelation of Interpretations}. First let's make clear just what assumptions are needed for the formal results to apply.

Our formalisation for top-down approaches is to fix a model $M \models \ZFC$ and consider a potentialist system consisting of expansions of $M$ to a model of $T$, where $T$ is a class theory. Corresponding to the set multiversist view we consider countable transitive $M$ and all countable $T$-expansions for $M$. Corresponding to set universism we consider $M = V_\kappa$ for $\kappa$ inaccessible and $T$-expansions for $M$ of size $\kappa$.  Results from \S\ref{truth-potentialism} tell us something about these potentialist systems, making some assumptions about $T$ and $M$. Two main tools were used in \S\ref{truth-potentialism}: truth predicates, and class forcing. We need both to be applicable. 

Let's discuss truth predicates first. Proposition~\ref{prop:worldly} tells us that asking to have any world with a truth predicate puts a limitation on the choice of $M$. For the $V_\kappa$ case it is already implied by $\kappa$ being inaccessible and for the countable transitive model case it amounts to requiring that $M$ satisfy a certain (second-order) reflection principle, and such principles are commonly taken to give basic properties of the universe of sets. Accordingly, it has negligible cost to assume $V$ satisfies these properties.

More substantively, these tools do not apply to any choice of $T$. The results in \S\ref{truth-potentialism} were stated in terms of iterated truth predicates. There is a limit to how far this generalises. If $T$ outright proves the existence of iterated truth predicates of any length---that is, if $T$ proves $\ETR$---one cannot have a nontrivial truth potentialist-like system whose worlds are models of $T$. 

Let's now discuss forcing. As discussed in \S\ref{truth-potentialism} adding a Cohen-generic class of ordinals adds a global choice function. This puts a restriction on the worlds allowed---if one thinks that such generics can be added, one cannot hold that Global Choice fails at every world.
The assumption that generics can be added also rules out the inclusion of axioms that limit the classes by definability.\footnote{Here's an illustrative toy example. Let $T$ be $\GB$ plus the assertions that length $n$ iterated truth predicates exist for any finite $n$ and that every class is definable from some $\Tr_n$. Any nontrivial forcing adds a generic not in the ground model and thus not definable from any $\Tr_n$, thus destroying $T$. So the results in \S\ref{truth-potentialism} do not apply to $T$.} 

In sum, what we need from $T$ for the results of \S\ref{truth-potentialism} to apply is: (i) $T$ does not limit the classes by definability, (ii) $T$ does not prove that arbitrary iterated truth predicates exist, (iii) $T$ does not have a principle implying global choice is false. These results illuminate in a concrete way how the indeterminacy of reference underlying top-down potentialism might manifest.


\textbf{Radical branching in the set-theoretic multiverse.} Let's examine the set-theoretic multiversist first. Recall that she regards talk using some class theory $T$ as just more set-theoretic mathematics up for reinterpretation. Fixing some appropriate $M$ in the multiverse, each $M'$ which has $M$ as an element and thinks it's countable will have a conception of what the $T$-class-potentialist system over $M$ looks like (for some reasonable $T$). For $T$ near the level of $\GB$ (and appropriate $M$) we get failures of the $.2$ and $.3$ axioms, indicative of strong branching; see Corollaries~\ref{cor:ctm-top-down-point3fails} and \ref{cor:ctm-top-down-point2fails}.

Some multiversists (e.g. \cite{Hamkins2018a}) distinguish between the extreme branching of $\Sfour$ and the `inevitability' of $\Sfourthree$ as well as the `convergence' implied by $\Sfourtwo$. The argument proceeds from the mirroring theorems for systems containing $\Sfourtwo$---whilst one could use the modal theory, the mirroring theorem guarantees that a modal-free theory can be used. Hamkins argues that theories with $\Sfourtwo$ are thus `quasi-actualist'---practically speaking nothing of substance hangs on whether we think of the universe modally or non-modally. For a view to be \textit{strongly potentialist}, one might think, non-convergent branching possibilities are required (and for this non-convergence to show up in the modal validities). There are many such varieties of multiversist-inspired set-potentialism on offer; rank-extension potentialism (discussed earlier) is one where the modal validities are $\Sfour$. Critically though, known examples of reasonable kinds of potentialism with branching possibilities are limited to non-well-founded models of set theory, potentialisms with only transitive models are generally $\Sfourtwo$ or stronger (e.g. both forcing and countable transitive model potentialism have $\Sfourtwo$ as their modal validities\footnote{See \cite{HamkinsLoewe2008a} and \cite{HamkinsLinnebo2018a}.}). If one then accepts (as many do) that we have an absolute understanding of well-foundedness, and that intended set-theoretic universes are all transitive/well-founded, then one seems to be (currently) bound to `quasi-actualist' potentialist systems satisfying $\Sfourtwo$ and a mirroring theorem. Not so for the multiversist-inspired class-theoretic potentialist. Here, the $\GB$-class-potentialist systems have failures of $\mathsf{.2}$ and $\mathsf{.3}$ and hence no mirroring theorem (using standard technology). These potentialist systems are the first to our knowledge systems of set theory to exhibit non-convergent branching even when we restrict to well-founded models.\footnote{Note that such occur in the context of second-order arithmetic; see \S\ref{ufs-in-classes?}.}

\textbf{Plural indeterminacy and impredicativity.}  For the theorist who holds that plural quantification is indeterminate, the situation is subtle. On the one hand, plural logic (in its standard formulation) contains all impredicative instances of the plural comprehension scheme and indeed the Henkin interpretations for plural logic obtained by \cite{FlorioLinnebo2016a} all satisfy it (they restrict to what they call \textit{faithful} models---those that satisfy every instance of the comprehension scheme). This can then be leveraged to provide an interpretation of $\MK$ class theory (as in \cite{Uzquiano2003a}). This interpretation can be carried through whether or not the range of the plural quantifiers is determinate, if it is indeterminate but nonetheless every legitimate interpretation satisfies the impredicative plural quantification scheme, this impredicativity extends immediately to obtain the impredicative class-theoretic comprehension scheme of $\MK$ within each world. As noted earlier, $\mathsf{MK}$ violates the presuppositions required to make our arguments go through since it trivialises truth-theoretic potentialism by  implying the existence of arbitrarily iterated truth predicates. Thus, our results do not have much to tell the advocate of this kind of top-down class-potentialism. To say more, further results are needed about the $\MK$-class-potentialist system, and we leave this as an open question; see the discussion around Question~\ref{q:mk-s4} for fuller details.

Though one might hold that the indeterminate plural interpretation yields $\MK$ on the basis of the `standard' conception of the logic, this is controversial. As \cite{FlorioLinnebo2016a} note, often  the impredicative plural comprehension scheme is motivated by the assumption that every non-full Henkin semantics for the plural quantifiers is unintended.\footnote{See \cite{FlorioLinnebo2016a} for further references.} 
Such an assumption leads immediately to the impredicative comprehension scheme for plurals and, by extension, classes (see \cite[pp. 76--77]{Uzquiano2003a} and \cite[\S{}3.2]{Lewis1991a}). 


Of course, if we allow plural quantification to be indeterminate then we have an immediate response---an instance of a formula $\phi$ in the plural comprehension schema might be neither true nor false of some sets in virtue of there being some interpretations of the plural variables on which it is true, and other interpretations on which it is false. For example, consider the following sentence:

$$\phi(x) = \text{``} x=x \text{ and the Class Fodor Principle}\text{''}$$

\vspace{0.3cm}

Whether the Class Fodor Principle holds is independent of $\MK$ \cite{GHK2019}.
If we do not assume that quantification is determinate, and there are worlds in which Class Fodor holds and others in which Class Fodor fails, then it is neither the case that no sets satisfy $\phi$ nor is it the case that some sets satisfy $\phi$---in some worlds $\phi$ picks out $V$ and in others it picks out $\emptyset$. 

Nonetheless, the view that Lewis' thought about pluralities is somehow \textit{part of our conception of pluralities}  is tempting. Even if we think the quantification is indeterminate, we might think that \textit{within a world} his intuition should motivate us to accept impredicative plural quantification over that world, yielding $\MK$ locally. The thought then that this reasoning should apply schematically to \textit{every} world---thereby trivialising truth-theoretic potentialism---merits further scrutiny. Here is not the place to adjudicate these difficult issues concerning the relationships between the philosophies of plural quantification and mathematics. However, these observations point to a substantial philosophical issue: There is a critical choice point in the selection of theory for the believer in the indeterminateness of plural quantification who is happy using countable transitive models. Acceptance of the impredicative plural comprehension scheme despite indeterminacy has immediate mathematical ramifications, not just regarding what non-modal statements of class theory are true at every world but also  the nature and validities of the relevant potentialist systems.

\textbf{The underlying ontology affects branching.} Let's assume that one does accept the plural interpretation, but allows that weak theories (like $\mathsf{GB}$) can be thereby motivated. Let's also assume, however, that one is a set-theoretic \textit{actualist}. For such a theorist, we contend, the model theory that uses some inaccessible $V_\kappa$ is much better for representing the class-theoretic potentialism on offer, certainly in comparison to a countable transitive model. As we noted in Corollary~\ref{cor:kappa-top-down-sfourtwo}, the corresponding top-down potentialist system validates $\Sfourtwo$ even if we consider an interpretation with a global well-order added by a Cohen-generic class, we can always consider interpretations that have a truth predicate for this class. So the divergence does not occur as it does for the set-theoretic multiversist who accepts that \textit{any} universe can be made countable. The different model theories suggested by the different underlying motivations thus have a profound influence upon the nature of the relevant class-theoretic potentialism.

\section{Conclusions and further directions}\label{conclusion}

In this paper we've argued that there are natural interpretations of class talk over a fixed domain of sets that yield \textit{potentialisms} of different kinds. We've also argued that it makes sense to divide these pictures into two kinds: bottom-up approaches begin with some fixed stock of classes and then individuate new ones, and top-down approaches take the modal variation of classes to arise from referential indeterminacy and the ways possible sharpenings of the ranges of the class variables relate to one another. We've proved several results about potentialist systems, in particular exhibiting failures of the $\mathsf{.2}$ and $\mathsf{.3}$ axioms for potentialist systems corresponding to weak theories of classes. Finally, we've discussed some philosophical payoffs of these results for the various bottom-up and top-down approaches.

This is very far from the end of the story, however, and we hope to have merely made a first-step in discussions about class-theoretic potentialism and possible responses to the challenges we have outlined. For this reason, we raise several open questions that may be of interest to others wishing to pursue this line of research. 

\subsection{Additional modal principles?} \label{more-modal-principles}

As discussed in \S\ref{top-down}, the failure of $\mathsf{.2}$ for top-down potentialism for weak class theories is particularly destructive, being witnessed by a world which cannot be further extended to add in a certain truth predicate. A top-down potentialist may very well think this catastrophic world is an artifact of the formalisation, one which does not occur in the real multiverse of classes. Her task then is to explain why this phenomenon does not occur and formulate principles prohibiting these worlds. One way to do so is in the model theory---adopting a model theory that uses inaccessible $V_\kappa$ instead of countable transitive models.

A different (but related) approach is to provide additional \textit{modal} axioms, going beyond just the resources of class theory. For instance, the following modal principle manifestly rules out the killing truth phenomenon:
\[
\necessary \forall X \possible \exists Y (\text{``}Y \text{ is a truth predicate for } X \text{''}).
\]
It is easy to formulate versions of this for iterated truth predicates. And one could consider yet more modal principles to express properties of the true multiverse of classes.

Examples of this kind already exist in the case of the set-forcing potentialist. For example, \emph{maximality principles}, assertions of the form $\possible \necessary \phi \impl \phi$, have been considered in the context of set forcing potentialism; see e.g. \cite{Hamkins2003a} and \cite{HamkinsLinnebo2018a}.
An example of a different flavor, one closer in motivation to what we give here, can be found in
\cite{Steel2014a} (with subsequent development by \cite{MaddyMeadows2020a}). Steel is investigating a multiversist framework arising from set forcing. Given a countable model of set theory, it has pairs of Cohen extensions which do not \emph{amalgamate}---there is no outer model which contains both Cohen extensions as submodels \cite{Mostowski1976a}. To exclude this phenomenon, Steel includes an axiom asserting that models in the generic multiverse always amalgamate. His principle is in fact higher-order, referring to worlds as objects, not just to what is true of sets within each world. And one could also consider higher-order principles in the context of class potentialism.

We leave the consideration of these higher-order or modal principles to future work.

\subsection{An analogy to second-order arithmetic, and universal finite sequences?} \label{ufs-in-classes?}

A potential area for further study concerns the analogy between the use of classes in the contexts of second-order arithmetic and set theory. Predicativism in mathematics often takes the totality of natural numbers as given, with the predicatively-given ``classes'' then being sets of natural numbers, e.g. \cite{FefermanHellman1995a,FefermanHellman2000a}.
There has been work addressing to what extent results about predicativism over $\omega$ generalise to predicativism over $V$---see e.g. \cite{fujimoto2012,sato2014}.
Similar to how it was formalised in the set theoretic context, one could formalise potentialism over $\omega$ by considering potentialist systems of $\omega$-models of second-order arithmetic. 
To what extent does the mathematical and philosophical work about class potentialism carry over to the arithmetic context?

We also wish to mention a question arising from the analogy going in the other direction. Here, $\Zsf_2$ is the theory of second-order arithmetic with full impredicative comprehension and $\Sigma^1_\infty$-$\AC_0$ is the choice schema for definable collections of sets; consult \cite[Chapter VII.6]{simpson:book} for a definition. If $(\omega,\Xcal)$ and $(\omega,\Ycal)$ are models of second-order arithmetic then $\Xcal$ is a \emph{$\beta$-submodel} of $\Ycal$, written $(\omega,\Xcal) \subseteq_\beta (\omega,\Ycal)$, if $\Xcal$ is a subset of $\Ycal$ and the two models agree about which of $\Xcal$'s relations are well-founded.

\begin{theorem}[Hamkins--Williams]
Let $T$ be a computably axiomatizable extension of $\Zsf_2 + \Sigma^1_\infty$-$\AC_0$. Then the modal validities of the potentialist system consisting of countable $\omega$-models of $T$ ordered by $\subseteq_\beta$ are precisely $\Sfour$, whether or not we allow parameters in formulas.
\end{theorem}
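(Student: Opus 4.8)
The plan is to squeeze the modal validities between $\Sfour$ from below and $\Sfour$ from above. The lower bound is automatic, since $\subseteq_\beta$ is reflexive, transitive, and refines the substructure relation, so this genuinely is a potentialist system. The ``whether or not'' clause will then follow by squeezing: allowing parameters can only shrink the collection of validities, so once we show that \emph{even without parameters} nothing beyond $\Sfour$ is valid we get $\Sfour \subseteq (\text{validities with parameters}) \subseteq (\text{validities without parameters}) \subseteq \Sfour$. For the upper bound it therefore suffices, recalling that $\Sfour$ is Kripke-complete for finite rooted trees, to show: for each finite rooted tree $\Fcal$ there is a world and a parameter-free substitution of formulae of second-order arithmetic for the propositional variables, together with an assignment of the nodes of $\Fcal$ to worlds $\supseteq_\beta$ that world which is monotone with respect to the tree order, sends the root to the given world, and realises the forcing relation of $\Fcal$; for then any non-theorem of $\Sfour$, failing at the root of some such tree, is falsified at some world and so is not a validity. (It is enough to do this over a single suitably chosen world; with a little more bookkeeping one does it over every world.)

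The main tool is a \emph{universal finite sequence} for this potentialist system: a parameter-free definable finite binary sequence $u$ such that (a) in any countable $(\omega,\Xcal)\models T$, for every finite binary $s$ extending the current value of $u$ there is a $\beta$-extension $(\omega,\Ycal)\models T$ of $(\omega,\Xcal)$ in which $u=s$, and (b) one can always $\beta$-extend to a model of $T$ in which $u$ is strictly longer. This is the $\omega$-model, $\subseteq_\beta$ analogue of the universal algorithm and universal finite sequence constructions; its definition uses the computable axiomatizability of $T$ to write down a proof predicate, the impredicative comprehension of $\Zsf_2$ to carry out the requisite search and to have the needed truth predicates available, and $\Sigma^1_\infty\text{-}\AC_0$ to manage the choices in the $\beta$-extension step. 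From $u$ one reads off the control statements needed to simulate $\Fcal$: for each fixed $s$, ``$u$ properly extends $s$'' is a button (it cannot be unpushed, since $u$ only grows along $\subseteq_\beta$), providing the stalk and depth structure; and for $n$ outside the current domain of $u$ the statements ``$u(n)=0$'' and ``$u(n)=1$'' are each possible but admit no common $\beta$-extension, providing the branching that separates incomparable successor nodes. Finitely many such gadgets are independent by clause (a). One further observes that ``the current model is not a $\beta$-model'' is itself a button, using that non-$\beta$-ness is inherited along $\subseteq_\beta$ and that (by a routine if slightly fiddly argument, e.g.\ passing to the reals of a suitable ill-founded model of a fragment of set theory) every countable $\omega$-model of $T$ has a $\beta$-extension to a non-$\beta$ model of $T$; this button lets the simulation move first into the region where $u$ is fully active.

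The heart of the matter, and the step I expect to be the main obstacle, is establishing the branching, i.e.\ the non-amalgamation built into clause (a). Since $\Zsf_2$ (let alone $T$) is far too strong for the truth-killing argument of Theorem~\ref{thm:killing-truth} to apply directly, the obstruction must instead come from $\beta$-correctness. First move to a non-$\beta$ world $(\omega,\Xcal')\models T$, so that there is a relation $E\in\Xcal'$ which $\Xcal'$---hence every $\beta$-extension of $\Xcal'$---believes well-founded but which really has an infinite descending chain $g$, with $g\notin\Xcal'$. Now build two $\beta$-extensions $(\omega,\Xcal'[a_0])$ and $(\omega,\Xcal'[a_1])$ of $\Xcal'$ by the interleaved diagonalisation of Mostowski's theorem: each $a_i$ meets every $\Xcal'$-definable dense requirement (so $\Xcal'[a_i]\models T$ is a $\beta$-extension of $\Xcal'$---here one uses the analogue of the fact, used in Theorem~\ref{thm:s42}, that tame forcing preserves $\ETR$, namely that Cohen-type forcing over models of $T$ preserves $T$ and adds no new ill-foundedness of ground-model relations, with $\Sigma^1_\infty\text{-}\AC_0$ playing the role of tameness), while the bits of $g$ are coded into the sparse coordinates of $a_0$ reached just after each dense requirement is met, and the positions of those coordinates are coded into $a_1$. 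Then $a_0\oplus a_1$ recovers $g$, so any model containing both $\Xcal'[a_0]$ and $\Xcal'[a_1]$ sees $E$ to be ill-founded and hence is not a $\beta$-extension of $\Xcal'$. Wrapping this non-amalgamation into the universal-sequence framework, and checking that arbitrarily large independent families of the above buttons and branching gadgets are available over every non-$\beta$ world---so that every finite tree can be realised, the ``non-$\beta$'' button being used to grow the root back down to whatever world one started from---yields that nothing beyond $\Sfour$ is valid, completing the argument.
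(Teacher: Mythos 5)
Your overall shape (lower bound trivial; upper bound by realising every finite rooted tree via a universal finite sequence) matches the intended route, but there is a genuine gap at exactly the point you flag as ``the heart of the matter'', and the argument you give there does not fill it. The crux is clause (a): that there is a \emph{definable} sequence $u$ such that over \emph{every} countable $\omega$-model of $T$ (well-founded or not), every finite binary extension of the current value of $u$ is realised in some $\subseteq_\beta$-extension modelling $T$. You assert this object exists and gesture at ingredients (``a proof predicate, comprehension to carry out the search, $\Sigma^1_\infty$-$\AC_0$ to manage the choices''), but that is not a construction: the universal finite sequence is a substantial theorem in its own right, and its extension property is proved by delicate arguments about end-extensions of countable models (in the style of Woodin's universal algorithm, using arithmetized completeness/Barwise-style techniques), not by the forcing-plus-coding considerations you supply. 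What your Mostowski-style argument over a non-$\beta$ world actually proves is a \emph{non-amalgamation} fact: two mutually coded generics over a world with a false well-foundedness belief have no common $\beta$-extension. That yields failure of directedness (hence of $\mathsf{.2}$), but it is far weaker than clause (a), which is an existence/realizability property needed to simulate arbitrary finite trees and pin the validities down to exactly $\Sfour$. Relatedly, your auxiliary claim that every countable $\omega$-model of $T$ has a $\beta$-extension to a non-$\beta$ model of $T$ is of the same nontrivial end-extension-existence character and cannot be waved off as ``routine if slightly fiddly''; without it even your branching gadget is unavailable over $\beta$-model worlds.

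The paper's proof closes this gap not by constructing the sequence in second-order arithmetic but by a reduction: $\Zsf_2 + \Sigma^1_\infty$-$\AC_0$ is bi-interpretable with $\ZFm + V = H_{\omega_1}$, each $\omega$-model has a companion model of this set theory, and $\subseteq_\beta$ between $\omega$-models corresponds to end-extension between companion models. One then cites the Hamkins--Williams universal finite sequence theorem for end-extensional potentialism over countable models of $\ZFm$, which supplies exactly the realizability property you need, and the $\Sfour$ upper bound follows as you describe. So to repair your proposal you should either insert this companion-model reduction and citation in place of your sketched construction, or accept that you are re-proving the Hamkins--Williams theorem, in which case the non-amalgamation argument you give is only a small first step and the real work (the definition of $u$ and the proof that arbitrary finite values are realised in end-extensions/$\beta$-extensions) remains to be done.
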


\begin{proof}[Proof sketch]
It is well-known that $\Zsf_2 + \Sigma^1_\infty$-$\AC_0$ is bi-interpretable with $\ZFm + V = H_{\omega_1}$, the assertion that every set is countable.  Given $(\omega,\Xcal)$ a model of arithmetic call the corresponding model of $\ZFm$ its \emph{companion model}. Such companion models must be well founded beyond $\omega$, and if $(\omega,\Xcal)$ is a $\beta$-submodel of $(\omega,\Ycal)$ then the companion model of $(\omega,\Xcal)$ is end-extended by the companion model of $(\omega,\Ycal)$. 
Consider now the potentialist system consisting of these countable, $\omega$-standard companion models, ordered by end-extension. Up to coding this is the same as considering the potentialist system of countable $\omega$-models of $T$ directly. An instance of 
\sloppy \cite[Theorem 6]{hamkins-williams2021}
yields that this potentialist system admits a universal finite sequence and thus its modal validities are precisely $\Sfour$.\footnote{Theorem 6 is phrased in a general form. See the discussion at the beginning of \S4 for why it applies to extensions of $\ZFm$.}
\end{proof}

Does this theorem generalise to the set theoretic context? More precisely:

\begin{question} \label{q:mk-s4}
Let $T$ be $\MK$ plus Class Bounding and let $M$ be a countable transitive model of $\ZFC$ which has a nontrivial top-down class potentialist system consisting of $T$-expansions for $M$. Does the potentialist system consisting of countable $T$-expansions for $M$ ordered by the substructure relation\footnote{For models of class theory with the same sets every submodel is a $\beta$-submodel for free.} 
have $\Sfour$ as its modal validities?
\end{question}

A positive answer to this question would imply that the fundamental branching phenomenon for top-down potentialism for weak theories also occurs for very strong theories. 

\nocite{Ewald1996b} 
\bibliography{18nabbib18,local}
\bibliographystyle{apalike}

\end{document}